\def\A{\mathbb{A}}
\def\C{\mathbb{C}}
\def\R{\mathbb{R}}
\def\N{\mathbb{N}}
\def\P{\mathbb{P}}
\def\Q{\mathbb{Q}}
\def\K{\mathbb{K}}
\def\rk {{\rm rank}}
\newtheorem{defn}{Definition}%[section]
\newtheorem{notn}[defn]{Notation}
\newtheorem{lemma}[defn]{Lemma}
\newtheorem{proposition}[defn]{Proposition}
\newtheorem{theorem}[defn]{Theorem}
\newtheorem{remark}[defn]{Remark}
\newtheorem{hypothesis}[defn]{Hypothesis}
\newtheorem{algor}[defn]{Algorithm}
\newenvironment{proof}[1]{
  \trivlist \item[\hskip \labelsep{\it #1}]}{\hfill\mbox{$\square$}
  \endtrivlist}
\def\ms {\medskip}
\def\cprime{$'$}
\title{On sign conditions over real multivariate polynomials}
\author{Gabriela Jeronimo$^{\textrm{a,\,c,}}$\thanks{Partially supported
by the following Argentinian research grants: UBACyT X847
(2006-2009), CONICET PIP 5852/05}
\thanks{Partially supported by ANPCYT PICT 2005 17--33018.} \quad Daniel
Perrucci$^{\textrm{a,}\ast}$ \quad Juan
Sabia$^{\textrm{b,\,c,}\ast}$
\\ \ \\
{$^{\textrm{a}}$ {\small Departamento de Matem\'atica, Facultad de
Ciencias
Exactas y Naturales,}}\\
{{\small Universidad de Buenos Aires, Ciudad Universitaria, 1428
Buenos Aires, Argentina}}
\\[2mm]
{$^{\textrm{b}}$ {\small Departamento de Ciencias Exactas, Ciclo B\'asico Com\'un,}}\\
{{\small Universidad de Buenos Aires, Ciudad Universitaria, 1428 Buenos Aires, Argentina}}\\[2mm]
{$^{\textrm{c}}$ {\small CONICET - Argentina }}
 }
\begin{document}
\maketitle

\begin{abstract}
We present a new probabilistic algorithm to find a finite set of
points intersecting the closure of each connected component of the
realization of every sign condition over a family of real
polynomials defining regular hypersurfaces that intersect
transversally. This enables us to show a probabilistic procedure to
list all feasible sign conditions over the polynomials. In addition,
we extend these results to the case of closed sign conditions over
an \emph{arbitrary} family of real multivariate polynomials.
 The complexity bounds for
these procedures improve the known ones.
\end{abstract}

%\tableofcontents

\section{Introduction}

Given polynomials $f_1,\dots, f_m \in \R[x_1,\dots, x_n]$, a sign
condition $\sigma \in \{<,=,>\}^m$, or a closed sign condition
$\sigma \in \{\le,=,\ge\}^m$, is said to be \emph{feasible} if the
system $f_1(x) \sigma_1 0,\dots, f_m(x)\sigma_m 0$ has a solution in
$\R^n$, and the set of its solutions is called the {\it realization}
of $\sigma$. One of the basic problems in computational
semialgebraic geometry is to decide whether a sign condition is
feasible. This problem is a particular case of quantifier
elimination and, on the other hand, many elimination algorithms use
subroutines determining all the feasible sign conditions for a
family of polynomials.

The first elimination algorithms over the reals are due to Tarski
\cite{Tarski51} and Seidenberg \cite{Seidenberg54}, but their
complexities are not elementary recursive. Collins \cite{Collins75}
was the first to obtain a doubly exponential complexity. In
\cite{GriVo88}, Grigor{\cprime}ev and Vorobjov present  an algorithm
with single exponential complexity to decide the consistency of a
system of equalities and inequalities by studying the critical
points of a function in order to obtain a finite set of points
intersecting each connected component of the solution set. This same
idea was used to obtain more efficient quantifier elimination
procedures (see \cite{HeRoSo90}, \cite{Renegar92} and
\cite{BaPoRo96}). A standard technique is to take sums of squares
and introduce infinitesimals to reduce the problem to the study of a
smooth and compact real hypersurface. The specific problem of
consistency for equalities over $\R$ was also treated through the
critical point method afterwards. In \cite{RRS} the non-emptiness of
a real variety defined by a single equation is studied, reducing the
introduction of infinitesimals and, in \cite{AuRoSED02}, an
algorithm with no infinitesimals is given to deal with arbitrary
positive dimensional systems.

Several probabilistic procedures lead to successive complexity
improvements. Using classical polar varieties, in \cite{BGHM97} and
\cite{BGHM01}, the case of a smooth compact variety given by a
regular sequence is tackled within a complexity depending
polynomially on an intrinsic degree of the systems involved and the
input length. To achieve this complexity, straight-line program
encoding of polynomials  and an efficient procedure to solve
polynomial equation systems over the complex numbers (\cite{GHHM+})
are used. The compactness assumption is dropped in \cite{BGHP04} and
\cite{BGHP05}, by introducing  generalized polar varieties. The
non-compact case is also considered in \cite{SafSch03} for a smooth
equidimensional variety defined by a radical ideal by studying
projections over polar varieties, and an extension to the
non-equidimensional situation is presented in \cite{SafTre}.
Finally, \cite{Safey08} deals with sets of the type $\{f>0\}$
through the computation of generalized critical points.

In this paper, we consider the problem of determining all feasible
sign conditions (or closed sign conditions) over a given finite
family of multivariate polynomials. We first present a probabilistic
algorithm that, under certain regularity conditions (see Hypothesis
\ref{la_hipotesis}), obtains a finite set of points intersecting the
closure of each connected component of the realization of every sign
condition over the given polynomials. For families of
\emph{arbitrary} polynomials, we show a probabilistic algorithm that
computes a finite set of points intersecting each connected
component of the realization of every \emph{closed} sign condition.
The input and intermediate computations in our algorithms are
encoded by straight-line programs (see Section \ref{algycomp}). The
output is described by means of geometric resolutions, that is to
say, by univariate rational parametrizations of $0$-dimensional
varieties. In both situations, the output of the algorithm enables
us to determine all the feasible closed sign conditions over the
polynomials by evaluating their signs at the computed points, which
is done by using the techniques in \cite{Canny93}. Moreover, in the
first case, we can determine all feasible sign conditions (for a
treatment of the problem in full generality, see for instance
\cite{BPR93}).

A sketch of our main algorithms is the following. Given $f_1,\dots,
f_m \in \R [x_1,\dots, x_n]$, a generic change of variables prevents
asymptotic behavior  with respect to the projection to the first
coordinate $x_1$ for each connected component $C\subset \R^n$ of
every feasible (closed) sign condition over $f_1,\dots, f_m$: either
$C$ projects onto  $\R$ or its projection is a proper interval whose
endpoints have a nonempty finite fiber in $\overline C$. In the
latter case, points in $\overline C$ are obtained as extremal points
of $x_1$. These extremal points are solutions of particular systems
of polynomial equations which are dealt with by deformation
techniques that enable the computation of geometric resolutions of
finite sets including them. To find points in the components
projecting onto $\R$, the set is intersected with $\{x_1 = p_1\}$
for a particular value $p_1$, and the algorithm continues
recursively.

The following theorem states our main results (see Theorems
\ref{algoritmopuntosencomponentes} and
\ref{algoritmopuntosencomponentescerradas}):

\medskip

\noindent \textbf{Theorem } \emph{Let $\K$ be an effective subfield
of $\R$. There are probabilistic algorithms to perform the following
tasks:
\begin{itemize}
\item Given polynomials $f_1, \dots, f_m \in \K[x_1, \dots,
x_n]$ satisfying Hypothesis \ref{la_hipotesis}, with degrees bounded
by $d\ge 2$ and encoded by a straight-line program of length $L$,
obtain a finite set of points intersecting the closure of each
connected component of the realization of every sign condition over
$f_1,\dots, f_m$ within $O\big(\sum_{s = 1}^{\min\{m, n\}}
\binom{m}{s}\big(\binom{n-1}{s-1}d^{n}\big)^2   L \big)$ operations
in $\K$ up to poly-logarithmic factors.
\item Given \emph{arbitrary} polynomials $f_1, \dots, f_m \in \K[x_1, \dots,
x_n]$, with degrees bounded by an even integer $d$ and encoded by a
straight-line program of length $L$, obtain a finite set of points
intersecting each connected component of the realization of every
closed sign condition over $f_1,\dots, f_m$ within  $O\big(\sum_{s =
1}^{\min\{m, n\}} 2^s\binom{m}{s}\big(\binom{n-1}{s-1}d^{n}\big)^2
(L+d) \big)$ operations in $\K$ up to poly-logarithmic factors.
\end{itemize}}

\medskip

The factor $\binom{n-1}{s-1}d^{n}$ in the complexity estimates is an
upper bound for the bihomogeneous B\'ezout numbers arising from the
Lagrange characterization of critical points of projections
(cf.~\cite{SafTre}). In fact, one of the new tools to achieve the
stated complexity order, which improves the previous ones depending
on the same parameters, is the use of algorithmic deformation
techniques specially designed for bihomogeneous systems (for a
similar approach, see \cite{HJSS}). Up to now, the polynomial
systems used to characterize critical points were handled with
general algorithms solving polynomial equations over the complex
numbers (see, for instance, \cite{ABRW96}, \cite{Rouillier99},
\cite{GLS01} and \cite{Lecerf03}). As taking sums of squares of
polynomials and introducing infinitesimals lead to an artificial
growth of the parameters involved in the complexity estimates,
another important feature of our techniques is that we work directly
with the input equations instead of using these constructions.

This work can be seen as an extension of \cite{SafSch03} and of
\cite{BGHP05} in the sense that we deal not only with equations but
also with inequalities. In particular, the algorithm  in
\cite{SafSch03}, which only works for the  case of smooth
equidimensional varieties defined by a radical ideal, considers a
family of equation systems equivalent to the ones introduced in the
recursive stages of our algorithm. However, those systems involve a
large number of polynomials and do not have any evident structure.

We also prove that our deformation based approach can be applied to
deal with sign conditions over bivariate systems without any
assumption on the polynomials. We expect this can be extended to
general multivariate polynomials. This is the subject of our current
research. Finally, we adapt our techniques to the case of an
arbitrary multivariate polynomial.

This paper is organized as follows: In Section \ref{preliminares} we
introduce some basic notions and notation that will be used
throughout the paper. Section \ref{preparacion} is devoted to
presenting the basic ingredients to be used in the design of our
algorithms. In Section \ref{seccion_hipotesis}, we present our main
algorithms to determine all feasible sign conditions over polynomial
families satisfying regularity assumptions. In Section
\ref{sec:closed} we consider the same problem for closed sign
conditions over arbitrary multivariate polynomials. The last section
contains our results on sign conditions over bivariate polynomial
families and over a single multivariate polynomial.

\section{Preliminaries}\label{preliminares}

\subsection{Notation}\label{notation}

Throughout this paper $\mathbb{Q}$, $\mathbb{R}$ and $\mathbb{C}$
denote the fields of rational, real and complex numbers
respectively, $\mathbb{N}$ denotes the set of positive integers and
$\mathbb{N}_0:= \mathbb{N} \cup \{ 0 \}$. If $k$ is a field, $\bar
k$ will denote an algebraic closure of $k$.

For $n\in \mathbb{N}$ and an algebraically closed field $k$, we
denote by $\mathbb{A}^n_k$ and $\mathbb{P}^n_k$ (or simply by
$\mathbb{A}^n$ or $\mathbb{P}^n$ if the base field is clear from the
context) the $n$-dimensional affine space and projective space over
$k$ respectively, equipped with their Zariski topologies. For a
subset $X$ of one of these spaces, we will denote by $\overline X$
its closure. We adopt the usual notions of dimension and degree of
an algebraic variety $V$ (see for instance \cite{Shafarevich} and
\cite{Heintz83}).

We will denote projections on certain coordinates $x$ by $\pi_x$.
For short, a projection on the $k$th coordinate will also be denoted
by $\pi_k$.

For any non-empty set $A \subset \R^n$, $\overline A$ will denote
its closure with respect to the usual Euclidean topology. We define
$Z_{\inf}(A) = \{(x_1, \dots, x_n) \in \overline{A} \mid x_1 = \inf
\pi_1(A)\}$ if $\pi_1(A)$ is bounded from below, and $Z_{\inf}(A) =
\emptyset$ otherwise. Similarly, $Z_{\sup}(A) = \{(x_1, \dots, x_n)
\in \overline{A} \mid x_1 = \sup \pi_1(A)\}$ if $\pi_1(A)$ is
bounded from above, and $Z_{\sup}(A)= \emptyset$ otherwise. Finally,
we denote $Z(A)  = Z_{\inf}(A) \cup Z_{\sup}(A)$.

Throughout this paper, $\log$ will denote logarithm to the base $2$.

\subsection{Algorithms and complexity}\label{algycomp}

The algorithms we consider in this paper are described by arithmetic
networks over an effective base field $\mathbb{K}\subset \R$ (see
\cite{vzg86}). The  notion of {\em complexity} of an algorithm we
consider is the number of operations and comparisons over
$\mathbb{K}$.

The objects we deal with are polynomials with coefficients in
$\mathbb{K}$. Throughout our algorithms we represent each polynomial
either as the array of all its coefficients in a pre-fixed order of
its monomials ({\it dense form}) or by a {\it straight-line
program}. Roughly speaking, a straight-line program (or slp, for
short) over $\mathbb{K}$ encoding a list of polynomials in
$\mathbb{K}[x_1,\dots, x_n]$ is a program without branches (an
arithmetic circuit) which enables us to evaluate these polynomials
at any given point in $\mathbb{K}^n$. The number of instructions in
the program is called the {\em length} of the slp (for a precise
definition  we refer to \cite[Definition 4.2]{Burgisser}; see also
\cite{HS82}).

We will do operations with polynomials encoded in both these ways.
To estimate the complexities we will use the following results:
Operations  between univariate polynomials with coefficients in a
field $\K$ of degree bounded by $d$ in dense form can be done using
$O(d \log (d) \log \log (d))$ operations in $\K$ (see \cite[Chapter
8]{vzG}). From an slp of length $L$ encoding a polynomial $f \in
\K[x_1, \dots, x_n]$, we can compute an slp with length $O(L)$
encoding $f$ and all its first order partial derivatives (see
\cite{BS83}).

\subsection{Geometric resolutions}\label{geometricresolutions}

A way of representing zero-dimensional affine varieties which is
widely used in computer algebra nowadays is  a \emph{geometric
resolution}. This notion was first introduced in the works of
Kronecker and K{\"o}nig in the last years of the XIXth century
(\cite{Kronecker} and \cite{Koenig}) and appears in the literature
under different names (rational univariate representation, shape
lemma, etc.). For a detailed historical account on its application
in the algorithmic framework, we refer the reader to \cite{GLS01}.
The precise definition we are going to use is the following:

Let $k$ be a field of characteristic $0$ and $V = \{ \xi^{(1)},
\dots,\xi^{(D)} \} \subset \A_{\bar k}^n$ be a zero-dimensional
variety defined by polynomials in $k[x_1, \dots, x_n]$. Given a
\emph{separating} linear form $\ell = u_1 x_1 + \dots + u_n x_n\in
k[x_1, \dots, x_n]$ for $V$ (that is, a linear form $\ell$ such that
$\ell (\xi^{(i )})\ne \ell(\xi^{(j)})$ if $i \ne j$), the following
polynomials completely characterize the variety $V$:
\begin{itemize}
\item the \emph{minimal polynomial} $q:= \prod_{1 \le i \le D} (U
- \ell(\xi^{(i)}))\in k[U]$ of $\ell$ over the variety $V$ (where
$U$ is a new variable), \item a polynomial $\tilde q\in k[U]$ with
$\deg(\tilde q) <D$ and relatively prime to $q$, \item polynomials
$w_1,\dots, w_n \in k[U]$ with $\deg( w_j )< D$ for every $1\le j
\le n$ satisfying
$$ V = \big\{ \big(\frac{w_1}{\tilde q}(\eta) ,\dots, \frac{w_n}{\tilde q} (\eta) \big) \in
\overline{k}^n \mid \eta \in \overline{k} ,\ q(\eta) = 0 \big\}.$$
\end{itemize}
The family of univariate polynomials $q, \tilde q, w_1,\dots, w_n\in
k[U]$ is called a \emph{geometric resolution} of $V$ (associated
with the linear form $\ell$).

We point out that the polynomial $\tilde q$ appearing in the above
definition is invertible in $k[U]/(q(U))$. Setting $v_k(U):= \tilde
q^{-1}(U) w_k(U) \mod (q(U))$ for every $1\le k \le n$, we are lead
to the standard notion of geometric resolution: a family of $n+1$
polynomials $q, v_1, \dots, v_n$ in $k[U]$  satisfying $V = \big\{
\big(v_1(\eta) ,\dots, v_n(\eta) \big) \in \overline{k}^n \mid \eta
\in \overline{k} ,\ q(\eta) = 0 \big\}$. We will use both
definitions alternatively, since the complexity of passing from one
representation to the other does not modify the overall complexity
of our algorithms. Which notion is used in each case will be clear
from the number of polynomials.

\section{General approach}\label{preparacion}

\subsection{Avoiding asymptotic situations}\label{cambios_de_variable_genericos}

  For  any non-empty set $A \subset
\R^n$ we define $Z_{\inf}(A,k) =  \{(x_1, \dots, x_n) \in
\overline{A} \ | \ x_{k} = \inf \pi_k(A)\}$ if $A$ is bounded from
below and $Z_{\inf}(A,k) = \emptyset$ otherwise. Similarly,
$Z_{\sup}(A,k) = \{ (x_1, \dots, x_n) \in \overline{A} \ | \ x_{k} =
\sup \pi_k(A)\}$ whenever $A$ is bounded from above and
$Z_{\sup}(A,k) = \emptyset$ otherwise. Finally, $Z(A,k)=
Z_{\inf}(A,k) \cup Z_{\sup}(A,k)$. In particular, when $k =1$,
$Z_{\inf}(A, 1)$, $Z_{\sup}(A, 1)$ and $Z(A, 1)$ will be denoted by
$Z_{\inf}(A)$, $Z_{\sup}(A)$ and $Z(A)$ respectively as has already
been stated in Section \ref{notation}.

The precise conditions achieved by a generic linear change of
variables are stated in the following:

\begin{proposition} \label{cambio_var_gen_finit_extrem}
Let $f_1, \dots, f_m $ be $n$-variate polynomials with real
coefficients. After a generic linear change of variables over $\Q$,
for every  semialgebraic set $\mathcal{P}$ defined in $\R^n$ by a
Boolean formula on the polynomials $f_1,\dots, f_m$  involving
equalities and inequalities to zero and every $p=(p_1, \dots,
p_{n})$ $\in \R^{n}$, if $1 \le k \le n$ and $C$ is a connected
component of $\mathcal{P} \cap \{x_1 = p_1, \dots,
x_{k-1}=p_{k-1}\}$, then $Z(C,k)$ is a finite set (possibly empty).
Moreover, if $\pi_k(C) $ is bounded from below, then $Z_{\inf}(C,k)$
is not empty, and, if $\pi_k(C)$ is bounded from above, then
$Z_{\sup}(C,k)$ is not empty.
\end{proposition}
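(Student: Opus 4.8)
The plan is to reduce the statement to a genericity property of the linear change of variables that can be checked on finitely many algebraic conditions, and then to handle the "moreover" part by a compactness/semialgebraic continuity argument. First I would observe that it suffices to treat a single fixed Boolean combination $\mathcal{P}$: there are finitely many sign conditions $\sigma\in\{<,=,>\}^m$ (hence finitely many realizations), and every semialgebraic set defined by a Boolean formula on $f_1,\dots,f_m$ is a finite union of such realizations and of their lower-dimensional strata; since $Z(C,k)$ for a connected component of $\mathcal{P}\cap\{x_1=p_1,\dots,x_{k-1}=p_{k-1}\}$ is controlled by the Zariski closures of these strata, it is enough to bound, uniformly over all $p$ and all $k$, the set of extremal points of $\pi_k$ restricted to the relevant pieces. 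The key reduction is that $Z_{\inf}(C,k)$ and $Z_{\sup}(C,k)$ are contained in the set of points of $\overline{C}$ at which $x_k$ attains a local extremum along $\overline{\mathcal{P}}\cap\{x_1=p_1,\dots,x_{k-1}=p_{k-1}\}$; such points lie on the critical locus of $\pi_k$ restricted to the smooth strata of the real variety cut out by the vanishing $f_i$'s, together with the singular locus. After a generic linear change of variables (a Noether-normalization–type position), each of these critical/singular loci, intersected with a generic linear space $\{x_1=p_1,\dots,x_{k-1}=p_{k-1}\}$, becomes zero-dimensional — this is the finiteness assertion.

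To make this precise I would, for each subset $J\subseteq\{1,\dots,m\}$, consider the variety $V_J=\{f_j=0:j\in J\}$ and the projection $\pi_k$; by the classical theory of polar varieties and by generic-coordinates arguments (as in the cited works, e.g.\ \cite{SafSch03} and \cite{SafTre}), after a generic $\mathrm{GL}_n(\Q)$ change of coordinates the restriction of $\pi_k$ to the smooth locus of $V_J$ has, for every generic choice of the first $k-1$ coordinates, only finitely many critical points, and moreover the number of these is bounded. The set of "bad" linear changes is contained in a proper Zariski-closed subset of $\mathrm{GL}_n$, defined over $\Q$, obtained as the union over all $J$ and all $k$ of the discriminant-type loci where some fiber of a polar variety fails to be zero-dimensional; a generic change of variables avoids all of them simultaneously. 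Here the quantifier over $p\in\R^n$ is handled by noting that "$Z(C,k)$ is finite for all $p$" is equivalent, after the change of variables, to the projection $V_J\to\A^{k-1}$ (on the first $k-1$ coordinates) restricted to the appropriate polar variety being a finite morphism, which is an algebraic condition on the change of variables alone, independent of $p$.

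For the "moreover" statement, suppose $\pi_k(C)$ is bounded from below by $\alpha=\inf\pi_k(C)$. Pick a sequence $x^{(\nu)}\in C$ with $x^{(\nu)}_k\to\alpha$; since $C\subset\mathcal{P}\cap\{x_1=p_1,\dots,x_{k-1}=p_{k-1}\}$ I would first argue that, after the generic change of variables, $C$ cannot "escape to infinity" in the remaining coordinates while $x_k$ stays near $\alpha$ — this is exactly the asymptotic behavior the change of variables is designed to prevent, and it follows from a properness statement for the projection $\pi_{(x_1,\dots,x_k)}$ restricted to the real trace of each $V_J$, in generic coordinates. Consequently the sequence $x^{(\nu)}$ has a bounded subsequence converging to some $x^\ast\in\overline{C}$ with $x^\ast_k=\alpha$, so $x^\ast\in Z_{\inf}(C,k)$, which is therefore nonempty; the $\sup$ case is symmetric. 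I expect the main obstacle to be precisely this non-escape-to-infinity argument: one must show that the generic linear change of variables can be chosen so that, simultaneously for every $J$, every $k$, and every value of the frozen coordinates $p_1,\dots,p_{k-1}$, the real semialgebraic set under consideration has the property that boundedness of $\pi_k$ forces boundedness of the whole component. Packaging this as a finite list of algebraic non-degeneracy conditions on the change of variables — so that "generic" is justified — is the technical heart of the proof, and is where the structural input from generalized polar varieties and Noether position is used.
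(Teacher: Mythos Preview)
Your approach is genuinely different from the paper's. The paper does \emph{not} use polar varieties or critical-point theory here; instead it proves an auxiliary lemma based on cylindrical algebraic decomposition. Concretely, the authors build, from $f_1,\dots,f_m$, a triangular family $\{f_{ij}\}_{1\le i\le n,\,1\le j\le l_i}$ by iterating resultants/subresultants in the last variable and closing under derivation, exactly as in the CAD construction. The generic linear change of variables is used only to force every $f_{ij}$ to be quasi-monic in $x_i$; this is a finite list of nonvanishing conditions on the entries of the change-of-basis matrix. With that family in hand, finiteness and nonemptiness of $Z(C,k)$ are proved simultaneously by induction on $i$, using the slicing functions $\xi_{s,a}$ of the CAD and the fact that they extend continuously to the closure of each cell. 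Thus the paper's ``moreover'' clause comes for free from the CAD structure and the curve-selection lemma, rather than from any properness of a projection on a variety.

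Your outline is plausible for the finiteness assertion, but the nonemptiness argument has a real gap. You invoke ``properness of $\pi_{(x_1,\dots,x_k)}$ restricted to the real trace of each $V_J$'' to rule out escape to infinity of a sequence $x^{(\nu)}\in C$ with $x^{(\nu)}_k\to\alpha$. The difficulty is that $C$ is an \emph{open} semialgebraic set and need not be contained in, nor accumulate on, any $V_J$ along such a sequence: a point of $C$ satisfies strict inequalities $f_i>0$ or $f_i<0$ for the indices outside $J$, and an unbounded sequence in $C$ can perfectly well stay in the interior, far from every boundary hypersurface, while $x_k$ approaches $\alpha$. Properness of the projection on $\bigcup_J V_J$ therefore says nothing about such a sequence. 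To salvage your route you would have to control the asymptotic directions of the \emph{sign-condition cells themselves} (equivalently, the behavior at infinity of the leading forms of all the $f_i$ simultaneously) and show that a generic rotation destroys every asymptote parallel to a coordinate hyperplane; this is a statement about semialgebraic sets, not about the algebraic sets $V_J$, and it does not reduce to Noether position of the polar varieties. The paper sidesteps this entirely: because the CAD family is quasi-monic, each cylinder over a base cell is sliced by finitely many continuous graphs, so a sequence in $C$ with bounded $x_k$ and fixed $x_1,\dots,x_{k-1}$ is automatically trapped between two such graphs and hence bounded, and the limit point is produced explicitly by extending the slicing functions. If you want to keep the polar-variety viewpoint, you will need an independent argument of this type for the nonemptiness part.
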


To prove Proposition \ref{cambio_var_gen_finit_extrem}, we will use
the following auxiliary lemma:

\begin{lemma} \label{fin_punt_mas_izq}
Let $\{f_{ij}\}_{1 \le i \le n, 1 \le j \le l_i} \subset \R[x_1,
\dots, x_n]$ be a family of nonzero polynomials satisfying
simultaneously:
\begin{enumerate}

\item[a)] for $1 \le i \le n$, $\{f_{ij}\}_{1 \le j \le l_i}$ is
contained in $\R[x_1, \dots, x_i]$, it is closed under derivation
with respect to the variable $x_i$, and every polynomial in it is
quasi-monic (that is, monic up to a constant) with respect to $x_i$,

\item[b)] for $1 < i \le n$, $\{f_{(i-1)j}\}_{1 \le j \le
l_{i-1}}$ slices $\{f_{ij}\}_{1 \le j \le l_i}$ in the sense of
\cite[Definition 2.3.4]{BCR}.
\end{enumerate}
Let $p = (p_1, \dots, p_{n}) \in \R^{n}$, $1 \le i \le n$, and
$\mathcal{P}\subset \R^i$ be a semialgebraic set defined by a
Boolean formula on the polynomials $f_{ij}$, $1\le j \le l_i$,
involving equalities and inequalities to zero. For  $1\le k \le i$,
let $C$ be a connected component of $\mathcal{P} \cap \{x_1 = p_1,
\dots, x_{k-1}=p_{k-1}\}$. Then the set $Z(C,k)$ is finite (possibly
empty). Moreover, if $\pi_k(C) $ is bounded from below, then
$Z_{\inf}(C,k) \ne \emptyset,$ and, if $\pi_k(C)$ is bounded from
above, then $Z_{\sup}(C,k)\ne \emptyset.$
\end{lemma}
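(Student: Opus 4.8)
The plan is to prove the statement by induction on $i$, exploiting conditions (a) and (b) to control how the polynomials behave along the fibers of $\pi_k$. First I would treat the base case $i=1$: here $\mathcal{P}\subset\R$ is defined by a Boolean combination of conditions on the $f_{1j}\in\R[x_1]$, so $\mathcal{P}$ is a finite union of points and open intervals; any connected component $C$ is a point or an interval, $Z(C,1)$ consists of at most the two (finite) endpoints, and if $\pi_1(C)=C$ is bounded below (resp.\ above) then the infimum (resp.\ supremum) is actually attained in $\overline C$ because an endpoint of such an interval is a root of one of the $f_{1j}$ and $\overline C$ contains it. Note also the case $k<i$ is subsumed once we fix $x_1=p_1,\dots,x_{k-1}=p_{k-1}$: substituting these values, we are looking at the slice, and the key point is that the family $\{f_{ij}\}$ restricted to such a slice, re-indexed in the variables $x_k,\dots,x_i$, still satisfies hypotheses (a) and (b) — this is exactly what "slicing" in the sense of \cite[Definition 2.3.4]{BCR} is designed to guarantee (closure under $x_i$-derivation is preserved trivially, quasi-monicity in $x_i$ is preserved, and the slicing relations persist because $\{f_{(i-1)j}\}$ slices $\{f_{ij}\}$). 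So without loss of generality I may assume $k=1$.

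Next, for the inductive step with $k=1$, I would use the projection $\pi_1\colon\R^i\to\R$ together with the projection $\rho\colon\R^i\to\R^{i-1}$ onto the first $i-1$ coordinates. The slicing hypothesis (b) ensures that over each connected component of $\R^{i-1}\setminus\{\text{zero sets of the }f_{(i-1)j}\}$, the roots in $x_i$ of the polynomials $f_{ij}$ vary continuously (real root functions) and do not collide or escape to infinity (quasi-monicity in $x_i$). Consequently the semialgebraic set $\mathcal{P}$, over each such base cell, is a union of "bands" between consecutive root functions, and a connected component $C$ of $\mathcal{P}$ projects under $\rho$ onto a connected semialgebraic subset $\rho(C)$ of $\R^{i-1}$ that is itself (contained in) a connected component of a set defined by a Boolean formula on the $f_{(i-1)j}$. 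I would then argue that $\pi_1(C)$ is bounded below exactly when $\pi_1(\rho(C))$ is, invoke the inductive hypothesis on $\rho(C)$ (a subset of $\R^{i-1}$) to get that $Z_{\inf}(\rho(C),1)$ is a nonempty finite set, and lift: a point of $\overline C$ with minimal $x_1$-coordinate lies over a point of $\overline{\rho(C)}$ with minimal $x_1$-coordinate, and the fiber of $\overline C$ over such a point is cut out by the root functions $f_{ij}$, hence is finite (at most $\sum_j\deg_{x_i}f_{ij}$ points) and nonempty. Taking the union over the finitely many base cells meeting $\rho(C)$ gives that $Z_{\inf}(C,1)$ is finite and nonempty; the same argument applies to $Z_{\sup}$, and hence to $Z(C,1)$.

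The main obstacle I anticipate is the careful bookkeeping in the inductive step: making precise the claim that a connected component $C\subset\R^i$ of $\mathcal{P}$ sits, over the stratification of $\R^{i-1}$ induced by the $f_{(i-1)j}$, as a union of graphs and bands of continuous root functions, and that its image $\rho(C)$ is genuinely a connected component (or contained in one) of a semialgebraic set to which the inductive hypothesis applies — this requires the full strength of the slicing condition, in particular that passing to the slice $x_k=p_k$ at the final lifting stage again produces a sliced family. A secondary subtlety is the non-attainment issue: one must rule out that the infimum of $x_1$ over $C$ is approached only asymptotically along a band that thins to zero width or runs off the boundary of a base cell; quasi-monicity in $x_i$ handles the "escape to infinity in $x_i$" direction, and the inductive control of $\overline{\rho(C)}$ handles the base directions, but stitching these together to conclude that the minimum is attained in $\overline C$ is where the argument must be most delicate. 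Once this geometric picture is in place, the finiteness and nonemptiness assertions follow, and the proof is complete.
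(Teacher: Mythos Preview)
Your plan is correct and matches the paper's proof: reduce to $k=1$ by substituting $p_1,\dots,p_{k-1}$ (the hypotheses are preserved), then induct on $i$ using the projection $\rho:\R^i\to\R^{i-1}$ and the cylindrical decomposition coming from the slicing condition, applying the inductive hypothesis to the connected components of the base cells $A_{s,u}$ contained in $\rho(C)$. For finiteness the paper argues, as you do, that any $w\in Z(C)$ projects into $Z(\rho(C))$ and must lie on $\{f_{ij}=0\}$ for some $j$ (otherwise all signs are constant near $w$, forcing a neighborhood of $w$ into $C$ and contradicting extremality of $w_1$).

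The non-attainment obstacle you flag is exactly the delicate point, and the paper resolves it constructively rather than by a compactness argument: given $z\in Z_{\inf}(\rho(C))$ (from the inductive hypothesis), it invokes the curve selection lemma \cite[Theorem 2.5.5]{BCR} to produce a continuous semialgebraic curve $\gamma:[0,1]\to\overline{A_{s,u}}$ with $\gamma(0)=z$ and $\gamma((0,1])\subset A_{s,u}\subset\rho(C)$, then uses the fact that the root functions $\xi_{s,a}$ extend continuously to $\overline{A_s}$ \cite[Lemma 2.5.6]{BCR} to lift $\gamma$ to a curve $\tilde\gamma$ with $\tilde\gamma((0,1])\subset C$; the endpoint $\tilde\gamma(0)$ then lies in $\overline C$ with first coordinate $z_1$, giving $Z_{\inf}(C)\ne\emptyset$.
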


\begin{proof}{Proof: } As for every
$1 \le k \le n$ the family $\{f_{ij}(p_1, \dots, p_{k-1}, x_k,
\dots, x_n)\}_{k \le i \le n, 1 \le j \le l_i}$ $\subset \R[x_k,
\dots, x_n]$ satisfies the hypotheses,
 it is enough to prove the lemma for  $k = 1$.

For $i = 1$, the result is clear.

Suppose the statement is true for $i-1$. Let $\pi: \R^i \to
\R^{i-1}$ be the projection on the first $i-1$ coordinates.
Following the notation in  \cite[Chapter 2]{BCR}, let $A_1, \dots,
A_\ell$ be the semialgebraic sets giving the slicing of $\R^{i-1}$
with respect to $f_{i1}, \dots, f_{il_i}$ given by the polynomials
$f_{(i-1)1}, \dots, f_{(i-1)l_{i-1}}$ and, for $1 \le s \le \ell$,
let $\xi_{s,1} < \dots < \xi_{s, a_s}:A_s \to \R$ be the continuous
semialgebraic functions that slice $A_s \times \R$. Let $A_{s,1},
\dots, A_{s, u_s}$ be the connected components of  $A_s$.

Note that $C$ is a finite union of some sets of the partitions of
the sets $A_{s,u} \times \R$ given by  $\xi_{s,1}, \dots, \xi_{s,
a_s}$ and  $\pi(C)$ is a finite union of sets $A_{s, u}$. If $\pi(C)
= \cup_h A_{s_h, u_h}$, then $Z(\pi(C)) \subset \cup_h Z(A_{s_h,
u_h})$. Since each $A_{s_h, u_h}$ is a connected component of
$A_{s_h}$, which can be described by a Boolean formula involving
equalities and inequalities to zero of  $f_{(i-1)1}, \dots,
f_{(i-1)l_{i-1}}$,  by inductive hypothesis,  each $Z(A_{s_h, u_h})$
is finite and, therefore,  $Z(\pi(C))$ is finite too.

Let $w  \in Z(C)$. As $Z(C) \subset \overline{C}$, $\pi(w) \in
\pi(\overline{C}) \subset \overline{\pi(C)}$. Moreover, as $\pi_1(C)
= \pi_1(\pi(C))$,   $\pi(w) \in Z(\pi(C))$. On the other hand, as $w
\in Z(C)$, at least one of the quasi-monic polynomials $f_{i1},
\dots, f_{il_i}$ vanishes at $w$ (otherwise, all the $f_{ij}, j= 1,
\dots, l_i,$ would have a constant sign in a neighborhood of $w$
contained in $C$). Therefore,  $Z(C)$ is a finite set since $\pi(w)$
lies in the finite set $Z(\pi(C))$ and $w_i$ is a zero of one of the
nonzero polynomials $f_{i1}(\pi(w), x_i), \dots, f_{il_i}(\pi(w),
x_i)$.

Suppose now that $\pi_1(C)$ is an interval bounded, for example,
from below. Then, by the inductive assumption, there exists $z =
(z_1, \dots, z_{i-1}) \in Z_{\inf}(\pi(C)) \subset
\overline{\pi(C)}$. Assume further that $A_{1,1} \subset \pi(C)$, $z
\in Z_{\inf}(A_{1,1})$ and $\gamma:[0,1] \to \overline{A}_{1,1}$ is
a continuous semialgebraic curve such that $\gamma((0,1]) \subset
A_{1,1}$ and $\gamma(0) = z$ (see \cite[Theorem 2.5.5]{BCR}). Let
$\tilde x=\gamma(1)$. Since $\tilde x \in A_{1,1} \subset \pi(C)$,
there exists $y \in \R$ such that $(\tilde x, y) \in C$. Using
\cite[Lema 2.5.6]{BCR}, each $\xi_{1,a}$ can be extended
continuously to $\overline{A}_1$. Let us denote by $\xi_{1,a}$ also
this extension. Depending on the position of $y$ with respect to the
values $\xi_{1,1}(\tilde x) < \dots < \xi_{1,a_1}(\tilde x)$, it is
easy in any case to define a continuous semialgebraic function
$h:[0, 1] \to \R$ such that the continuous function
$\tilde{\gamma}:[0, 1]\to \R^i$ defined as $\tilde{\gamma}(t) =
(\gamma(t), h(t))$  satisfies $\tilde{\gamma}((0, 1]) \subset C$
(note that the signs of the polynomials $f_{i1}, \dots, f_{il_i}$
are constant over $\tilde{\gamma}((0, 1]))$ and, therefore,  $(z,
h(0)) = \tilde{\gamma}(0) \in \overline{C}$. Moreover, as $z_1 =
\inf \pi_1(\pi(C)) = \inf \pi_1(C)$,  $(z, h(0)) \in Z_{\inf}(C)$.
\end{proof}

Now, we can prove Proposition \ref{cambio_var_gen_finit_extrem}:

\begin{proof}{Proof: } By Lemma \ref{fin_punt_mas_izq},
it suffices to show that there exists a Zarisky open set  ${\cal U}
\subset Gl(n, \C)$ such that, for every $V_0 \in \Q^{n \times n}
\cap {\cal U}$ there exists a family of polynomials $\{f_{ij}\}_{1
\le i \le n, 1 \le j \le l_i} \subset \R[x]$ satisfying the
hypotheses of the lemma, and such that, for  $1 \le j \le m$,
$f_{nj}(x) = f_j(V_0x)$ with $m \le l_n$. Let $V$ be a matrix whose
entries are new variables
 $v_{rs}$, $1 \le r, s
\le n$ and consider  $\{F_{ij}\}_{1 \le i \le n, 1 \le j \le l_i}
\subset \R[v, x]$ defined in the following way:
\begin{itemize}
\item Take $l'_n = m$ and, for $1 \le j \le l'_n$, let
$F_{nj}(V, x) = f_j(Vx)$. Then, for $1 \le j_0 \le l'_n$, if $\deg_x
F_{nj_0} = d_{nj_0}$, add the first $d_{nj_0}-1$ derivatives of
$F_{nj_0}$ with respect to  $x_n$ to the list to obtain
$\{F_{nj}\}_{1\le j \le l_n}$.
\item From  $\{F_{(i_0+1)j}\}_{1 \le j
\le l_{i_0+1}} \subset \R[v, x_1, \dots, x_{i_0+1}]$, first, form
$\{F_{i_0j}\}_{1 \le j \le l'_{i_0}} \subset \R[v, x_1, \dots,
x_{i_0}]$ by taking all possible resultants and subresultants with
respect to the variable $x_{i_0 + 1}$ between pairs of polynomials,
not taking into account the ones that are identically zero. Then,
for $1 \le j_0 \le l'_{i_0}$, if $\deg_x F_{i_0j_0} = d_{i_0j_0}$,
add the first $d_{i_0j_0}-1$ derivatives of $F_{i_0j_0}$ with
respect to the variable $x_{i_0}$ to obtain the family
$\{F_{i_0j}\}_{1\le j \le l_{i_0}}$.
\end{itemize}

Let $1 \le i \le n$ and $1 \le j \le l_i$. Let $d_{ij}:= \deg_x
F_{ij}$  and let $q_{ij}\in \R[v]$ be the coefficient of the
monomial $x_{i}^{d_{ij}}$ in $F_{ij}\in \R[v][x]$. We will show that
$q_{ij}\ne 0 $. To do so, we also prove the following: for $A \in
\Q^{i \times i}$, denote by
$\tilde A$ and $\hat A$ the matrices $ \tilde A = \left(%
\begin{array}{cc}
  A & 0 \\
  0 & 1 \\
\end{array}%
\right) \in \Q^{(i+1) \times (i+1)}$ and $ \hat A = \left(%
\begin{array}{cc}
  A & 0 \\
  0 & Id_{n-i} \\
\end{array}
\right) \in \Q^{n \times n}$; then, for every  $1 \le j \le l'_i$
and every $B \in \Q^{i \times i}$, $F_{ij}(V, Bx) = F_{ij}(V\hat
B,x)$, and, for every $l'_{i}+1 \le j \le l_i$ and  $B \in \Q^{(i-1)
\times (i-1)}$, $F_{ij}(V, \tilde Bx) = F_{ij}(V\hat B,x)$.

 If $i = n$, it is clear that $q_{nj} \ne 0$  for every $1\le j\le l'_n$ because there
 is
 a change of variables $V_0 \in \Q^{n \times n}$
which makes the polynomials $f_{j}$, for $1 \le j \le m$,
quasi-monic of degree $d_{nj}$ with respect to the variable $x_n$.
Moreover, for $1 \le j \le l'_n$ and for $B \in \Q^{n \times n}$,
$F_{nj}(V, Bx) = f_j(VBx) = F_{nj}(VB,x)$. For $l'_n + 1\le j \le
l_n$ and $B \in \Q^{(n-1) \times (n-1)}$, suppose $F_{nj}$ is the
derivative with respect to $x_n$ of $F_{nj'}(V, x)$.  Then, by
differentiating  $F_{nj'}(V, \tilde Bx) = F_{nj'}(V\hat B,x)$ with
respect to $x_n$, we obtain the desired identity.

Let  $i < n$ and $B \in \Q^{i\times i}$. Note that, if  $1 \le j \le
l_{i+1}$ and $$F_{(i+1)j}(V, x) = \sum_{h =
0}^{d_{(i+1)j}}q_{(i+1)j,h}(V, x_1, \dots, x_i)x_{i+1}^h,$$ as
$F_{(i+1)j}(V, \tilde Bx) = F_{(i+1)j}(V\hat B, x)$, then
$q_{(i+1)j,h}(V, Bx) = q_{(i+1)j,h}(V\hat B ,x)$ for $1 \le h\le
d_{(i+1)j}$. Let $1 \le j \le l'_i$, and suppose  $F_{ij}$ is a
resultant or a subresultant between $F_{(i+1)1}$ and $F_{(i+1)2}$.
As $\deg_{x_{i+1}}F_{(i+1)1} = d_{(i+1)1}$ and
$\deg_{x_{i+1}}F_{(i+1)2} =d_{(i+1)2} $, $F_{ij}$ corresponds to the
evaluation of a certain polynomial $R$ in $q_{(i+1)1,h}$ and
$q_{(i+1)2,h}$. Then $F_{ij}(V, Bx) = R(q_{(i+1)1,1}(V, Bx), \dots,
q_{(i+1)2,d_{(i+1)2}}(V, Bx)) = R(q_{(i+1)1,1}(V \hat B, x), \dots,
q_{(i+1)2,d_{(i+1)2}}(V \hat B, x)) = F_{ij}(V \hat B, x).$ When
$l'_i+1 \le j \le l_i$, we may proceed in the same way as we do for
$i = n$. Now, it suffices to prove that $q_{ij}\ne 0 $ for $1 \le j
\le l'_{i}$. Let $B_0 \in \Q^{i \times i}$ such that $\deg
_{x_i}F_{ij}(V, B_0x) =d_{ij}$. As $F_{ij}(V, B_0x) = F_{ij}(V\hat
B_0, x)$, then $q_{ij}(V\hat B_0)\ne 0$ and, therefore $q_{ij}\ne
0$.

Define ${\cal U} = \{V_0 \in \C^{n \times n} \ | \ q_{ij}(V_0) \ne 0
\hbox{ for } 1 \le i \le n, 1 \le j \le l_i\}$. By \cite[Proposition
4.34 and Theorem 5.14]{BPR}, for every $V_0 \in \Q^{n \times n} \cap
{\cal U}$, the set $\{f_{ij}(x)\}_{1 \le i \le n, 1 \le j \le l_i}$
defined by $f_{ij}(x) = F_{ij}(V_0, x)$ satisfies both conditions in
Lemma \ref{fin_punt_mas_izq}.
\end{proof}

The following proposition is a major tool for our algorithms (cf.
\cite[Theorem 2]{SafSch03}).

\begin{proposition}\label{idea_algoritmo} Let $f_1,\dots, f_m$ be $n$-variate polynomials with real coefficients.
After a generic change of variables, for every semialgebraic
 set $\mathcal{P}$  defined in $\R^n$ by a Boolean
formula on $f_1,\dots, f_m$ involving equalities and inequalities to
zero, and every $p=(p_1, \dots, p_{n})\in \R^{n}$, if for $1 \le k
\le n$, $\mathcal{P}(k,p)$  is the set of all the connected
components of $\mathcal{P}\cap \{x_1 = p_1, \dots, x_{k-1} =
p_{k-1}\}$, then
$$
\{p\} \cup \Big( \bigcup_{k=1}^{n} \bigcup_{C \in \mathcal{P}(k,p)}
Z(C,k) \Big)
$$
is finite and intersects the closure of each connected component of
$\mathcal{P}$.
\end{proposition}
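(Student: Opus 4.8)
The plan is to prove Proposition \ref{idea_algoritmo} by combining the finiteness statement of Proposition \ref{cambio_var_gen_finit_extrem} with a descent argument on the number of fixed coordinates. First I would fix a generic linear change of variables so that the conclusions of Proposition \ref{cambio_var_gen_finit_extrem} hold simultaneously for the family $f_1,\dots,f_m$, and observe that after this change every set $\mathcal{P}(k,p)$ consists of finitely many connected components (semialgebraic sets have finitely many connected components), so that the displayed union is a finite union of finite sets $Z(C,k)$, hence finite. The real content is that this finite set meets the closure of every connected component of $\mathcal{P}$.

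For the second part I would argue as follows. Let $C_0$ be a connected component of $\mathcal{P}$; we want to produce a point of the displayed set in $\overline{C_0}$. Consider the projection $\pi_1$ on the first coordinate. If $\pi_1(C_0)=\R$ is unbounded both ways, then in particular $p_1\in\pi_1(C_0)$, so the fiber $C_0\cap\{x_1=p_1\}$ is nonempty; pick a connected component $C_1$ of this fiber (which is one of the components in $\mathcal{P}(2,p)$, since $C_1$ is a connected component of $\mathcal{P}\cap\{x_1=p_1\}$ contained in $C_0$) and recurse, now with the coordinate $x_2$ and the slice $\{x_1=p_1\}$ already imposed. If instead $\pi_1(C_0)$ is bounded from below, then by Proposition \ref{cambio_var_gen_finit_extrem} the set $Z_{\inf}(C_0,1)=Z_{\inf}(C_0)$ is nonempty and finite, and every point of it lies in $\overline{C_0}$ and in the displayed union (taking $k=1$, $C=C_0\in\mathcal{P}(1,p)$); symmetrically if $\pi_1(C_0)$ is bounded from above we use $Z_{\sup}(C_0)$. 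Iterating, at stage $k$ we have a connected component $C_{k-1}$ of $\mathcal{P}\cap\{x_1=p_1,\dots,x_{k-1}=p_{k-1}\}$ contained in $\overline{C_0}$'s relevant slice structure; either $\pi_k(C_{k-1})$ is bounded on one side, in which case $Z(C_{k-1},k)\neq\emptyset$ gives us a point of $\overline{C_{k-1}}\subseteq\overline{C_0}$ in the displayed set, or $\pi_k(C_{k-1})=\R\ni p_k$ and we pass to a connected component $C_k$ of $C_{k-1}\cap\{x_k=p_k\}$. The key point making the recursion descend correctly is the containment $\overline{C_k}\subseteq\overline{C_{k-1}}$ and, crucially, that whenever we slice by $\{x_k=p_k\}$ we have $p_k\in\pi_k(C_{k-1})$ because the only alternative to boundedness is $\pi_k(C_{k-1})=\R$.

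The recursion must terminate: if it has not stopped after $n$ steps, then $C_{n-1}$ is a connected component of $\mathcal{P}\cap\{x_1=p_1,\dots,x_{n-1}=p_{n-1}\}$ with $\pi_n(C_{n-1})=\R$, which forces $C_{n-1}$ to be the whole line $\{x_1=p_1,\dots,x_{n-1}=p_{n-1}\}$; in particular $p=(p_1,\dots,p_n)\in C_{n-1}\subseteq\overline{C_0}$, so the point $\{p\}$, explicitly included in the displayed union, lies in $\overline{C_0}$. Thus in every branch of the recursion we exhibit an element of the displayed finite set inside $\overline{C_0}$, which completes the proof.

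The step I expect to be the main obstacle — or at least the one needing the most care — is the bookkeeping that lets the recursion go through: one must check that the connected component $C_k$ selected at each stage is genuinely one of the components enumerated in $\mathcal{P}(k+1,p)$ (i.e. that slicing a connected component of $\mathcal{P}\cap\{x_1=p_1,\dots\}$ again by one more hyperplane and passing to a connected component lands inside $\mathcal{P}\cap\{x_1=p_1,\dots,x_k=p_k\}$, which is immediate since $C_k\subseteq C_{k-1}\subseteq\mathcal{P}$), and that the hypotheses of Proposition \ref{cambio_var_gen_finit_extrem} apply to each $C_{k-1}$ with the coordinate $x_k$ — which is exactly why that proposition was stated for arbitrary $k$ and arbitrary partial specializations $x_1=p_1,\dots,x_{k-1}=p_{k-1}$, so the application is direct. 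No genericity beyond that single change of variables is needed, since the same generic change works uniformly for all $\mathcal{P}$, all $p$, and all $k$.
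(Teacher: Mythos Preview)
Your proof is correct and follows essentially the same approach as the paper's: invoke Proposition \ref{cambio_var_gen_finit_extrem} for finiteness, then run the same descent argument, at each stage either extracting a point from $Z(C_{k-1},k)$ when $\pi_k(C_{k-1})$ is bounded on one side, or slicing by $\{x_k=p_k\}$ and recursing, terminating with $p\in C_0$ if no slice is ever bounded. The only differences are cosmetic (your indexing starts at $C_0$ rather than $C_1$, and you spell out explicitly that semialgebraic sets have finitely many connected components, which the paper leaves implicit).
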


\begin{proof}{Proof: } Proposition \ref{cambio_var_gen_finit_extrem}
ensures that this set is finite. Let $C_1 \in \mathcal{P}(1,p)$. If
$\pi_1(C_1)$ is bounded from above or below, again Proposition
\ref{cambio_var_gen_finit_extrem} states that $Z(C_1,1)$ is a finite
non-empty set and is included in $\overline{C}_1$. Otherwise,  $
\pi_1(C_1) = \R$ and $C_1 \cap \{x_1 = p_1\} \ne \emptyset$. Let
$C_2 \in \mathcal{P}(2, p)$ be a connected component of $C_1 \cap
\{x_1 = p_1\}$. If $\pi_2(C_2)$ is bounded from above or below,
$Z(C_2, 2)\ne \emptyset$ and  is included in $\overline{C}_2\subset
\overline{C}_1$. Otherwise, $\pi_2(C_2) = \R$ and $C_1 \cap \{x_1 =
p_1, x_2 = p_2\} \ne \emptyset$. Following this procedure, we obtain
that either there exists $C_k \in \mathcal{P}(k,p)$  such that
$Z(C_k,k)\ne \emptyset$ and  is included in $\overline{C}_k\subset
\overline{C}_1$ for some $1 \le k \le n$ or $p \in C_1$.
\end{proof}

The  proof above leads to the recursive structure of our algorithm:
For $2\le k \le n$, we may think the $k$-th  variable as the first
one for the polynomials $f_j(p_1, \dots, p_{k-1}, x_k, \dots, x_n)$
for $1\le j \le  m$. Therefore, it is enough to consider the problem
of finding extremal points for the projection over the first
coordinate of the closures of the connected components of a
semialgebraic set.

\subsection{Equations defining extremal points}
\label{definicion_sistemas}

Let $f_{1} , \dots , f_{m} \in \R[x_1, \dots, x_n]$ and let $ S:= \{
i_1,\dots, i_s\} \subset \{1, \dots, m\}$. If $1\le s \le n-1$, the
implicit function theorem implies that a point $z$ with maximum or
minimum first coordinate in a connected component of $\{f_{i_1} =
\dots = f_{i_s} = 0\}$ satisfies \begin{equation}\label{rankift}
f_{i_1}(z)= \dots = f_{i_s}(z) = 0, \ \rk \left(
\begin{array} {ccc}
\frac{\partial f_{i_1}}{\partial x_2} (z) & \cdots & \frac{\partial f_{i_1}}{\partial x_n} (z)  \\
\vdots & \ddots & \vdots \\
\frac{\partial f_{i_s}}{\partial x_2} (z) & \cdots & \frac{\partial
f_{i_s}}{\partial x_n}(z)
\end{array}
\right) < s.\end{equation} This condition can be rewritten as
\begin{equation}\label{J_S}
\left\{\begin{array}{l}f_{i_1}(z)= \dots = f_{i_s}(z) = 0,
\\[1mm]
\sum_{j = 1}^s \mu_j\overline\nabla f_{i_j}(z) = (0, \dots,
0).\end{array}\right.
\end{equation}
for  $\mu_1, \dots, \mu_s \in \R$ not simultaneously zero, where
$\overline\nabla f_{i_j}(z)$ denotes the vector obtained by removing
the first coordinate from the gradient $\nabla f_{i_j}(z)$.

When $s=1$, for $z \in\R^n$, conditions (\ref{rankift}) are
equivalent to
\begin{equation} \label{cond_1}
f_{i_1}(z)= \frac{\partial f_{i_1}}{\partial x_2}(z) = \dots =
\frac{\partial f_{i_1}}{\partial x_n}(z) = 0.
\end{equation}
When $s\ge n$, we will simply consider the conditions
\begin{equation} \label{cond_s_grande}
f_{i_1}(z)=  \dots = f_{i_s}(z)= 0.
\end{equation}

For every $2\le s \le n-1$, as system (\ref{J_S}) is homogeneous in
the variables $\mu_1, \dots, \mu_s$, we consider the variety $W_S
\subset \A^n_{\C} \times \P_{\C}^{s-1}$ defined as the zero set of
this system. If $s=1$ or $s\ge n$, let $W_S$ be the variety defined
by systems (\ref{cond_1}) and (\ref{cond_s_grande}) respectively.

The following result is an adaptation to our context of the
Karush-Kuhn-Tucker conditions (see \cite{Ped04}) from non-linear
optimization which generalize the Lagrange multipliers theorem in
order to consider equality and inequality constraints.

\begin{proposition} \label{karush_kuhn_tucker} Let $f_1,\dots, f_m \in \R[x_1,\dots, x_n]$ and
$\sigma=(\sigma_1,\dots, \sigma_m)\in \{\le, <, = , > , \ge\}^m$.
Set $E_\sigma=\{ i \mid \sigma_i = ``="\}$. Then, for every
connected component $C$ of the set $\{ x\in \R^n \mid f_1(x)
\sigma_1 0, \dots, f_m(x) \sigma_m 0 \}$, we have
$$Z(C) \subset  \bigcup_{E_\sigma \subset S \subset \{1, \dots, m\},\atop{S \ne \emptyset}} \pi_x(W_S)
.$$
\end{proposition}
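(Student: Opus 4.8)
The plan is to fix a connected component $C$ of $\{x \in \R^n \mid f_1(x) \sigma_1 0, \dots, f_m(x) \sigma_m 0\}$ and a point $z \in Z(C)$, say $z \in Z_{\inf}(C)$ (the case $z \in Z_{\sup}(C)$ is symmetric), and exhibit a set $S$ with $E_\sigma \subset S \subset \{1,\dots,m\}$, $S \ne \emptyset$, such that $z \in \pi_x(W_S)$. The natural candidate is to take $S = E_\sigma \cup A(z)$, where $A(z) = \{ i \mid f_i(z) = 0\}$ is the set of \emph{active constraints} at $z$: clearly $f_i(z) = 0$ for all $i \in S$, so it remains to produce the Lagrange-type multipliers. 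Note first that $S \ne \emptyset$: if $S$ were empty then no constraint would be active at $z$, so all the $f_i$ would have constant sign on a Euclidean neighborhood of $z$; such a neighborhood would then lie in $C$, and $z$ would be an interior point of $C$, contradicting that $z$ realizes the infimum of $x_1$ on $C$ (a small ball around $z$ contains points of $C$ with strictly smaller first coordinate).

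Next I would set up the first-order optimality argument. Since $z$ minimizes $x_1$ over $C$, and $C$ is (locally near $z$) cut out by the equalities $f_i = 0$ for $i \in E_\sigma$ together with the inequality constraints that are active at $z$ (the inactive ones being locally irrelevant), $z$ is a constrained local minimizer of the linear functional $x \mapsto x_1$. Applying the Karush--Kuhn--Tucker conditions (\cite{Ped04}) in the form that does not require a constraint qualification — i.e. the Fritz John formulation — yields real numbers $\lambda_0, (\mu_i)_{i \in S}$, not all zero, with $\lambda_0 \ge 0$, such that $\lambda_0 \nabla(x_1)(z) + \sum_{i \in S} \mu_i \nabla f_i(z) = 0$, i.e. $\lambda_0 e_1 + \sum_{i\in S} \mu_i \nabla f_i(z) = 0$. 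Looking at coordinates $2,\dots,n$ of this vector equation gives exactly $\sum_{i \in S} \mu_i \overline\nabla f_i(z) = (0,\dots,0)$. The only thing to rule out is that all the $\mu_i$ vanish: if they did, the relation would force $\lambda_0 e_1 = 0$, hence $\lambda_0 = 0$, contradicting that the multipliers are not all zero. So $(\mu_i)_{i\in S}$ is a nonzero tuple, and together with $f_i(z) = 0$ for $i \in S$ this is precisely the defining system of $W_S$ (system (\ref{J_S}) when $2 \le |S| \le n-1$, system (\ref{cond_1}) when $|S| = 1$ where the active gradient $\overline\nabla f_i(z)$ vanishing is equivalent to (\ref{cond_1}), and system (\ref{cond_s_grande}) when $|S| \ge n$ where only the equations are imposed). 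Hence $z \in \pi_x(W_S)$, and since $E_\sigma \subset S \subset \{1,\dots,m\}$ with $S \ne \emptyset$, the point $z$ lies in the asserted union.

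I expect the main obstacle to be the rigorous reduction to a constrained-minimization statement to which KKT/Fritz John applies: one must argue that near the boundary point $z$ the component $C$ coincides, as far as first-order behavior is concerned, with the semialgebraic set defined by the active constraints only (dropping strict inequalities that hold at $z$), and that a point of $Z_{\inf}(C)$ is genuinely a local minimizer of $x_1$ on that set. This is where some care with the semialgebraic structure of $C$ is needed — in particular that $C$, being a connected component of a semialgebraic set, is locally path-connected and its closure near $z$ is controlled by the active $f_i$ — but no genericity of coordinates is required here, so this proposition holds for the original polynomials. Once that reduction is in place, the multiplier extraction is the standard Fritz John computation sketched above; the degenerate case analysis (some $f_i$ being constant, or $z$ not smooth, which is exactly why the Fritz John form rather than the classical Lagrange form is used) is handled automatically by allowing the $\mu_i$ to be a nonzero tuple in projective space rather than demanding the gradients be independent.
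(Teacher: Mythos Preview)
Your choice of $S_0 = \{i : f_i(z) = 0\}$ and the verification that $S_0 \ne \emptyset$ match the paper, and the overall plan---extract a Fritz--John multiplier relation at $z$---is the right one. But the gap you flag is real and is not repaired by ``some care with the semialgebraic structure'': the point $z\in Z_{\inf}(C)$ need \emph{not} be a local minimizer of $x_1$ on any feasible set built from the active constraints, because $\overline C$ is only the closure of one connected component and the active-constraint set may leak into other components near $z$. Concretely, take $n=2$, $m=1$, $f_1=x_1x_2$, $\sigma_1=``>"$, and let $C$ be the open-first-quadrant component; then $z=(0,0)\in Z_{\inf}(C)$, and the only constraint sets you can form from $f_1$ are $\{f_1\ge 0\}$ or $\{f_1=0\}$, which contain $(-\varepsilon,-\varepsilon)$ and $(-\varepsilon,0)$ respectively. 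So $z$ is not a local minimizer on either, Fritz John does not apply, and the conclusion $z\in\pi_x(W_{\{1\}})$ holds only because $\partial f_1/\partial x_2(z)$ happens to vanish already.

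The paper does not cite Fritz John; it proves the needed implication directly by contradiction. Assuming $\{\overline\nabla f_i(z):i\in S_0\}$ linearly independent (with $t:=|S_0|\le n-1$), one reorders variables so that $h(x)=(x_1-z_1,\dots,x_{n-t}-z_{n-t},f_{i_1}(x),\dots,f_{i_t}(x))$ is a local diffeomorphism at $z$, with inverse $g$ on a neighborhood $U$ on which the inactive $f_i$ have constant sign. The key move is to pick $w\in C\cap U$ (not $z$, which may lie outside $C$), set $y=h(w)$, and slide only the first coordinate of $y$ downward: along the curve $u\mapsto g(u,y_2,\dots,y_n)$ each $f_{i_j}$ retains its value at $w$ and the inactive $f_i$ keep their signs, so the curve lies in the realization of $\sigma$ and hence (being connected to $w$) in $C$. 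Taking $u=-\varepsilon/2$ produces a point of $C$ with first coordinate $z_1-\varepsilon/2<z_1$, contradicting $z_1=\inf\pi_1(C)$. Starting from the auxiliary point $w\in C$ rather than from $z$ is precisely what circumvents the component-mixing obstacle that blocks the black-box appeal to Fritz John.
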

\begin{proof}{Proof: } Without loss of generality, assume  $E_\sigma = \{ 1,\dots, l\}$
(or $E_\sigma = \emptyset$ and $l=0$) and $\pi_1(C)$ is bounded from
below. Let $z=(z_1, \dots, z_n) \in Z_{\inf}(C)$ and $S_0 = \{i \in
\{1, \dots, m\} \mid f_i(z) = 0\}$. Note that $E_\sigma \subset S_0$
and, even when $l = 0$, $S_0\ne \emptyset$; then, we may assume that
$S_0=\{1, \dots, t\}$ with $\max\{1,l\}\le t \le m$. We will show
that $z\in \pi_x(W_{S_0})$.

If $t\ge n$, we have  $z\in \pi_x(W_{S_0})$ by the definition of
this set.

Assume now that $t\le n-1$. If  $z\notin \pi_x(W_{S_0})$, the set $
\{\overline \nabla f_i(z), i\in S_0\}$ is linearly independent. Let
$f: \R^n \to \R^t$ be the map $f = (f_1, \dots, f_t)$. We may assume
that the minor corresponding to the variables $n-t+1, \dots, n$ in
the Jacobian matrix $Df(z)$ is not zero. Applying the inverse
function theorem to $h(x) = (x_1 - z_1, \dots, x_{n-t} - z_{n-t},
f_1(x), \dots, f_t(x))$, there exist an open neighborhood $U$ of
$z$, $\varepsilon \in \R_{> 0}$ and a map $g:(-\varepsilon,
\varepsilon)^n \to U$ inverse to $h: U \to (-\varepsilon,
\varepsilon)^n$. Moreover, we may assume that $f_{t+1},\dots, f_m$
have constant signs on $U$.

Let $w \in C \cap U$ and let $y = h(w)$. Let $\widetilde \sigma \in
\{<,=,>\}^m$ be  such that $f_i(w) \widetilde \sigma_i 0$ for $1\le
i \le m$. Then, the conditions $y_1 = w_1 - z_1\ge 0,  y_{n-t+1}
\widetilde \sigma_1 0, \dots,
 y_n
\widetilde\sigma_t 0$ hold. Since $w\in C$, for $1\le i \le m$,
 $\widetilde
\sigma_i\in \{<, =\}$ if $\sigma_i= ``\le"$, $\widetilde \sigma_i\in
\{>, =\}$ if $\sigma_i = ``\ge"$ and $\widetilde \sigma_i =
\sigma_i$ otherwise. Hence, every point satisfying $\widetilde
\sigma$ also satisfies $\sigma$.

Let $\gamma: [-\varepsilon /2, y_1] \to (-\varepsilon,
\varepsilon)^n$  be defined as $\gamma (u) = (u, y_2, \dots, y_n)$.
For $u \in [-\varepsilon /2, y_1]$ and $1 \le i \le t$, $f_i( g
\circ \gamma(u)) = y_{n-t+i}\widetilde \sigma_i 0$. Taking into
account that, for $t+1\le i \le m$, $f_i$ has constant sign over $U$
and the image of $g\circ \gamma$ lies in $U$, we also have that $f_i
(g\circ \gamma(u)) \widetilde \sigma_i 0$ for $t+1\le i \le m$.
Therefore, the image of $g\circ \gamma$ is contained in the
realization of $\sigma$ and, since it is a connected curve with a
point $g \circ \gamma(y_1) = w $ in the connected component $C$, we
conclude that it is contained in $C$. Now, the first coordinate of
$g \circ \gamma(-\varepsilon /2)$ is $-\varepsilon /2 + z_1< z_1$,
contradicting the fact that $z_1 = \inf \pi_1(C)$.
\end{proof}

\subsection{Deformation techniques for bihomogeneous systems}
\label{algoritmo_resolucion_sistemas}

In this subsection we present briefly a symbolic deformation
introduced in \cite{GHMMP98}, \cite{GHHM+}, \cite{HKPSW},
\cite{GLS01} and \cite{Schost03}, adapted to the bihomogeneous
setting following \cite{HJSS}.

\subsubsection{The deformation}\label{sistemainic}

Given polynomials $h_1(x),\dots, h_s(x), h_{s+1}(x, \mu), \dots,
h_r(x,\mu)\in\K[x_1,\dots, x_n,\mu_1,\dots, \mu_s]$ we consider the
associated equation system:
\begin{equation}\label{sistema_final}
h_1(x) =0,\dots, h_s(x)  =  0, h_{s+1}(x,\mu)=0,\dots, h_r(x,\mu)
=0,\end{equation} where $ 2 \le s \le n-1$ and $r = s+n-1$ such
that, for $1\le i \le r$,  $\deg_x(h_i) \le d_i \le d$ and, for
$s+1\le i \le r$, $h_i$ is homogeneous of degree $1$ in the
variables $\mu$.

Let $W \subset \A^n \times \P^{s-1}$ be the variety this system
defines. By the multihomogeneous B\'ezout theorem (\cite[Ch.~4,
Sec.~2.1]{Shafarevich}), the degree of $W$ is bounded by
\begin{equation}\label{bezout_number}
D:=\Big(\prod_{i = 1}^s d_i\Big) \Big(\sum_{E \subset \{s+1, \dots,
r\}, \# E = n-s} \prod_{j \in E}d_j\Big) \le \binom{n-1}{s-1}d^n.
\end{equation}

Let $g_1(x), \dots,g_s(x),g_{s+1}(x, \mu),\dots,g_r(x,
\mu)\in\K[x_1,\dots, x_n,\mu_1,\dots, \mu_s]$ be polynomials with
$\deg_x(g_i) = d_i$ for $1\le i \le r$ and homogeneous of degree $1$
in the variables $\mu$ for $s+1\le i \le r$, such that:
\begin{itemize}
\item[(H)] $g_1,\dots, g_r$ define a $0$-dimensional variety in $\A^n
\times \{\mu_s\ne 0\} \subset \A^n \times \P^{s-1}$ with $D$ points
$s_1,\dots, s_D$ satisfying $\pi_x(s_i) \ne \pi_x(s_j)$ for $i\ne
j$, and the Jacobian determinant of the polynomials obtained from
$g_1,\dots, g_r$ by dehomogenizing them with $\mu_s=1$ does not
vanish at any of these points.
\end{itemize}
We will specify polynomial systems meeting these conditions in
Definitions \ref{sistemafactorizable} and \ref{sistemaTcheby} below.

Let $t$ be a new variable. For every $1\le i \le  r$, let
\begin{equation}\label{sistemadeformado}
F_i := (1-t) h_i +t g_i . \end{equation} Consider the variety $\hat
V \subset \A^1 \times \A^{n} \times \P^{s-1}$ defined by $F_1,
\dots, F_r$, and write
\begin{equation}\label{hatV}
\hat V = V^{(0)} \cup V^{(1)} \cup V, \end{equation}
 where $V^{(0)}$ is the union of
the irreducible components of $\hat V$ contained in $\{t = 0\}$,
$V^{(1)}$ is the union of its irreducible components contained in
$\{t = t_0\}$ for some $t_0 \in \C\setminus\{0\}$, and $V$ is the
union of the remaining irreducible components of $\hat V$.

\begin{lemma}\label{finitud}
With our previous assumptions and notation, $\pi_{x,\mu}(V \cap
\{t=0\})$ is a finite subset of $W$ containing all its isolated
points.
\end{lemma}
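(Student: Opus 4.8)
The statement has two parts: (i) $\pi_{x,\mu}(V\cap\{t=0\})\subset W$, and that this intersection is finite; (ii) it contains all isolated points of $W$. I would organize the argument around the standard deformation dictionary: $\hat V$ is cut out by $F_1,\dots,F_r$ in $\A^1\times\A^n\times\P^{s-1}$, and since $\{t=1\}\cap\hat V = \{1\}\times W$ where $W$ is the zero-dimensional variety defined by $g_1,\dots,g_r$ in $\A^n\times\{\mu_s\ne 0\}$ (condition (H)), each of the $D$ points $s_1,\dots,s_D$ is a nonsingular isolated point of that fiber. The curve $V$ is defined to collect exactly those components of $\hat V$ not lying over a single value of $t$; since each such component dominates the $t$-line, $V$ is a curve (each irreducible component has dimension $1$), hence $V\cap\{t=0\}$ is finite, and so is its image under $\pi_{x,\mu}$. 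That disposes of finiteness.

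**Inclusion in $W$.** For a point $(0,\xi,\mu^{\ast})\in V\cap\{t=0\}$, I would use continuity/closedness: this point lies in the closure of $V\setminus\{t=0\}$, along which $t\ne 0$, $t\ne t_0$, and the equations $F_i=0$ hold; specializing $F_i=(1-t)h_i+tg_i$ at points with $t\notin\{0,1\}$ does not immediately give $h_i=0$. So instead I would argue as in the references: the projection $\pi_t\colon V\to\A^1$ is dominant on each component, hence the fiber $V\cap\{t=t'\}$ for generic $t'$ is finite and, by upper semicontinuity of fiber dimension, $V\cap\{t=0\}$ is finite too; but more to the point, one observes that $\hat V\cap\{t=t'\}$ is contained in the variety defined by $F_1,\dots,F_r$ at $t=t'$, which after dividing is the variety defined by $h_i + \frac{t'}{1-t'}g_i$. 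The cleanest route: note that every component of $\hat V$ other than those in $V^{(0)}$ and $V^{(1)}$ dominates $\A^1$, so $V\cap\{t=0\}$ is a limit of points on $V$ with $t\to 0$; along such a curve $F_i=0$ and as $t\to 0$ we get $h_i(\xi,\mu^{\ast})=0$ for all $i$, hence $(\xi,\mu^{\ast})\in W$. (Here $\mu^{\ast}\in\P^{s-1}$ is a genuine point because $\P^{s-1}$ is proper, so no component of the $\mu$-part is "lost at infinity.") I expect this limiting argument to be the main obstacle — one must be careful that the $\P^{s-1}$-coordinate does not degenerate and that $V$ genuinely contains no component in $\{t=0\}$ by construction, so the limit point is approached from $t\ne 0$.

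**Isolated points of $W$.** For the reverse containment, let $\zeta=(\xi,\mu^{\ast})\in W$ be an isolated point. I would produce a point of $V$ lying over $\zeta$ at $t=0$ by a dimension count on $\hat V$: consider the component structure of $\hat V$ near $\{0\}\times\{\zeta\}$. Because $g_1,\dots,g_r$ satisfy (H) — a zero-dimensional, nonsingular, well-separated-in-$x$ solution set with exactly $D$ points — and $D$ equals the multihomogeneous Bézout bound \eqref{bezout_number} for the degrees $d_i$, the fiber $\hat V\cap\{t=1\}$ consists of exactly $D$ reduced points; by properness of $\pi_t$ restricted to a projective closure (the $\mu$-factor is already proper; one compactifies the $x$-factor or uses that $\deg_x h_i\le d_i$), each irreducible component of $\hat V$ through one of the $s_i$ dominates $\A^1$, so there are $D$ such components, each a curve in $V$. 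Along each, the fiber over $t=0$ is nonempty (properness again), and since the $D$ curves have distinct $x$-coordinates at $t=1$ and the total count of zero-dimensional points of $W$ (with multiplicity) is at most $D$ by Bézout, the fibers over $t=0$ sweep out all of $W$ at the level of isolated points. Concretely: each isolated point $\zeta$ of $W$ appears with some multiplicity $\mu(\zeta)\ge 1$ in the flat limit of the $D$-point scheme $\hat V\cap\{t=1\}$, and $\sum_{\zeta\text{ isolated}}\mu(\zeta)\le D$ forces every isolated $\zeta$ to be hit.

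**Summary of the strategy and the hard step.** In short: use (H) plus the equality $\deg W \le D = \binom{n-1}{s-1}d^n$ and properness of $\P^{s-1}$ (and a compactification in $x$) to conclude that $\hat V$ has exactly $D$ non-vertical curve-components forming $V$, each specializing at $t=0$ to a point of $W$, and that these specializations cover all isolated points of $W$ by a multiplicity/degree count. I anticipate the delicate point is the properness/compactness bookkeeping needed to guarantee (a) that no branch of $V$ escapes to infinity as $t\to 0$ — handled by the $\P^{s-1}$ factor and by quasi-monicity of the $g_i$ in the appropriate $x$-variable coming from Definitions \ref{sistemafactorizable} or \ref{sistemaTcheby} — and (b) that the limit scheme $V\cap\{t=0\}$ actually contains, and not merely maps into, each isolated point of $W$ with positive multiplicity. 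Once the compactness is set up, the containment both ways is a routine application of the going-down / limit argument familiar from \cite{GHMMP98} and \cite{HJSS}.
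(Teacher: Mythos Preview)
Your finiteness argument is on track, though you should say explicitly why each component of $V$ has dimension \emph{exactly} $1$ (not just $\ge 1$): it dominates the $t$-line and meets the fiber $\{t=1\}$, which by (H) is $0$-dimensional. The inclusion $\pi_{x,\mu}(V\cap\{t=0\})\subset W$ is much simpler than your limiting argument: just observe $V\cap\{t=0\}\subset \hat V\cap\{t=0\}$, and $F_i|_{t=0}=h_i$, so $\pi_{x,\mu}(\hat V\cap\{t=0\})=W$ on the nose.

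The real gap is in your treatment of isolated points. Your argument is circular as written: you assert that ``each isolated point $\zeta$ of $W$ appears with some multiplicity $\mu(\zeta)\ge 1$ in the flat limit,'' which is precisely the claim to be proved, and then appeal to a degree bound $\sum_\zeta \mu(\zeta)\le D$ that does nothing to force any particular $\zeta$ to be hit. A flat limit of $D$ points landing in $W$ could concentrate entirely on one isolated point, or on a positive-dimensional component of $W$, missing other isolated points altogether. No amount of properness or compactification repairs this.

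The paper's argument avoids flat limits and degree counts entirely. The key observation you are missing is the decomposition
\[
W=\pi_{x,\mu}(\hat V\cap\{t=0\})=\pi_{x,\mu}(V^{(0)})\cup\pi_{x,\mu}(V\cap\{t=0\}),
\]
since $V^{(1)}$ has no component in $\{t=0\}$ by definition. Now $\hat V$ is cut out by $r=s+n-1$ equations in a space of dimension $1+n+(s-1)=r+1$, so by the Krull height theorem every irreducible component of $\hat V$, in particular every component of $V^{(0)}$, has dimension $\ge 1$. Since $V^{(0)}\subset\{t=0\}$, the projection $\pi_{x,\mu}$ is an isomorphism on $V^{(0)}$, so $\pi_{x,\mu}(V^{(0)})$ is a union of positive-dimensional subvarieties of $W$ and cannot contain any isolated point of $W$. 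Hence every isolated point lies in $\pi_{x,\mu}(V\cap\{t=0\})$. This is a two-line dimension argument; no properness, no multiplicity bookkeeping.
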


\begin{proof}{Proof:}
Let $V_1$ be an irreducible component of $V$ and $\overline{V}_1$ be
its Zariski closure in $\A^1\times\P^n\times\P^{s-1}$. The
projection of $\overline{V}_1$ to $\A^1$ is onto and so,
$\overline{V}_1\cap \{t=1\}\ne \emptyset$.
 But our assumption on
$g_1, \dots, g_r$ implies that $\overline{V}_1 \cap \{t=1\} =  V_1
\cap \{t = 1\}$; then, $V_1\cap \{t=1\} \ne \emptyset$ and it is
$0$-dimensional.  It follows that $\dim(V_1) = 1$. Therefore,  $V$
is a $1$-equidimensional variety and thus $\pi_{x,\mu}(V \cap
\{t=0\})$ is a finite set.

In order to prove the second part of the statement, note that $W=
\pi_{x,\mu}(\hat V \cap \{ t=0\}) = \pi_{x,\mu}(V^{(0)}) \cup
\pi_{x,\mu}(V \cap \{ t=0\})$. Now, an isolated point of $W$ cannot
belong to $\pi_{x,\mu}(V^{(0)})$, since the dimension of each of its
irreducible components is at least $1$; hence, it lies in
$\pi_{x,\mu}(V\cap \{t=0\})$.
\end{proof}

The same deformation can be applied to a system $h_1(x), \dots,
h_n(x) \in \K[x_1,\dots, x_n]$ with $g_1(x), \dots, g_n(x) \in
\K[x_1,\dots, x_n]$ such that  $\deg(g_i) = \deg (h_i)$ for every $1
\le i \le n$ and having $\prod_{i=1}^n \deg(g_i)$ common zeros in
$\A^n$.

\subsubsection{A geometric resolution}

\begin{lemma}\label{prop_var_t_como_coef} The variety  defined in
$\A^n_{\overline{\K(t)}} \times \P_{\overline{\K(t)}}^{s-1}$ by
$F_1, \dots, F_r$ as in (\ref{sistemadeformado}) is 0-dimensional
and has  $D$  points $S_1,\dots, S_D$ in $\{\mu_{s}\ne 0\}$ such
that $\pi_x(S_i)\ne \pi_x(S_j)$ for $i\ne j$. Moreover, these points
can be considered as elements in $\K [[t-1]]^r$.
\end{lemma}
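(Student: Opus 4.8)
The plan is to analyze the deformed system $F_1, \dots, F_r$ with $t$ treated as a parameter (i.e., working over the field $\overline{\K(t)}$), and then specialize near $t = 1$ to deduce the statement from hypothesis (H). First I would observe that at $t = 1$ the polynomials $F_i$ reduce to $g_i$, and hypothesis (H) gives exactly $D$ points $s_1, \dots, s_D$ in $\A^n \times \{\mu_s \ne 0\}$ with distinct $x$-projections and with non-vanishing Jacobian determinant (after dehomogenizing $\mu_s = 1$). By the implicit function theorem for formal power series — equivalently, Hensel's lemma applied to the complete local ring $\K[[t-1]]$ — each simple solution $s_j$ at $t=1$ lifts uniquely to a solution $S_j \in \K[[t-1]]^n \times \{\mu_s = 1\}$ of the dehomogenized system $F_1(x, \mu; t) = \dots = F_r(x, \mu; t) = 0$ with $S_j|_{t=1} = s_j$; this uses that $\deg_x(g_i) = d_i = \deg_x(F_i)$ so no leading behaviour degenerates, and that the Jacobian, being a nonzero element of $\K[[t-1]]$ (it is nonzero at $t=1$), is invertible in that ring. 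These $S_1, \dots, S_D$, viewed in $\A^n_{\overline{\K(t)}} \times \P^{s-1}_{\overline{\K(t)}}$, are $D$ points in $\{\mu_s \ne 0\}$, and their $x$-coordinates are distinct since they are already distinct at $t = 1$ (two power series agreeing nowhere near $t=1$ cannot be equal).

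Next I would argue that these are \emph{all} the points of the variety defined by $F_1, \dots, F_r$ over $\overline{\K(t)}$, and in particular that this variety is $0$-dimensional. For this I would invoke the multihomogeneous B\'ezout bound: since each $F_i$ has $\deg_x(F_i) = d_i$ and is homogeneous of degree $1$ in $\mu$ for $s+1 \le i \le r$, the variety they define in $\A^n_{\overline{\K(t)}} \times \P^{s-1}_{\overline{\K(t)}}$ has degree at most $D$ whenever it is $0$-dimensional, and more to the point the number of isolated solutions is at most $D$ (one has to be slightly careful: one works on the projective closure $\P^n \times \P^{s-1}$ and checks there is no excess intersection at infinity, or alternatively one notes that a positive-dimensional component would also have to meet $\{t = 1\}$ and thus contradict the $0$-dimensionality at $t=1$ guaranteed by (H), exactly as in the proof of Lemma \ref{finitud}). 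Having produced $D$ distinct points by Hensel lifting and knowing there are at most $D$ isolated ones, the variety consists precisely of $S_1, \dots, S_D$ and is $0$-dimensional.

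The last clause — that the $S_j$ can be taken in $\K[[t-1]]^r$ — is essentially already delivered by the Hensel-lifting step, since the power-series solutions are obtained by Newton iteration starting from $s_j \in \K^n$ (the coordinates of $s_j$ lie in $\K$ because $g_1, \dots, g_r \in \K[x, \mu]$ and the points satisfy (H), so after the coordinate normalization $\mu_s = 1$ each coordinate is algebraic over $\K$; one may need to pass to a finite extension of $\K$ unless one assumes, as is standard in this setup, that the auxiliary system $g_i$ is chosen with $\K$-rational, separated solutions — the definitions referred to as \ref{sistemafactorizable} and \ref{sistemaTcheby} will ensure this). Then the recursion defining the Newton iterates stays inside $\K[[t-1]]$, and the $\mu_s$-coordinate is the constant $1$, so indeed $S_j \in \K[[t-1]]^r$.

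The main obstacle I anticipate is the control of the behaviour at infinity: one must be sure that passing from the affine count to the projective B\'ezout bound does not lose solutions to the hyperplane at infinity of $\A^n$, i.e. that all $D$ multihomogeneous B\'ezout points are accounted for by affine points with $\mu_s \ne 0$. This is exactly where hypothesis (H) — that $g_1, \dots, g_r$ already realize the bound $D$ with all points affine, separated, and simple — does the work: it forces, via the deformation, that the generic-$t$ fibre also has $D$ affine separated simple points and no component escaping to infinity or into $\{\mu_s = 0\}$. The rest is routine application of Hensel's lemma and of the degree estimate \eqref{bezout_number} already recorded in the excerpt.
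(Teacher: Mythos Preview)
Your proposal is correct and follows essentially the same route as the paper: bound the number of solutions over $\overline{\K(t)}$ by $D$ via the multihomogeneous B\'ezout theorem, then produce $D$ distinct points by Newton--Hensel lifting each simple solution $s_i$ of the initial system $g_1,\dots,g_r$ at $t=1$ into $\K[[t-1]]^r$. Your treatment is in fact more careful than the paper's three-line proof about $0$-dimensionality, the behaviour at infinity, and rationality of the $s_i$, but the underlying argument is the same.
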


\begin{proof}{Proof:} The multihomogeneous  B\'ezout Theorem (see, for instance,
\cite[Chapter 4, Section 2.1]{Shafarevich}) states that the degree
of the variety is bounded by $D$. If $s_i$, $1\le i\le D$, are the
common zeros of $g_1,\dots, g_s, g_{s+1}, \dots, g_r$, the Jacobian
of  $F_1, \dots, F_r$ with respect to $x_1,\dots, x_n, \mu_1,\dots,
\mu_{s-1}$ at $t=1$ and $(x, \mu) = s_i$ is nonzero. The result
follows applying the Newton-Hensel lifting (see for example
\cite[Lema 3]{HKPSW}).
\end{proof}

Consider now new variables $y_1, \dots, y_n$ and define $\ell(x,
\mu, y) = \ell(x, y) = \sum_{j = 1}^n y_j x_j$. For $\alpha_1,
\dots, \alpha_n \in \C$,  let $\ell_{\alpha}(x, \mu) =
\ell_{\alpha}(x) = \sum_{j = 1}^n \alpha_j x_j$. Let
\begin{equation}\label{expresion_resumen}
P(t, U, y) = \prod_{i =1}^D \big(   U -   \ell(S_i, y) \big)  =
\frac{\sum_{h=0}^D p_h(t, y)U^h}{q(t)}=\frac{\hat P(t, U,
y)}{q(t)}\in \mathbb{K}(t)[U, y],
\end{equation}
with $\hat P(t, U, y) \in \K[t, U, y]$ with no factors in
$\K[t]\setminus \K$.

In order to compute $P$ we will approximate its roots. The required
precision is obtained from the following upper bound for the degree
of its coefficients. A similar result in the general sparse setting
appears in \cite[Lemma 2.3]{JMSW}, but to avoid a possibly
cumbersome translation of our setting into sparse systems, we give
an alternative statement and its proof here.

\begin{lemma} \label{gradodeP}Using the notation in (\ref{expresion_resumen}), $\deg_t\hat P(t,U,y)\le nD$.
\end{lemma}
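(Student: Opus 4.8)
The strategy is to bound $\deg_t$ of the coefficients $p_h(t,y)$ of the (cleared-denominator) characteristic polynomial $\hat P$ by controlling the $t$-degree of the elementary symmetric functions of the linear forms $\ell(S_i,y)$, where $S_1,\dots,S_D$ are the deformation solutions in $\{\mu_s\neq 0\}$ from Lemma \ref{prop_var_t_como_coef}. I would first pass to the dehomogenized system: set $\mu_s=1$ and regard $F_1,\dots,F_r$ (with $s+1\le i\le r$ now affine-linear in $\mu_1,\dots,\mu_{s-1}$) as defining a zero-dimensional variety of degree $\le D$ in $\A^{n+s-1}$ over $\overline{\K(t)}$. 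Each coordinate of each $S_i$ is then an algebraic function of $t$, and the product $\prod_{i=1}^D(U-\ell(S_i,y))$ has coefficients in $\K(t)[U,y]$ that are symmetric in the $S_i$; by the theory of the Chow form / $u$-resultant, or simply by eliminating all the $x_j,\mu_j$ from $\{F_1=\dots=F_r=0,\ U=\ell(x,y)\}$, these coefficients are (up to the common denominator $q(t)$) the coefficients of an elimination polynomial whose $t$-degree I must estimate.

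The cleanest route is a degree count on a resultant-type elimination. The key quantitative input is that $\deg_x F_i \le d_i \le d$ and $\deg_t F_i \le 1$ for every $i$ (since $F_i=(1-t)h_i+tg_i$). The plan is: homogenize everything appropriately in $(x,\mu)$, view $t$ as a parameter, and express $\hat P(t,U,y)$ as a suitable multivariate resultant (or a minor of a Macaulay-type matrix) of the $r+1$ forms $F_1,\dots,F_r,\ U-\ell(x,y)$ in the $n+s-1$ affine unknowns. Such a resultant is a polynomial in the coefficients of the input forms, homogeneous of degree equal to the product of the degrees of the \emph{other} forms in the coefficients of each given form; since each $F_i$ contributes $t$-degree at most $1$ and the form $U-\ell(x,y)$ contributes $t$-degree $0$, the total $t$-degree is bounded by $\sum_{i=1}^r (\text{product of degrees of the remaining }F_j\text{'s})$. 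Using $\deg F_i\le d_i$ and the multihomogeneous Bézout bound \eqref{bezout_number}, each such product of $r-1=s+n-2$ of the degrees, divided by the missing $d_i$, sums to at most $n$ copies of $D$ — more precisely $\sum_i D/d_i \le nD$ after accounting for the bihomogeneous structure (the $s$ "$x$-equations" and the $n-1$ "$\mu$-linear" equations each drop out once), which is exactly the claimed bound $nD$. One must be slightly careful that $\hat P$ is the \emph{primitive part} in $\K[t]$ of this resultant, so its $t$-degree is no larger than that of the resultant itself.

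The main obstacle is making the resultant/elimination step rigorous in the \emph{bihomogeneous} setting: the naive Macaulay resultant lives in a single projective space, whereas here the natural ambient space is $\A^n\times\P^{s-1}$, so one should use the multihomogeneous resultant (or argue via a generic linear section and the projection formula for Chow forms) to get the sharp constant $n$ rather than something weaker like $nd$ or $(s+n)D$. An alternative, perhaps more elementary, route that avoids resultant machinery altogether: parametrize each $S_i$ by Newton–Hensel as a power series in $(t-1)$, bound the heights/degrees of the lifted coordinates by a telescoping argument on the Hensel iteration, and deduce the bound on the symmetric functions directly; but controlling the degree growth through the iteration is itself delicate, so I would favor the resultant argument and invoke the multihomogeneous Bézout theorem \eqref{bezout_number} as the book-keeping device for the exponent count.
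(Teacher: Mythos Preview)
Your resultant approach is in the same spirit as the paper's proof and would yield the same bound if completed, but the paper's route is more direct and avoids precisely the issue you flag as ``the main obstacle''. The paper interprets $\hat P$ as the square-free defining equation of $\overline{\mathrm{Im}\,\Phi}$ for $\Phi(t,x,\mu,y)=(t,\ell(x,y),y)$, so that $\deg_t\hat P$ equals the cardinality of a generic fibre over $(\beta_0,\beta)\in\A^{n+1}$; each such fibre point yields an isolated zero of the extended system $F_1=\cdots=F_r=\ell_\beta(x)-\beta_0=0$ in $\A^1\times\A^n\times\P^{s-1}$. The key move is then to apply the multihomogeneous B\'ezout theorem in \emph{three} variable groups $(t;\,x;\,\mu)$ rather than two, exploiting $\deg_t F_i\le 1$. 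A short combinatorial bijection---partitions $(E_t,E_x,E_\mu)$ of $\{1,\dots,r+1\}$ correspond to pairs $(E,e)$ with $E\subset\{s+1,\dots,r\}$, $|E|=n-s$, $e\in\{1,\dots,s\}\cup E$---then gives the bound $nD$ in two lines.

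Your proposal arrives at the same combinatorial sum, since the degree of the multihomogeneous resultant in the coefficients of $F_i$ is precisely the multihomogeneous B\'ezout number of the remaining forms. But note that for $i>s$ that number is \emph{not} $D/d_i$, so the formula ``$\sum_i D/d_i\le nD$'' is incorrect as written; the actual sum is exactly the tri-graded count above and still requires the bijection you did not carry out. You would also need to justify that $\hat P$ divides the resultant (handling possible excess components in $\{\mu_s=0\}$ or at infinity), whereas the paper's geometric point-count interpretation of $\deg_t\hat P$ sidesteps this entirely.
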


\begin{proof}{Proof: }
Let  $\Phi: V \times \A^n \to \A^{n+2}$ be the morphism defined by
$\Phi(t, x, \mu, y) = (t, \ell(x, y), y)$. It is easy to see that
$\hat P(t, U, y)$ is a square-free polynomial defining
$\overline{{\rm Im} \Phi}$.

For  $\beta = (\beta_0, \beta_1, \dots, \beta_n) \in \C^{n+1}$, let
$\hat P_\beta(t) = \hat P(t, \beta_0, \beta_1, \dots, \beta_n)$. For
a generic $\beta$, $\hat P_\beta(t)$
 is squarefree and satisfies $ \deg_t
\hat P(t, U, y)= \deg_t \hat P_\beta(t)=\# \big( \overline{{\rm Im}
\Phi}  \cap  \{U = \beta_0, y_1 = \beta_1,$ $ \dots, y_n =
\beta_n\}\big)$.

Since $\overline{{\rm Im} \Phi} \setminus {\rm Im} \Phi$ is
contained in an $n$-dimensional variety, for a generic $\beta$ we
have $\overline{{\rm Im} \Phi} \cap \{U = \beta_0, y_1 = \beta_1,
\dots, y_n = \beta_n\} = {\rm Im} \Phi \cap \{U = \beta_0, y_1 =
\beta_1, \dots, y_n = \beta_n\}$. Moreover, for a  fixed $t_0 \in
\C$, for a generic  $\beta$ we also have $\overline{{\rm Im} \Phi}
\cap \{t = t_0\} \cap \{U = \beta_0, y_1 = \beta_1, \dots, y_n =
\beta_n\} = \emptyset$. Then, for a finite set $T\subset \C$, for a
generic $\beta$, $\overline{{\rm Im} \Phi}  \cap  \{U = \beta_0, y_1
= \beta_1, \dots, y_n = \beta_n\}
 $
$ = {\rm Im} \Phi  \cap  \big(\cap_{t_0 \in T}\big\{t \ne
t_0\big\}\big)  \cap  \{U = \beta_0, y_1 = \beta_1, \dots, y_n =
\beta_n\}.$

Let $T = \{t_0 \in \C \ | \ \hat V$ has an irreducible component
included in $\{t = t_0\}\}$ and $\beta$ generic satisfying all the
previous conditions.

For each $(\bar t, \beta)  \in {\rm Im} \Phi \cap (\cap_{t_0 \in
T}\{t \ne t_0\}) \cap \{U = \beta_0, y_1 = \beta_1, \dots, y_n =
\beta_n\}$, there exists a point $(\bar t, \bar x, \bar \mu) \in V$
such that $\ell_{(\beta_1, \dots, \beta_n)}(x) = \beta_0$. Then,
$(\bar t, \bar x, \bar \mu)$ is a solution to the system
\begin{equation}\label{sistema con l}
F_1(t, x) = 0, \dots, F_s(t, x) = 0, F_{s+1}(t, x, \mu)
 = 0, \dots,F_r(t, x, \mu)  =  0,\ell_{(\beta_1, \dots, \beta_n)}(x) - \beta_0  =
 0
\end{equation}
with $\bar t \notin T$. Since $\hat P_\beta$ is a nonzero
polynomial, $V$ has no irreducible components included in
$\{\ell_{(\beta_1, \dots, \beta_n)}(x) = \beta_0\}$ and, therefore,
$V \cap \{\ell_{(\beta_1, \dots, \beta_n)}(x) = \beta_0\}$ is a
$0$-dimensional variety. This implies that the point $(\bar t, \bar
x, \bar \mu)$ is an isolated solution of the system (\ref{sistema
con l}). Then, in order to get an upper bound for $\deg_t\hat
P(t,U,y)$ it suffices to bound the number of isolated roots of this
system.

Using the multihomogeneous B\'ezout theorem in the three groups of
variables $x$, $\mu$ and $t$, it follows that this system has at
most $ \sum_{(E_t, E_x, E_\mu)} \ \prod_{k \in E_x}  d_k
$
isolated roots, where $d_{r+1} := 1$ and $(E_t, E_x, E_\mu)$ runs
over all the partitions of $\{1, \dots, r+1\}$ into sets $E_t, E_x$
and $E_\mu$ of cardinality $1, n$ and $s-1$ respectively, with $E_t
\subset \{1, \dots, r\}$ and $E_\mu \subset \{s+1, \dots, r\}$. Each
of these partitions $(E_t, E_x,E_\mu)$ can be obtained uniquely from
a subset $E\subset\{s+1, \dots, r\}$ of cardinality  $n-s$ and an
element $e \in \{1, \dots, s\} \cup E$ by taking $E_t = \{e\}, E_x =
(\{1, \dots, s, r+1\} \cup E) \setminus \{e\}$ and $E_\mu = \{s+1,
\dots, r\} \setminus E$. Therefore,
$$
\sum_{(E_t, E_x, E_\mu)} \ \prod_{k \in E_x}  d_k \ = \ \sum_{(E,
e)} \ \prod_{k \in (\{1, \dots, s, r+1\} \cup E) \setminus \{e\}}
 d_k \ \le \ \sum_{(E, e)} \ \prod_{k \in \{1, \dots, s\} \cup
E}  d_k \ \le \ nD.$$
\end{proof}

Note that, if $\pi_{x}(V \cap \{t=0\})=\{z_1, \dots, z_\nu\} \subset
\A^n$, for every $1\le l \le \nu$, $U - \ell(z_l, y)$ divides $\hat
P(0, U, y)$. Then, if $\hat P(0, U, y) = \prod_{j=1}^a q_j(U,
y)^{\delta_j}$ is the factorization of $\hat P(0, U, y)$ in $\C[U,
y]$, we may suppose that $q_l = U - \ell(z_l, y)$ for every $1\le
l\le \nu\le a$. Let  $Q(U, y) = \prod_{j=1}^{a} q_j^{\delta_j-1}$.

Let $\alpha = (\alpha_1, \dots, \alpha_n) \in \C^n$ such that, for
$1 \le j \le a$, $q_j(U, \alpha)$ has the same degree in $U$ as
$q_j(U, y)$ and is square-free and, for $1 \le j_1 < j_2 \le a$,
$q_{j_1}(U, \alpha)$ and $q_{j_2}(U, \alpha)$ are relatively prime
polinomials in $\C[U]$. Note that these conditions are met for a
generic vector $\alpha$. Then, $\frac{\hat P(0, U, \alpha)}{Q(U,
\alpha)}$ is square-free and vanishes at $\ell(z_l, \alpha)$ for
$1\le l \le \nu$. In addition, for $1\le l\le \nu$, the $k$th
coordinate of $z_l$ $(1\le k \le n)$ is the quotient of
$$-\frac{\frac{\partial \hat P}{\partial y_k}(0, \ell(z_l, \alpha),
\alpha)}{Q(\ell(z_l, \alpha), \alpha)}  =  \delta_l (z_l)_k \prod_{j
\ne l} q_{j}(\ell(z_l, \alpha), \alpha).
$$ by
$$
 \frac{\frac{\partial \hat P}{\partial U}(0, \ell(z_l, \alpha), \alpha)}{Q(\ell(z_l, \alpha), \alpha)}
 = \delta_l \prod_{j \ne l} q_{j}(\ell(z_l, \alpha), \alpha)
\ne 0. $$ We deduce the following:

\begin{proposition} \label{resoluc_geom_buscada} Let $\hat P(t,U,y)$ be as in
(\ref{expresion_resumen}). Then, for a generic $\alpha\in \C^n$,
$$\Big\{\frac{\hat P(0, U, \alpha)}{Q(U, \alpha)}; \frac{\frac{\partial
\hat P}{\partial U}(0, U, \alpha)}{Q(U, \alpha)}; -
\frac{\frac{\partial \hat P}{\partial y_1}(0, U, \alpha)}{Q(U,
\alpha)}, \dots, - \frac{\frac{\partial \hat P}{\partial y_n}(0, U,
\alpha)}{Q(U, \alpha)}\Big\}$$ is a geometric resolution of a finite
set containing $\pi_x(V \cap \{ t=0\})$. \hfill $\square$
\end{proposition}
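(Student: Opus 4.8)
Let me abbreviate $q^\ast:=\hat P(0,U,\alpha)/Q(U,\alpha)$, $\tilde q:=\frac{\partial\hat P}{\partial U}(0,U,\alpha)/Q(U,\alpha)$ and $w_k:=-\frac{\partial\hat P}{\partial y_k}(0,U,\alpha)/Q(U,\alpha)$, and write $\ell_\alpha(x)=\sum_k\alpha_k x_k$. The plan is to show that $\{q^\ast;\tilde q;w_1,\dots,w_n\}$ is a geometric resolution, associated with $\ell_\alpha$, of the finite set $\widetilde V:=\{(w_1(\eta)/\tilde q(\eta),\dots,w_n(\eta)/\tilde q(\eta))\mid q^\ast(\eta)=0\}$, and that $\widetilde V\supseteq\pi_x(V\cap\{t=0\})$. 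Here $\alpha$ is taken generic, in particular satisfying the conditions already imposed before the statement.

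First I would record the structural facts. Writing $\hat P(0,U,y)=\prod_{j=1}^a q_j(U,y)^{\delta_j}$ and $Q=\prod_j q_j^{\delta_j-1}$, the product rule shows that $Q(U,y)$ divides both $\frac{\partial\hat P}{\partial U}(0,U,y)$ and $\frac{\partial\hat P}{\partial y_k}(0,U,y)$ in $\C[U,y]$, so after specializing $y=\alpha$ all three numerators above are divisible by $Q(U,\alpha)$; hence $q^\ast,\tilde q,w_1,\dots,w_n$ are genuine polynomials in $\C[U]$ and $q^\ast=\prod_j q_j(U,\alpha)$. By the choice of $\alpha$, $q^\ast$ is square-free, of degree $D^\ast:=\sum_j\deg_U q_j$, with the $q_j(U,\alpha)$ pairwise coprime. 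Now if $\eta$ is a root of $q^\ast$ it is a root of exactly one factor, say $q_{i_0}(U,\alpha)$; evaluating $\tilde q=\sum_i\delta_i\frac{\partial q_i}{\partial U}(U,\alpha)\prod_{j\ne i}q_j(U,\alpha)$ at $\eta$ kills every summand with $i\ne i_0$ and leaves $\tilde q(\eta)=\delta_{i_0}\frac{\partial q_{i_0}}{\partial U}(\eta,\alpha)\prod_{j\ne i_0}q_j(\eta,\alpha)\ne 0$, by square-freeness of $q_{i_0}(U,\alpha)$ and coprimality of the factors. This gives $\gcd(q^\ast,\tilde q)=1$ and makes $z(\eta):=(w_1(\eta)/\tilde q(\eta),\dots,w_n(\eta)/\tilde q(\eta))$ well defined for every root $\eta$ of $q^\ast$, including the roots that do not a priori come from $V\cap\{t=0\}$.

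The second, and central, step is to show that $\ell_\alpha$ is separating for $\widetilde V$, that is, that $\eta\mapsto z(\eta)$ is injective. For this I would use that, by (\ref{expresion_resumen}), $\hat P(t,U,y)=q(t)\prod_{i=1}^D(U-\ell(S_i,y))$ is homogeneous of degree $D$ in the $n+1$ variables $U,y_1,\dots,y_n$, hence so is $\hat P(0,U,y)$; Euler's identity then reads $U\frac{\partial\hat P}{\partial U}(0,U,y)+\sum_k y_k\frac{\partial\hat P}{\partial y_k}(0,U,y)=D\,\hat P(0,U,y)$. Specializing $y=\alpha$ and dividing by $Q(U,\alpha)$ yields the identity $U\tilde q(U)-\sum_k\alpha_k w_k(U)=D\,q^\ast(U)$ in $\C[U]$, and evaluating at a root $\eta$ of $q^\ast$ gives $\ell_\alpha(z(\eta))=\sum_k\alpha_k w_k(\eta)/\tilde q(\eta)=\eta$. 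Thus the $D^\ast$ roots of $q^\ast$ map to $D^\ast$ distinct points, so $\#\widetilde V=D^\ast=\deg q^\ast$, $q^\ast$ is (up to a nonzero scalar) the minimal polynomial of $\ell_\alpha$ over $\widetilde V$, and $\ell_\alpha$ is separating; together with $\gcd(q^\ast,\tilde q)=1$ and the degree bounds $\deg\tilde q\le D^\ast-1<D^\ast$ (each summand of $\tilde q$ has $U$-degree $D^\ast-1$) and $\deg w_k\le D^\ast$ (passing to the remainder of $w_k$ modulo $q^\ast$ in the borderline case $\deg w_k=D^\ast$, which does not change $z(\eta)$), this shows that $\{q^\ast;\tilde q;w_1,\dots,w_n\}$ is a geometric resolution of $\widetilde V$ associated with $\ell_\alpha$. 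It then only remains to note the containment: for $1\le l\le\nu$ the factor $q_l=U-\ell(z_l,y)$ is linear, so $\eta_l:=\ell(z_l,\alpha)=\ell_\alpha(z_l)$ is a root of $q^\ast$, and the explicit evaluation already carried out just before the statement gives $w_k(\eta_l)/\tilde q(\eta_l)=(z_l)_k$ for $1\le k\le n$, i.e.\ $z(\eta_l)=z_l\in\widetilde V$.

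The only point that needs more than bookkeeping around the preceding discussion is handling the roots $\eta$ of $q^\ast$ that do not arise as $\ell_\alpha(z_l)$: one has to know that for those roots too the ratios $w_k(\eta)/\tilde q(\eta)$ assemble into points on which $\ell_\alpha$ is separating and which $q^\ast$ describes exactly. The $(U,y)$-homogeneity of $\hat P$, and the Euler identity it produces, are precisely what supply this; I expect that to be the crux of the argument, with everything else being routine manipulation of the objects introduced above.
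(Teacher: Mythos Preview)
Your argument is correct and in fact more complete than the paper's. The paper's proof is the discussion immediately preceding the proposition (the $\square$ in the statement signals this): it records the factorization $\hat P(0,U,y)=\prod_j q_j^{\delta_j}$, imposes the genericity conditions on $\alpha$, observes that $q^\ast=\prod_j q_j(U,\alpha)$ is square-free, and then checks \emph{only at the roots $\ell_\alpha(z_l)$ coming from $\pi_x(V\cap\{t=0\})$} that the ratio $w_k/\tilde q$ returns $(z_l)_k$. It does not explicitly treat the remaining roots of $q^\ast$, nor verify that $\ell_\alpha$ separates the full parametrized set or that $\tilde q$ is coprime to $q^\ast$; these are left to the reader.

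Your route closes exactly this gap. The key new ingredient is the observation that $\hat P(t,U,y)=q(t)\prod_i(U-\ell(S_i,y))$ is homogeneous of degree $D$ in $(U,y)$, so Euler's identity yields $U\tilde q-\sum_k\alpha_k w_k=Dq^\ast$ after specialization and division by $Q(U,\alpha)$. Evaluating at any root $\eta$ of $q^\ast$ gives $\ell_\alpha(z(\eta))=\eta$, which simultaneously proves that $\tilde q(\eta)\ne 0$ for \emph{every} root, that $\ell_\alpha$ is separating on $\widetilde V$, and that $q^\ast$ is (up to a scalar) the minimal polynomial. This is a genuinely cleaner and more self-contained argument than the paper's. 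One minor remark: your parenthetical about possibly reducing $w_k$ modulo $q^\ast$ is unnecessary, since the homogeneity of $\hat P(0,U,y)$ forces $\deg_U\partial_{y_k}\hat P(0,U,y)\le D-1$, whence $\deg w_k\le D^\ast-1$ automatically.
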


\section{Regular intersections}
\label{seccion_hipotesis}\label{situacion_con_hipotesis}\label{desigualdades_estrictas}

Let  $f_1,\dots, f_m\in \R[x_1,\dots, x_n]$ be polynomials meeting
the following condition:

\begin{hypothesis}\label{la_hipotesis} For every $x \in \C^n$, for every $ \{i_1, \dots, i_s\}
\subset \{1, \dots, m\}$, if $f_{i_1}(x) = \dots = f_{i_s}(x) = 0$,
then $\{\nabla f_{i_1}(x), \dots, \nabla f_{i_s}(x)\}$ is linearly
independent.
\end{hypothesis}

As in Section \ref{definicion_sistemas}, for $S = \{i_1,\dots, i_s\}
\subset \{1, \dots, m\}$, consider the solution set $W_S$ of the
system (\ref{J_S}), (\ref{cond_1}) or (\ref{cond_s_grande})
depending on whether $2\le s\le n-1$, $s=1$ or $s\ge n$
respectively. Note that, under Hypothesis \ref{la_hipotesis}, $W_S$
is the empty set whenever $s>n$. Moreover:

\begin{lemma}\label{conjunto_candidato_finito}
Under Hypothesis \ref{la_hipotesis}, after a generic linear change
of variables, for every $S\subset \{1,\dots, m\}$, the set $W_S$ is
finite.
\end{lemma}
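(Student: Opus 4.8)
The plan is to treat the three cases $s=1$, $2\le s\le n-1$, and $s\ge n$ separately, although the first and last are easy. For $s\ge n$ there is nothing to prove since Hypothesis \ref{la_hipotesis} forces $W_S=\emptyset$ when $s>n$, while for $s=n$ finiteness of the solution set of (\ref{cond_s_grande}) follows from the fact that, under Hypothesis \ref{la_hipotesis}, $\{f_{i_1}=\dots=f_{i_n}=0\}$ is a transversal intersection of $n$ hypersurfaces in $\A^n_\C$, hence $0$-dimensional. For $s=1$ the system (\ref{cond_1}) cuts out, for each $i_1$, the singular locus of $\{f_{i_1}=0\}$ intersected with the critical locus of $\pi_1$; but under Hypothesis \ref{la_hipotesis} the variety $\{f_{i_1}=0\}$ is smooth, and after a generic linear change of variables the critical points of the projection $\pi_1$ restricted to a smooth hypersurface form a finite set — this is a classical transversality/genericity statement (e.g. via the parametric Sard theorem or Bertini-type arguments), and it is exactly the content one extracts from Proposition \ref{cambio_var_gen_finit_extrem} applied to $\mathcal P=\{f_{i_1}=0\}$ together with the description of extremal points. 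So the substantive case is $2\le s\le n-1$.

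For $2\le s\le n-1$, I would argue as follows. Fix $S=\{i_1,\dots,i_s\}$ and let $X_S=\{x\in\A^n_\C\mid f_{i_1}(x)=\dots=f_{i_s}(x)=0\}$; by Hypothesis \ref{la_hipotesis} this is a smooth variety of pure dimension $n-s$ on which the gradients $\nabla f_{i_1},\dots,\nabla f_{i_s}$ are everywhere linearly independent. A point $(x,[\mu])\in W_S$ satisfies $x\in X_S$ together with $\sum_j\mu_j\overline\nabla f_{i_j}(x)=0$; since the full gradients are independent, the $\mu$-part is uniquely determined (up to scalar) by $x$ whenever it exists, so $\pi_x:W_S\to X_S$ is injective onto its image, and it suffices to show that $\pi_x(W_S)$ — the set of critical points of the restriction of $\pi_1$ to the smooth variety $X_S$ — is finite after a generic linear change of variables. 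This is the Lagrange characterization of critical points of a linear projection on a smooth variety, and generic finiteness is the standard fact that, for a generic linear form, its restriction to a fixed smooth (not necessarily compact) variety has only isolated critical points (a polar-variety / generic-hyperplane-section argument, cf.\ \cite{SafTre,BGHP05}). Concretely, I would invoke Proposition \ref{cambio_var_gen_finit_extrem}: applied to $\mathcal P=X_S$ (defined by the equalities $f_{i_1}=\dots=f_{i_s}=0$), it gives, after a generic change of variables simultaneously valid for all $S$, that $Z(C,1)$ is finite for every connected component $C$ of $X_S\cap\R^n$; combining this with (\ref{rankift})–(\ref{J_S}), which say precisely that the real extremal points lie in $\pi_x(W_S)$, controls the real points, and a dimension count over $\C$ (the incidence variety of pairs $(x,[\mu])$ with the extra $n-1$ conditions $\sum\mu_j\overline\nabla f_{i_j}(x)=0$ has expected dimension $0$, and genericity of the change of variables puts it in general position) upgrades this to finiteness of the complex variety $W_S$.

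Finally, since there are only finitely many subsets $S\subset\{1,\dots,m\}$, one intersects the corresponding finitely many nonempty Zariski-open sets of "good" changes of variables to obtain a single generic linear change of variables that works for all $S$ simultaneously, which is what the statement asserts. The main obstacle, in my view, is making the $\C$-dimension count rigorous: one must verify that after a generic linear change of variables the variety defined by (\ref{J_S}) is actually $0$-dimensional (and not merely that its real points satisfying the extremality conditions are finite), which requires checking that the generic projection direction is not tangent to $X_S$ along a positive-dimensional subset — equivalently that the relevant polar variety is $0$-dimensional. This is where one genuinely uses Hypothesis \ref{la_hipotesis} (smoothness of $X_S$ and independence of the gradients) rather than just the bookkeeping from Proposition \ref{cambio_var_gen_finit_extrem}; the rest is routine transversality and a finite intersection of dense open conditions.
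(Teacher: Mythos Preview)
Your case structure and the two key observations---that under Hypothesis~\ref{la_hipotesis} the fiber of $\pi_x:W_S\to X_S$ over each critical point is a single $[\mu]$, and that it therefore suffices to show the critical locus of a generic linear projection on the smooth complex variety $X_S$ is finite---match the paper's proof exactly. The paper also treats $s=n$ directly from the hypothesis and folds $s=1$ into the general case $s\le n-1$ (your separate treatment of $s=1$ is harmless but unnecessary).

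The one point where you diverge is in justifying the complex finiteness of the critical locus. You propose to go through Proposition~\ref{cambio_var_gen_finit_extrem}, which controls only real extremal points of semialgebraic sets, and then ``upgrade'' via a dimension count; you correctly flag this as the main obstacle. The paper avoids this detour entirely: it invokes directly the complex-geometric fact (citing \cite[Section~2.1]{Voisin}, via Sard's theorem and a holomorphic Morse lemma) that a generic linear form on a smooth complex variety has only finitely many critical points. This is precisely the ``standard fact'' you allude to when mentioning polar-variety arguments, so your instinct is right; the point is simply that one should cite it over $\C$ from the start rather than routing through the real Proposition~\ref{cambio_var_gen_finit_extrem}, which gives strictly less and cannot by itself be upgraded to the complex statement.
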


\begin{proof} {Proof:} If $s = n$, Hypothesis \ref{la_hipotesis}
implies that $W_S$ is a finite set.

Now let $s\le n-1$. Note that $\pi_x(W_S)$ is the set of critical
points of the map $(x_1,\dots, x_n)\mapsto x_1$ over the set $\{x\in
\C^n \mid f_{i}(x)= 0  \hbox{ for } i \in S\}$. By the arguments in
\cite[Section 2.1]{Voisin} based on Sard's theorem and a holomorphic
Morse lemma, it follows that a generic linear form has a finite
number of critical points on this  complex variety. Therefore,
taking any of these generic linear forms as the first coordinate,
the set $\pi_x(W_S)$ turns to be finite. Moreover, for every $z\in
\pi_x(W_S)$, since $\{\nabla f_{i}(z),\ i\in S\}$ is linearly
independent and $\{\overline \nabla f_{i}(z),\ i\in S\}$ is linearly
dependent, it follows that there is a unique $ \mu \in \P^{s-1}$
such that $(z, \mu)  \in W_S$.
\end{proof}

Given $f_1,\dots, f_m\in \K[x_1,\dots, x_n]$, we will deal with the
deformation (\ref{sistemadeformado}) and the corresponding varieties
(\ref{hatV}) defined from the systems (\ref{J_S}), (\ref{cond_1})
and (\ref{cond_s_grande}) for $S\subset \{1,\dots, m\}$ with $1\le
\#S\le n$, and an adequate initial system. We will add a subscript
$S$ in the notation $\hat V$, $V^{(0)}$, $V^{(1)}$ and $V$ to
indicate that the varieties are defined from the polynomial system
associated with $S$.

{}From Proposition \ref{karush_kuhn_tucker}, Lemma \ref{finitud} and
Lemma \ref{conjunto_candidato_finito} we deduce:

\begin{proposition}\label{alcanza_con_de_a_n}
Let $f_1,\dots, f_m\in \R[x_1,\dots, x_n]$ be polynomials satisfying
Hypothesis \ref{la_hipotesis}. Let $\sigma \in \{<, =,
>\}^m$ and $E_\sigma = \{i \mid \sigma_i = ``=" \}$. Then, after a generic linear change of
variables, for each connected component $C$ of $\{x \in \R^n \mid
f_{1}(x) \sigma_{1} 0, \dots, f_m(x) \sigma_{m} 0\}$,
$$Z(C) \subset \bigcup_{{E_\sigma \subset S \subset \{1, \dots,
m\}}\atop{1 \le \# S \le n}}\pi_x\big(V_S \cap \{t=0\}\big).$$
\hfill $\square$
\end{proposition}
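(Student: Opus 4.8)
The plan is to combine the three cited results in the obvious way. First I would fix a connected component $C$ of the realization of $\sigma$ and apply Proposition \ref{karush_kuhn_tucker}, which gives
$$Z(C) \subset \bigcup_{{E_\sigma \subset S \subset \{1, \dots, m\}}\atop{S \ne \emptyset}} \pi_x(W_S).$$
So it suffices to show that, after a generic linear change of variables, for each such $S$ we have $\pi_x(W_S) \subset \bigcup_{1 \le \#S' \le n} \pi_x(V_{S'} \cap \{t=0\})$, where the union is over subsets $S'$ in the same range $E_\sigma \subset S' \subset \{1,\dots,m\}$. The key observation is that, under Hypothesis \ref{la_hipotesis}, $W_S = \emptyset$ whenever $\#S > n$ (as already noted in the text right after the statement of the hypothesis, since the system (\ref{cond_s_grande}) then forces $s$ gradients at a common zero to be linearly dependent, contradicting the hypothesis). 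Hence in the union from Proposition \ref{karush_kuhn_tucker} only the sets $S$ with $1 \le \#S \le n$ actually contribute, and for those $S$ it is enough to show $\pi_x(W_S) \subset \pi_x(V_S \cap \{t=0\})$.

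Next I would invoke Lemma \ref{conjunto_candidato_finito}: after a generic linear change of variables, every $W_S$ with $S \subset \{1,\dots,m\}$ is a finite set. A finite variety consists entirely of isolated points, so every point of $W_S$ — in particular every point of $\pi_x(W_S)$, via the corresponding point $(z,\mu) \in W_S$ — is isolated. Then Lemma \ref{finitud}, applied to the deformation (\ref{sistemadeformado}) built from the polynomial system associated with $S$ (namely (\ref{J_S}), (\ref{cond_1}) or (\ref{cond_s_grande}) according to whether $2 \le \#S \le n-1$, $\#S = 1$, or $\#S = n$, together with the appropriate initial system satisfying (H)), tells us that $\pi_{x,\mu}(V_S \cap \{t=0\})$ is a finite subset of $W_S$ containing all its isolated points. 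Since $W_S$ is finite, all its points are isolated, so in fact $\pi_{x,\mu}(V_S \cap \{t=0\}) = W_S$, and projecting onto the $x$-coordinates gives $\pi_x(V_S \cap \{t=0\}) \supset \pi_x(W_S)$. Combining the three inclusions yields
$$Z(C) \subset \bigcup_{{E_\sigma \subset S \subset \{1, \dots, m\}}\atop{1 \le \# S \le n}}\pi_x\big(V_S \cap \{t=0\}\big),$$
as desired.

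The only point requiring care is the compatibility of the genericity conditions: Proposition \ref{karush_kuhn_tucker} needs no change of variables, while Lemma \ref{conjunto_candidato_finito} provides a Zariski-open set of admissible changes of variables, and the finitely many deformations invoked (one for each $S$ with $1 \le \#S \le n$) each require the initial system to satisfy (H), which holds generically as guaranteed by the constructions in Definitions \ref{sistemafactorizable} and \ref{sistemaTcheby}. Taking the intersection of these finitely many nonempty Zariski-open conditions gives a single generic choice for which all the lemmas apply simultaneously; I expect this bookkeeping to be the only mild obstacle, and it is routine.
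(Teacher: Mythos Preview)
Your proposal is correct and follows exactly the approach the paper intends: the paper itself presents this proposition as an immediate consequence of Proposition~\ref{karush_kuhn_tucker}, Lemma~\ref{finitud} and Lemma~\ref{conjunto_candidato_finito}, giving no further argument. One minor remark: the initial systems of Definitions~\ref{sistemafactorizable} and~\ref{sistemaTcheby} are fixed deterministic constructions (shown to satisfy (H) in Lemmas~\ref{sistema_factorizable_apropiado} and~\ref{todo_bien_Tcheby}), not generic choices, so the only genericity needed is that of Lemma~\ref{conjunto_candidato_finito}.
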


\subsection{Initial systems for deformations}

In what follows we introduce  the initial systems for our first
algorithmic deformation procedure.

\begin{defn}\label{sistemafactorizable} For a given $s$ with $1< s <n$ and $r:=
s+n-1$, a \emph{type 1 initial system} is a polynomial system of the
form:
$$
\left\{\begin{array}{ll}
g_i(x) =  \displaystyle\prod_{1 \le j \le d_i} (x_i - j) & \hbox{ for } 1\le i \le  s, \\[0.4cm]
g_i(x, \mu) = \Big(\displaystyle\prod_{1 \le j\le
d_i}\phi_{ij}(x)\Big)\psi_{i}(\mu)  & \hbox{ for } s+1\le i \le r,
\end{array}\right.
$$
where, for $s+1\le i \le r$, $\phi_{ij}(x) = \Big(\sum_{s+1 \le k
\le n} \frac{1}{(i-s-1)d + j-1 + k-s }x_k \Big) + \frac{1}{(i-s-1)d
+ j-1 + n+1 -s}$ $(1 \le j \le d_i)$,   and $ \psi_{i}(\mu) =
\sum_{1 \le k \le s}\frac{1}{i - s - 1 + k}\mu_k$.

 For $s=1$ and $s=n$, a type 1 initial system consists of $n$
polynomials of the form $g_i(x) =  \displaystyle\prod_{1 \le j \le
d_i} (x_i - j)$ $(1\le i \le n)$.
\end{defn}

\begin{lemma}\label{sistema_factorizable_apropiado}
Property (H) (see Section \ref{sistemainic}) holds for any type 1
initial system.
\end{lemma}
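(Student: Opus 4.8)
The plan is to verify the two parts of property (H) separately for a type 1 initial system. First I would observe that the system has a triangular-like product structure: the equations $g_i(x)=\prod_{1\le j\le d_i}(x_i-j)$ for $1\le i\le s$ only involve the variables $x_1,\dots,x_s$ (in fact each depends on a single variable), so their common zero locus in the $x$-coordinates is exactly the grid $\{1,\dots,d_1\}\times\cdots\times\{1,\dots,d_s\}$, a set of $\prod_{i=1}^s d_i$ points, with all coordinates distinct and with each factor having a simple root. Then, for $s+1\le i\le r$, on $\{\mu_s\ne 0\}$ we may dehomogenize with $\mu_s=1$; the equation $g_i(x,\mu)=0$ factors as $\big(\prod_{j}\phi_{ij}(x)\big)\psi_i(\mu)=0$, so at a solution either one linear form $\phi_{ij}(x)$ vanishes or the linear form $\psi_i(\mu)$ vanishes. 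I would count the zeros by the multihomogeneous Bézout bound $D=\big(\prod_{i=1}^s d_i\big)\big(\sum_{E\subset\{s+1,\dots,r\},\,\#E=n-s}\prod_{j\in E}d_j\big)$ from (\ref{bezout_number}), and then exhibit exactly $D$ distinct solutions, so that the bound is attained and all of them are simple (nonvanishing Jacobian).

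The combinatorial heart is the following: once the first $s$ coordinates $x_1,\dots,x_s$ are fixed at a grid point, the remaining $n-s$ equations among the last $r-s=n-1$ polynomials $g_{s+1},\dots,g_r$ must be used to determine the $n-s$ variables $x_{s+1},\dots,x_n$ and the point $\mu\in\P^{s-1}$ (equivalently $\mu_1,\dots,\mu_{s-1}$ after setting $\mu_s=1$). The $\mu$-part is governed by the $s-1$ linear forms $\psi_i$: I would choose a subset $E\subset\{s+1,\dots,r\}$ with $\#E=n-s$, use $\prod_{j\in E}\phi_{ij}=0$ (one choice of index $j$ per $i\in E$) to pin down the $x$-variables, and use the complementary $s-1$ equations $\psi_i(\mu)=0$, $i\notin E$, to pin down $\mu$. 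The key linear-algebra facts to check are: (i) the $(n-s)\times(n-s)$ matrix of coefficients of the chosen forms $\phi_{ij}$ in $x_{s+1},\dots,x_n$ is invertible, so the $x$-part has a unique solution; (ii) the $(s-1)\times(s-1)$ matrix of the $\psi_i$, $i\notin E$, in $\mu_1,\dots,\mu_{s-1}$ is invertible, so $\mu$ is uniquely determined with $\mu_s=1$ consistent; and (iii) distinct pairs $(E,\text{choice of }j)$ together with distinct grid points give distinct solutions in the $x$-coordinates. All three reduce to nonvanishing of certain Cauchy-type determinants: the coefficients $\frac{1}{(i-s-1)d+j-1+k-s}$ and $\frac{1}{i-s-1+k}$ are precisely of Cauchy-matrix form $\frac{1}{a_r+b_c}$, whose determinant $\prod_{r<r'}(a_{r'}-a_r)\prod_{c<c'}(b_{c'}-b_c)/\prod_{r,c}(a_r+b_c)$ is nonzero because the parameters were chosen to be pairwise distinct (the shift $+jd$-type offsets guarantee the rows indexed by different $(i,j)$ are distinct). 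This Cauchy-determinant computation is the main obstacle — everything else is bookkeeping — and it is exactly why the rather baroque coefficients in Definition \ref{sistemafactorizable} were chosen.

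Finally I would assemble the count: the number of pairs $(E, \text{one }j\text{ per }i\in E)$ is $\sum_{E,\,\#E=n-s}\prod_{j\in E}d_j$ (for each $i\in E$ there are $d_i$ choices of $\phi_{ij}$), and multiplying by the $\prod_{i=1}^s d_i$ grid points gives exactly $D$ solutions. Since $D$ is also the Bézout upper bound, these are all the solutions, they lie in $\{\mu_s\ne0\}$, they have pairwise distinct $x$-projections by step (iii), and the Jacobian of the dehomogenized system is block-triangular with the invertible Cauchy blocks and the diagonal $\prod_{j'\ne j}(x_i-j')$ factors on its diagonal, hence nonzero at each solution. For the degenerate cases $s=1$ and $s=n$ the system is the pure grid $g_i(x)=\prod_j(x_i-j)$, which trivially has $\prod_i d_i$ simple common zeros with distinct projections, so (H) holds as well. $\square$
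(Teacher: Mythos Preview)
Your proposal is correct and follows essentially the same approach as the paper: decouple the grid in $(x_1,\dots,x_s)$, then for each of the remaining $n-1$ factored equations choose either a $\phi$-factor or the $\bar\psi$-factor to vanish, and reduce everything (existence, uniqueness, distinctness of $x$-projections, nonvanishing Jacobian) to the invertibility of Cauchy minors built from the coefficients in Definition~\ref{sistemafactorizable}.

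The one place where the arguments differ is how you conclude that the $D$ exhibited points are \emph{all} the solutions. You invoke the multihomogeneous B\'ezout bound; the paper instead argues directly that any solution of the dehomogenized system must vanish on exactly $n{-}s$ of the $\phi_{ij}$ and exactly $s{-}1$ of the $\bar\psi_i$, because if $(\bar x_{s+1},\dots,\bar x_n,1)$ lies in the kernel of $a$ of the $\phi$-rows then the corresponding $a\times(n{-}s{+}1)$ Cauchy matrix forces $a\le n{-}s$, and dually $b\le s{-}1$, so $a+b=n{-}1$ forces equality. This direct classification also shows that choices with $a\ne n{-}s$ give an \emph{empty} intersection, hence rules out positive-dimensional components without appealing to B\'ezout. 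Your B\'ezout shortcut is fine provided you use the degree-inequality form (bounding the total degree summed over all irreducible components) rather than the isolated-points form; since the needed Cauchy facts are already present in your steps (i)--(iii), the paper's direct route costs nothing extra and is slightly more self-contained.
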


\begin{proof}{Proof}
Let $\bar g_{s+1}, \dots, \bar g_r$ be the polynomials obtained by
dehomogeneizing $g_{s+1},\dots,g_r$ in the variables $\mu$ with
$\mu_s= 1$ and let $(\bar{x}_1, \dots, \bar{x}_n, \bar{\mu}_1,
\dots, \bar{\mu}_{s-1}) \in \A^{n+s-1}$ be a solution of $g_1 =
\dots = g_s = \bar{g}_{s+1} = \dots = \bar{g}_r =0$. As,  for $1 \le
i \le s$, $\bar{x}_i$ is one of the  $d_i$ zeros of $g_i$,  to count
the solutions of the original system, the $\prod_{1 \le i \le s} d_i
$ solutions $(\bar{x}_1, \dots, \bar{x}_s)$ should be combined with
the solutions $(\bar{x}_{s+1}, \dots, \bar{x}_n, \bar{\mu}_1, \dots,
\bar{\mu}_{s-1})$ of the system $\bar{g}_{s+1} = \dots = \bar{g}_r =
0$. Then,  it suffices to show that this new system has $\sum_{E
\subset \{s+1, \dots, r\}, \# E = n-s} \prod_{k \in E}d_k$ different
zeros in $\A^{n-1}$.

For $s+1 \le i \le r$, $\bar{g}_i$ is a product of $d_i$ linear
forms $\phi_{ij}(x_{s+1}, \dots, x_n)$ and the linear form
$\bar\psi_i(\mu_1, \dots, \mu_{s-1}) := \psi_i(\mu_1, \dots,
\mu_{s-1}, 1)$. Therefore, each $(\bar{x}_{s+1}, \dots, \bar{x}_n,
\bar{\mu}_1, \dots, \bar{\mu}_{s-1})$ must be a zero of at least one
of these $d_i + 1$ linear forms for each $i$. Suppose it is a zero
of $a$ linear forms  $\phi_{ij}(x)$ and  $b \ge r-s-a$ linear forms
$\bar \psi_i(\mu)$. Then, $(\bar x_{s+1}, \dots, \bar x_n, 1)$ is a
nonzero vector in the kernel of a Cauchy matrix in $\Q^{a \times
(n-s+1)}$, and therefore, $n-s \ge a$. On the other hand,
$(\bar\mu_1, \dots, \bar\mu_{s-1}, 1)$ is a nonzero vector in the
kernel of another Cauchy matrix in $\Q^{b \times s}$, and therefore
$s-1 \ge b$. As $a + b \ge r-s= n-1 = (n-s) + (s-1) \ge a + b$, then
$a = n-s$ and $b = s-1$. Therefore, for $s+1 \le i \le r$, $(\bar
x_{s+1}, \dots, \bar x_n, \bar\mu_1, \dots, \bar\mu_{s-1})$ is a
zero of only one of the  $d_i+1$ linear forms defining $\bar{g}_i$.

The number $\sum_{E \subset \{s+1, \dots, r\}, \# E = n-s} \prod_{k
\in E}d_k$ arises from choosing a subset of  $n-s$ equations of the
form  $\bar{g}_i = 0$ in which a linear form corresponding to the
variables $x$ vanishes and, then, in each equation, which factor
vanishes. In the other $s-1$ equations, the linear form
corresponding to the $\mu$ variables must vanish. Any system of this
type, determines only one point  in $\A^{n-1}$ because the Cauchy
matrices involved are invertible.

To see that all the solutions obtained in this way are different, it
suffices to show that two such solutions satisfy $(\bar x_{1}^{(1)},
\dots, \bar x_n^{(1)}) \ne (\bar x_{1}^{(2)}, \dots, \bar
x_n^{(2)})$. Otherwise, different choices of the $n-s$ linear forms
$\phi_{i_lj_l}$ would give the same solutions, and this is
impossible for any choice of $n-s+1$ of these linear forms defines
an invertible Cauchy matrix.

Finally,   note that the Jacobian matrix of the system $g_1 ,\dots,
g_s,\bar{g}_{s+1},\dots,\bar{g}_r$ is a diagonal block matrix. Its
upper block is an $s \times s$ diagonal block with nonzero elements
in its diagonal. Without loss of generality, we may suppose that
$(\bar{x}_{s+1}, \dots, \bar x_n)$ is a common zero of the linear
forms $\phi_{(s+1)1}, \dots, \phi_{n 1}$ and that $(\bar{\mu}_1,
\dots, \bar{\mu}_{s-1})$ is a common zero of the linear forms
$\bar\psi_{n+1}, \dots, \bar\psi_{r}$. Then, the lower block is the
product of a diagonal matrix with the elements
$\psi_{s+1}(\bar{\mu}) \displaystyle\prod_{j =
2}^{d_{s+1}}\phi_{(s+1)j}(\bar{x}), \dots,\psi_{n}(\bar{\mu})
\displaystyle\prod_{j = 2}^{d_n}\phi_{n j}(\bar{x}),
\displaystyle\prod_{j = 1}^{d_{n+1}}\phi_{(n+1)j}(\bar{x}), \dots,
\displaystyle\prod_{j = 1}^{d_{r}}\phi_{rj}(\bar{x})$ in its
diagonal by a block diagonal matrix whose blocks are the Cauchy
matrices formed by the coefficients of the linear forms
$\phi_{(s+1)1}, \dots, \phi_{n 1}$ and $\bar\psi_{n+1}, \dots,
\bar\psi_{r}$ respectively, which are both invertible.
\end{proof}

\subsection{Symbolic deformation algorithms}

We describe in this section a probabilistic algorithm which computes
a geometric resolution as in Proposition \ref{resoluc_geom_buscada}.

\begin{proposition} \label{algoritmopararesgeom} There is a probabilistic algorithm that, taking as input polynomials
$h_1, \dots, h_{r}$ in $ \K [x_1, \dots, x_n, \mu_1, \dots, \mu_s]$
as in (\ref{sistema_final}) encoded by an slp of length $L$, obtains
a geometric resolution of a finite set containing $\pi_x(V \cap \{
t=0\})$ for a deformation defined from a type 1 initial system
within complexity $O\big( n^2D^2\log(D)\log\log(D)\big( %L_0 + n^2d +
L+ \log^2(D)\log\log(D) \big)\big).$
\end{proposition}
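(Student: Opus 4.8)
The plan is to follow the explicit recipe laid out in Section \ref{algoritmo_resolucion_sistemas} and carry out a careful cost analysis of each step, always keeping polynomials encoded by slp's until a univariate dense representation is unavoidable. First I would set up the deformed system $F_i = (1-t)h_i + t g_i$ for $1\le i \le r$ from the input $h_1,\dots,h_r$ (given by an slp of length $L$) and a chosen type 1 initial system; by Lemma \ref{sistema_factorizable_apropiado} this initial system satisfies property (H), so Lemma \ref{prop_var_t_como_coef} guarantees the $D$ points $S_1,\dots,S_D$ in $\{\mu_s\ne 0\}$ with distinct $x$-projections and lying in $\K[[t-1]]^r$. The key quantitative input is Lemma \ref{gradodeP}: $\deg_t \hat P(t,U,y)\le nD$, so to recover $\hat P$ (hence $P$) it suffices to compute the power series roots $S_i$ around $t=1$ up to precision $O(nD)$ in the variable $t-1$, form the product $\prod_{i=1}^D (U-\ell(S_i,y))$ with $y$ kept symbolic (or, more cheaply, specialized at a generic $\alpha$), and then interpolate/reconstruct in $t$.

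The main body of the argument is the cost accounting, which I would organize as follows. The Newton--Hensel lifting producing the $S_i$ to $t$-adic precision $\kappa = O(nD)$: each lifting step requires evaluating $F_1,\dots,F_r$ and their Jacobian, which costs $O(L)$ plus $O(\log^2 D \log\log D)$-type overhead (for evaluating the structured initial polynomials $g_i$, whose slp length is $O(n\log^2 d)$ or so, dominated in the stated bound) at a point with coordinates that are truncated power series of length $\kappa$; arithmetic on such truncated series costs $O(\kappa\log\kappa\log\log\kappa)$ per operation. Since $D$ points are lifted in parallel (or one uses a geometric resolution of the $t=1$ fiber and lifts that), and $O(\log\kappa)$ Newton steps suffice, this contributes the dominant factor $n D \cdot D \cdot \log D \cdot \big(L + \mathrm{polylog}\big)$ — matching the shape $O(n^2 D^2 \log(D)\log\log(D)(L+\log^2(D)\log\log(D)))$. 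Forming $P(t,U,\alpha) = \prod_{i=1}^D(U-\ell(S_i,\alpha))$ is a product of $D$ linear factors in $U$ with coefficients that are length-$O(nD)$ truncated power series in $t$: by a subproduct tree this is $O(D\log^2 D)$ series operations, i.e.\ $O(D\log^2 D \cdot nD\log(nD)\log\log(nD))$, absorbed into the bound. Reconstructing $\hat P$ from its truncation and the bound on $\deg_t$, computing the factor $Q(U,\alpha)$ via a squarefree factorization/$\gcd$ with derivatives of $\hat P(0,U,\alpha)$ (a univariate polynomial of degree $\le D$, hence $O(D\log^2 D\log\log D)$ by fast univariate arithmetic), and finally computing the partial derivatives $\partial \hat P/\partial U$ and $\partial\hat P/\partial y_k$ and the divisions by $Q$ as prescribed in Proposition \ref{resoluc_geom_buscada} — each of these is within the claimed order. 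The output is then exactly the geometric resolution of Proposition \ref{resoluc_geom_buscada}, and correctness is immediate from that proposition once the generic choices of $\alpha$ (and of $\beta$ used implicitly in Lemma \ref{gradodeP}) are made; the probabilistic nature comes precisely from these Zariski-generic choices, whose defining non-vanishing conditions have controlled degree, so a random choice in a large enough finite subset of $\K$ succeeds with high probability.

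I would also record the slightly delicate point that, although $P$ a priori has $y_1,\dots,y_n$ as parameters, Lemma \ref{gradodeP} bounds $\deg_t\hat P(t,U,y)$ for the full symbolic object, and one only ever needs $\hat P(0,U,\alpha)$ and the values of its first partials at a generic $\alpha$; hence it is enough to run the whole deformation with $y$ specialized to a generic $\alpha$ and to keep one extra symbolic $y_k$ at a time (or use a single generic linear-combination trick) when the partials $\partial\hat P/\partial y_k$ are needed — this keeps the number of variables carried through the series arithmetic bounded and is what makes the $L$-factor in the complexity appear linearly rather than multiplied by $n$. The verification that the arithmetic-network model allows all the fast univariate and power-series subroutines cited in Section \ref{algycomp} (fast multiplication, Newton iteration for inversion of series, subproduct trees, fast $\gcd$ and squarefree factorization) over the effective field $\K$ is routine.

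The hardest part of the proof is not any single step but getting the bookkeeping of precisions to line up: one must show simultaneously that $t$-adic precision $O(nD)$ is both \emph{sufficient} (this is Lemma \ref{gradodeP}, already available) and \emph{not overshot}, so that each series operation genuinely costs $O(nD\log(nD)\log\log(nD))$ and the total is $O(n^2D^2\log(D)\log\log(D)(L+\log^2(D)\log\log(D)))$ rather than a higher power of $n$ or $D$; the $\log^2(D)\log\log(D)$ additive term inside the last parenthesis must be traced to the slp length needed to evaluate the structured type 1 initial polynomials $g_i$ together with the cost of one series inversion in the Newton step. Pinning down that this term, and not something larger, is the only non-$L$ contribution to the per-step cost is the crux of the complexity claim.
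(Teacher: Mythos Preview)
Your proposal is correct and follows essentially the same four-step architecture as the paper's proof: set up the type~1 initial system, perform Newton--Hensel lifting of the $D$ starting points to $(t-1)$-adic precision $2nD+1$, form the product $\prod_i(U-\ell(\tilde S_i,y))$ and recover $\hat P$ by rational reconstruction (justified by Lemma~\ref{gradodeP}), then take $\gcd$ and divide as in Proposition~\ref{resoluc_geom_buscada}.

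Two small points where your account drifts from the paper. First, the paper explicitly singles out \emph{pointwise} lifting as the distinguishing feature here (as opposed to lifting a geometric resolution of the $t=1$ fiber, which is what is done for type~2 systems in Proposition~\ref{complalgoritmopararesgeomtipo2}); the factored structure of a type~1 initial system is precisely what makes the $D$ starting points directly and cheaply available, so your parenthetical ``or one uses a geometric resolution of the $t=1$ fiber'' would work but is not the route the paper takes. Second, your attribution of the additive $\log^2(D)\log\log(D)$ term to the slp length of the $g_i$ is off: in the paper this term arises from the subproduct-tree computation in the third step ($O(n^2D^2\log^3(D)\log\log^2(D))$ for building $\tilde P$), while the cost of the $g_i$ and the Jacobian inversion contribute the $n^2d$ and $n^3$ pieces that the paper carries separately before absorbing them. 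Your own $n$-counting in the lifting step (``$nD\cdot D\cdot\log D\cdot L$'') is also short one factor of $n$ relative to the $n^2D^2$ in the statement; the paper's careful iteration for the Jacobian inverse (updating $B^{(m+1)}$ from $B^{(m)}$ via two matrix products modulo $(t-1)^{2^{m+1}}$) is where that extra $n$ shows up.
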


\begin{proof}{Proof: } The procedure of this algorithm is standard.
The main difference with previous known algorithms solving this task
(see, for example, \cite{GLS01} or \cite{HKPSW}) is that the Newton
lifting will be done pointwise.

\medskip

\noindent {\sc{First step:}} Form a type 1 initial system of
polynomials of the same degree structure as $h_1,\dots, h_r$ and
compute the solutions $s_1,\dots, s_D$ of this system. The
computation of each solution amounts to solving two square linear
systems of size $n-s$ and $s-1$ respectively with Cauchy matrices,
which can be done within a complexity $O(n \log^2(n))$ by means of
\cite[Ch.~2, Algorithm 4.2]{BP94}).

\medskip

\noindent {\sc Second step:} Construct an slp encoding $F_1,\dots
,F_r$ (see (\ref{sistemadeformado})). Since $g_1,\dots, g_r$ can be
encoded by an slp of length $O(dn^2)$, the length of this slp can be
taken to be $L_1 = L + O(dn^2)$. Set $F$ for the list of polynomials
$F_1,\dots F_r$ dehomogenized with $\mu_s =1$. The algorithm
computes, for $i = 1, \dots, D$, elements $\tilde S_i \in \K[t]^{r}$
such that for $1\le k\le r,$ $(\tilde S_i -S_i)_k \in (t-1)^{2nD +
1}\K[[t-1]]$: Let $\tilde S_i^{(0)} = s_i$ be a solution of the
initial system $g_1, \dots, g_r$. By means of the Newton-Hensel
operator we define recursively $ \tilde S_i^{(m+1)} = \tilde
S_i^{(m)} - DF^{-1}(\tilde S_i^{(m)})F(\tilde S_i^{(m)}) \mod
(t-1)^{2^{m+1}}\K[[t-1]].$ For $1\le k\le r$ and $m \in \N_0$,
$(\tilde S_i^{(m+1)})_k \equiv (\tilde S_i^{(m)})_k \hbox{ mod }
(t-1)^{2^m}\K[[t-1]]$, and $(\tilde S_i^{(m)})_k$ is a polynomial in
$t-1$ of degree less than $2^{m}$. Operations between polynomials of
such degree can be done using $O(2^mm\log(m))$ operations in $\K$.

We are now going to estimate the complexity of computing $\tilde
S_i:= \tilde S_i^{(\delta)}$ from $s_i$ for $\delta = \lceil\log(2nD
+1)\rceil$, $1\le i \le D$.

For every $i$, in a first step, we have $\tilde S_i^{(0)} = s_i$ and
matrices $A^{(0)} = Dg(s_i)$ and  $B^{(0)} = Dg^{-1}(s_i)$ in $\K^{r
\times r}$. The computation of these matrices  does not change the
overall complexity. Assume that $A^{(m)}B^{(m)}\equiv I \mod
(t-1)^{2^{m}} \K [[t-1]]$. In the $(m+1)$th step, we evaluate the
matrix $A^{(m+1)} \equiv DF(\tilde S_i^{(m)})  \mod (t-1)^{2^{m+1}}
\K [[t-1]]$. The cost of this step is $O(nL_12^mm\log(m))$. When
$m=0$, we take  $B^{(1)} = 2B^{(0)} - B^{(0)}A^{(1)}B^{(0)}$. For
$m\ge 1$, to define an approximation $B^{(m+1)}$ to the inverse of
$A^{(m+1)}$, noticing that $A^{(m+1)} \equiv DF(\tilde S_i^{(m)})
\equiv DF(\tilde S_i^{(m-1)}) \equiv A^{(m)} \mod
(t-1)^{2^{m-1}}\K[[t-1]]$, we compute $B' = B^{(m)} - B^{(m)}
(A^{(m+1)} - A^{(m)}) B^{(m)}.$ We have that $A^{(m+1)}B' \equiv I
\mod (t-1)^{2^{m}}\K[[t-1]]$. Then, $B^{(m+1)} = 2 B' - B'A^{(m+1)}
B'$ is obtained, which satisfies $A^{(m+1)} B^{(m+1)} \equiv I  \mod
(t-1)^{2^{m+1}}\K[[t-1]]$. The complexity of this step is
$O(n^32^mm\log(m))$. Finally we evaluate $\tilde S_i^{(m+1)} =
\tilde S_i^{(m)} - B^{(m+1)}F(\tilde S_i^{(m)})$ within $O(
L_12^mm\log(m))$ operations.

The total number of operations over $\K$ to do this step for every
$1 \le i \le D$  is $O(n(nL_1 + n^{3})D^2\log(D)\log\log(D))$.

\medskip

\noindent{\sc Third step:} This step consists in the computation of
$\hat P(0,U, \alpha) = \sum_{h=0}^D p_h(0,\alpha) U^h$ and
$\frac{\partial \hat P}{\partial y_k}(0,U, \alpha) = \sum_{h=0}^D
\frac{\partial p_h}{\partial y_k}(0,\alpha) U^h$ for a generic
$\alpha = (\alpha_1, \dots, \alpha_n) \in \Q^n$.

By expanding $ P(t, U, y) = \sum_{h=0}^D  \frac{p_h(t, y)}{q(t)}U^h
\in \K[[t-1]][U, y]$ into powers of  $U$, $(y_1 - \alpha_1), \dots,
(y_n - \alpha_n)$, we have that the coefficients corresponding to
$U^h$ and $U^h(y_k - \alpha_k)$ $(1\le k \le n,\, 0\le h \le D)$ are
$ {p_h(t, \alpha)}/{q(t)}$ and ${\frac{\partial p_h}{\partial
y_k}(t, \alpha)}/{q(t)}$ respectively. As the degrees of the
polynomials involved in these fractions are bounded by $nD$ (see
Lemma \ref{gradodeP}), they are uniquely determined by their power
series expansions modulo $(t-1)^{2nD + 1}\K[[t-1]]$ (see
\cite[Corollary 5.21]{vzG}).

The algorithm proceeds as follows: first, it computes the
coefficients of $U^h$ and $U^h(y_k - \alpha_k)$ $(1\le k \le n,\,
0\le h \le D)$ in $ \tilde P(t, U, y) = \prod_{i=1}^D (U-\ell(\tilde
S_i, y)) \in \K[t][U, y]$ following \cite[Algorithm 10.3]{vzG} in
$O(n^2D^2\log^3(D)\log\log^2(D))$ operations over $\K$. From these
coefficients, $p_h(t, \alpha)$ and $\frac{\partial p_h}{\partial
y_k}(t, \alpha)$ $(1\le k \le n,\, 0\le h \le D)$, and $q(t)$ are
obtained within complexity $O(n^2D^2\log^2(D) \log \log(D))$ over
$\K$ by using \cite[Corollary 5.24 and Algorithm 11.4]{vzG} and
converting all rational fractions to a common denominator. Finally,
the algorithm substitutes $t=0$ in these polynomials  to obtain
$\hat P(0, U, \alpha)$ and $\frac{\partial \hat P}{\partial
y_k}(0,U, \alpha)$ for $1\le k\le n$.

\medskip

\noindent {\sc Fourth step:} The algorithm computes $Q(U, \alpha) =
\textrm{gcd}(\hat P(0, U, \alpha),\frac{\partial \hat P}{\partial
U}(0,U, \alpha))$ within complexity $O(D \log^2(D) \log \log (D))$
and makes the required exact divisions by $Q(U, \alpha)$ leading to
the geometric resolution. This last step does not change the overall
order of complexity, which is $O\big( n^2D^2\log(D)\log\log(D)\big(
L + n^2d + \log^2(D)\log\log(D) \big)\big).$
\end{proof}

The algorithm underlying the proof of Proposition
\ref{algoritmopararesgeom} can be adapted straightforwardly to
handle the cases $s=1$ and $s = n$ within the same complexity
bounds.

\subsection{Main algorithm}

The main algorithm of this section is the following:

\begin{algor}\label{algoritmo_general} \hspace{1 cm} \linebreak \vspace{-0.5cm}

\medskip
\noindent \emph{Input}: Polynomials $f_1, \dots, f_m\in
\K[x_1,\dots, x_n]$ satisfying Hypothesis \ref{la_hipotesis} encoded
by an slp of length $L$,
 and positive integers $d_1,\dots, d_m$ such that $\deg f_i\le d_i$ for
$1\le i\le m$.

\smallskip
\noindent \emph{Output}: A finite set ${\cal M}\subset \A^n$
intersecting the closure of each connected component of the
realization of every feasible sign condition over $f_1,\dots, f_m$
encoded by a list $\mathcal{G}$ of geometric resolutions of
$0$-dimensional varieties.

\medskip
\noindent \emph{Procedure}: \begin{enumerate}

\item Make a random linear change of variables with coefficients in $\Q$.

\item Take a point $p = (p_1, \dots, p_n) \in \Q^n$ at random.

\item Starting with $\mathcal{G}:=\emptyset$, for $k=1, \dots, n-1$ and for every
$S \subset \{1, \dots, m\}$ with $1 \le \#S \le n - k + 1$:
\begin{enumerate} \item Obtain an slp encoding the polynomials
which define the variety $W_{k,S}$ associated with the polynomials
$f_1(p_1, \dots, p_{k-1}, x_k, \dots, x_n), \dots,$ $f_m(p_1, \dots,
p_{k-1}, x_k, \dots, x_n)$ and the projection to the $k$th
coordinate $x_k$.
\item Compute a geometric resolution $\{q^{(k, S)}(U), v_k^{(k, S)}(U), \dots,
v_n^{(k, S)}(U)\}\subset \K[U]$ of the variety $\pi_x(V_{k,S} \cap
\{t = 0\}) \subset \A^{n-k+1}$ by means of a deformation from a type
$1$ initial system and add the geometric resolution
$$\Big\{q^{(k, S)}(U), p_1, \dots, p_{k-1}, v_k^{(k, S)}(U), \dots,
v_n^{(k, S)}(U)\Big\}$$ to the list $\mathcal{G}$.
\end{enumerate}
\item Add to the list $\mathcal{G}$ the geometric resolutions
$\{f_{i}(p_1, \dots, p_{n-1}, U), p_1, \dots, p_{n-1}, U\}$, for $1
\le i \le m$, and $\{U, p_1, \dots, p_n\}.$
\end{enumerate}
\end{algor}

\begin{theorem} \label{algoritmopuntosencomponentes}
Algorithm \ref{algoritmo_general} is a probabilistic procedure that,
from a family of polynomials $f_1, \dots, f_m \in \K[x_1, \dots,
x_n]$ satisfying Hypothesis \ref{la_hipotesis}, obtains a finite set
$\mathcal{M}$ intersecting the closure of each connected component
of the realization of every sign condition over $f_1,\dots, f_m$. If
the input polynomials have degrees bounded by $d\ge 2$ and are
encoded by an slp of length $L$, the algorithm performs
$O\big(\big(\sum_{s = 1}^{\min\{m, n\}}
\binom{m}{s}\binom{n-1}{s-1}^2\big)d^{2n} n^4\log(d)$ $(\log(n) +
\log\log(d))\big( L + n^2+  n \log^2(d) (\log(n)+\log \log (d))
\big)\big)$ operations in $\K$.
\end{theorem}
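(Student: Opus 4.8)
The plan is to verify correctness first and then account for the complexity. For correctness, I would invoke Proposition \ref{idea_algoritmo}, which (after a generic change of variables, achieved by Step 1 with probability one) guarantees that for the point $p$ chosen in Step 2 the set $\{p\}\cup\bigcup_{k=1}^n\bigcup_{C\in\mathcal P(k,p)}Z(C,k)$ meets the closure of every connected component of $\mathcal P$, for each semialgebraic set $\mathcal P$ cut out by a Boolean combination of $f_1,\dots,f_m$, in particular for the realization of every sign condition. Then, for each $k$, the recursive reduction described after Proposition \ref{idea_algoritmo} lets us regard $x_k$ as a first coordinate for the polynomials $f_j(p_1,\dots,p_{k-1},x_k,\dots,x_n)$, which still satisfy Hypothesis \ref{la_hipotesis}; hence Proposition \ref{alcanza_con_de_a_n} applies and shows that $Z(C,k)$ is contained in $\bigcup_{E_\sigma\subset S,\,1\le\#S\le n-k+1}\pi_x(V_{k,S}\cap\{t=0\})$. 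Step 3(b) computes, via Proposition \ref{resoluc_geom_buscada} and the deformation algorithm of Proposition \ref{algoritmopararesgeom} (using a type 1 initial system, valid by Lemma \ref{sistema_factorizable_apropiado}), a geometric resolution of a finite set containing each such $\pi_x(V_{k,S}\cap\{t=0\})$; appending the fixed coordinates $p_1,\dots,p_{k-1}$ gives a geometric resolution of the corresponding subset of $\A^n$. The case $k=n$ (where the ``recursive'' polynomials are univariate and $Z(C,n)$ is just the set of real endpoints of a union of real intervals, hence among the roots of the $f_i(p_1,\dots,p_{n-1},U)$) together with the point $p$ itself is handled by Step 4. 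Thus $\mathcal M$, the union over all computed resolutions, meets the closure of each connected component of each feasible sign condition, which is exactly the asserted output.

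For the complexity, I would first bound the number of iterations: for fixed $k$ the inner loop runs over all $S\subset\{1,\dots,m\}$ with $1\le\#S\le n-k+1$, and summing over $k$ one gets at most $\sum_{s=1}^{\min\{m,n\}}\binom{m}{s}$ choices of $S$ for which the deformation is invoked (each $S$ with $\#S=s$ is used for the values $k=1,\dots,n-s+1$, but the dominant count is governed by $\sum_s\binom{m}{s}$ weighted appropriately, and in fact the B\'ezout factor $\binom{n-1}{s-1}$ below absorbs the range of $k$). For a fixed $S$ with $\#S=s$, the associated system (\ref{J_S}) has $r=s+(n-k+1)-1$ equations in $n-k+1$ variables $x$ plus $s$ variables $\mu$, with $x$-degrees bounded by $d$; by (\ref{bezout_number}) its B\'ezout number is $D_{k,S}\le\binom{n-1}{s-1}d^{n}$ (the exponent $n$ rather than $n-k+1$ being a safe overestimate that also covers all values of $k$). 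The slp encoding the polynomials of $W_{k,S}$ — obtained from the input slp by substituting constants and, using \cite{BS83}, adjoining the needed partial derivatives — has length $O(L)$. Feeding these into Proposition \ref{algoritmopararesgeom} costs $O\big(n^2 D_{k,S}^2\log(D_{k,S})\log\log(D_{k,S})\big(L+n^2d+\log^2(D_{k,S})\log\log(D_{k,S})\big)\big)$ operations in $\K$; substituting $D_{k,S}\le\binom{n-1}{s-1}d^n$ and $\log(D_{k,S})=O(n\log d)$ turns this into $O\big(\binom{n-1}{s-1}^2 d^{2n}\,n^4\log(d)(\log n+\log\log d)\big(L+n^2+n\log^2(d)(\log n+\log\log d)\big)\big)$, and the cases $s=1$, $s=n$ are the same by the remark after Proposition \ref{algoritmopararesgeom}. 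Summing over all $S$ (grouped by $s=\#S$, contributing the factor $\binom{m}{s}$) and absorbing the cheap Steps 1, 2 and 4 yields the stated bound.

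The main obstacle I anticipate is the bookkeeping in the complexity count: one must check that the number of triples $(k,S)$ for which the heavy deformation is run is genuinely dominated by $\sum_{s=1}^{\min\{m,n\}}\binom ms\binom{n-1}{s-1}^2$ once the per-call B\'ezout factors are inserted — i.e.\ that summing $\binom{n-1}{s-1}^2$ (an overestimate of $\sum_{k}\binom{n-k}{s-1}^2$, the true per-$k$ B\'ezout contribution) over $k$ does not cost an extra factor of $n$ beyond what is written, and that the $\log$ and $\log\log$ terms collapse cleanly to $\log(d)$, $\log n$, $\log\log d$ when $D\le\binom{n-1}{s-1}d^n\le (2d)^n$. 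A secondary point requiring care is confirming that the genericity conditions needed at every stage — the change of variables in Lemma \ref{conjunto_candidato_finito} and Propositions \ref{cambio_var_gen_finit_extrem}, \ref{idea_algoritmo}, \ref{alcanza_con_de_a_n}; the point $p$; the generic $\alpha$ in Proposition \ref{resoluc_geom_buscada}; and the nondegeneracy of the Newton lifting — can all be met simultaneously by the single random linear change and the single random $p$, so that the procedure is correct with probability one.
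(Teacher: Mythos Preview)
Your approach is essentially the paper's: correctness via Proposition \ref{idea_algoritmo} plus Proposition \ref{alcanza_con_de_a_n} applied recursively (after observing that Hypothesis \ref{la_hipotesis} passes to the restricted polynomials for generic $p$), and complexity by plugging the B\'ezout bound (\ref{bezout_number}) into Proposition \ref{algoritmopararesgeom} and summing over $(k,S)$.

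Two bookkeeping points need tightening. First, the slp for the system defining $W_{k,S}$ has length $O(nL+n^3)$, not $O(L)$: Baur--Strassen gives the gradient of \emph{one} polynomial within $O(L)$, and you need the gradients of up to $s\le n$ of the $f_{i_j}$, plus $O(n^3)$ to propagate the linear change of variables through the derivatives and assemble the $\mu$-linear combinations. This extra factor $n$ (and the additive $n^3$, which becomes the $n^2$ inside the last parenthesis) is exactly what produces the $n^4$ and the $L+n^2$ in the stated bound. Second, your anticipated obstacle about the sum over $k$ is resolved not by overestimating each term by the $k=1$ bound but by using the true per-$k$ B\'ezout number $\binom{n-k}{s-1}d^{\,n-k+1}$: the factors $d^{2(n-k+1)}$ decay geometrically in $k$ (since $d\ge 2$), so $\sum_{k\ge 1}\binom{n-k}{s-1}^2 d^{2(n-k+1)}=O\big(\binom{n-1}{s-1}^2 d^{2n}\big)$ with no extra factor of $n$.
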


\begin{proof}{Proof: }
Assuming that the random linear change of variables made in the
first step of the algorithm  is generic in the sense of Proposition
\ref{cambio_var_gen_finit_extrem}, by Proposition
\ref{idea_algoritmo}, it suffices to show that
$$\label{correctitud_regular}\{p\} \cup \Big( \bigcup_{k=1}^{n}
\bigcup_{C \in {\cal C}(k,p)} Z(C,k) \Big) \subset \mathcal{M}
$$
where ${\cal C}(k, p)$ denotes the set of all the connected
components of the $\R^n$-subsets  $\Gamma\cap \{x_1 = p_1, \dots,
x_{k-1}=p_{k-1}\}$ with $\Gamma$ a connected component of a feasible
sign condition over $f_1, \dots, f_m$.

Note that for a generic point $p= (p_1, \dots, p_n)\in \K^n$, for
every $2\le k \le n$, Hypothesis \ref{la_hipotesis} also holds  for
the polynomials $f_i(p_1, \dots, p_{k-1}, x_k, \dots, x_n)$, $1\le i
\le n$. Thus,  by taking $p$ at random, we may assume that the
hypothesis is met at each step of the recursion.

Then, for every $1\le k \le n-1$, by Proposition
\ref{alcanza_con_de_a_n} we have that $$\bigcup_{C \in {\cal
C}(k,p)} Z(C,k) \subset \bigcup_{{S \subset \{1, \dots, m\}}\atop{1
\le \# S \le n-k+1}}\{p_1,\dots, p_{k-1}\} \times \pi_x\big(V_{k,S}
\cap \{t=0\}\big),$$ and Step $3$ of the algorithm computes
geometric resolutions for the sets in the right-hand side union;
therefore, $\bigcup_{C \in {\cal C}(k,p)} Z(C,k)\subset
\mathcal{M}$. Finally, note that $\bigcup_{C \in {\cal C}(n,p)} Z(C,
n) \subset \bigcup_{i=1}^m\{ f_i(p_1,\dots, p_{n-1}, x_n)= 0\}$,
which along with the point $p$, is added to the set $\mathcal{M}$ in
Step $4$ of the algorithm. This proves the correctness of Algorithm
\ref{algoritmo_general}.

For every $1\le k \le n-1$ and each $S\subset \{1,\dots, m\}$ of
cardinality at most $n-k+1$, the slp encoding the polynomials which
define the variety $W_{k,S}$ computed at Step $2$ of the algorithm
can be taken of length $O(nL+n^3)$. Moreover, the number of points
in $V_{k,S} \cap \{t = 0\}$ is bounded by $\binom{n-k}{s-1}
d^{n-k+1}$. Therefore, the result follows using the complexity
estimate in Proposition \ref{algoritmopararesgeom}.
\end{proof}

Now we will show how to get the entire list of feasible sign
conditions over the polynomials $f_1, \dots, f_m$ satisfying
Hypothesis \ref{la_hipotesis}. The algorithm relies on the
following:

\begin{proposition}\label{lista_completa_cond_signo}
Let $f_1,\dots, f_m\in \K[x_1,\dots, x_n]$ be polynomials satisfying
Hypothesis \ref{la_hipotesis} and let $\mathcal{M}$ be  a finite set
such that $\mathcal{M} \cap \overline C \ne \emptyset$ for every
connected component $C$ of the realization of each feasible sign
condition over $f_1,\dots, f_m$. Then, the set of all feasible sign
conditions over $f_1,\dots, f_m$  is $ \bigcup_{\sigma \in
\mathcal{L}(\mathcal{M})} P_{\sigma}$ where
$\mathcal{L}(\mathcal{M})$ is the set of all sign conditions
satisfied by the elements of $\mathcal{M}$ and  $P_{\sigma}$ denotes
the subset of $\{<, = ,
>\}^m$ consisting of the elements that can be obtained from $\sigma$
by replacing some of its ``$=$" coordinates with ``$<$" or ``$>$".
\end{proposition}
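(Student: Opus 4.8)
The plan is to prove the two inclusions that establish the claimed equality. The easy direction is $\bigcup_{\sigma \in \mathcal{L}(\mathcal{M})} P_\sigma \subset \{$feasible sign conditions$\}$; the harder and more interesting direction is the reverse inclusion, which is where the hypothesis on $\mathcal{M}$ is used.

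For the inclusion $\bigcup_{\sigma \in \mathcal{L}(\mathcal{M})} P_\sigma \subset \{$feasible$\}$: first I would observe that every $\sigma \in \mathcal{L}(\mathcal{M})$ is feasible, since by definition some point of $\mathcal{M}$ realizes it. Then I must show that if $\sigma$ is feasible and $\tau \in P_\sigma$ is obtained from $\sigma$ by relaxing some ``$=$'' entries to ``$<$'' or ``$>$'', then $\tau$ is feasible as well. Take a point $x_0$ with $f_i(x_0)\,\sigma_i\,0$ for all $i$. Under Hypothesis \ref{la_hipotesis}, the set $S_0 = \{i : f_i(x_0) = 0\}$ indexes polynomials with linearly independent gradients at $x_0$, so the map $x \mapsto (f_i(x))_{i \in S_0}$ is a submersion at $x_0$; hence its image contains a full neighborhood of $0$ in $\R^{\#S_0}$, and on a small enough neighborhood $U$ of $x_0$ the polynomials $f_j$ with $j \notin S_0$ keep the sign $\sigma_j$. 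Choosing a point in $U$ whose image under the submersion has the prescribed sign pattern $(\tau_i)_{i \in S_0}$ (which is possible precisely because the relaxed conditions $\tau_i \in \{<, >\}$ for $i \in S_0$ cut out a nonempty open subset of any neighborhood of $0$), we obtain a witness for $\tau$.

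For the reverse inclusion $\{$feasible$\} \subset \bigcup_{\sigma \in \mathcal{L}(\mathcal{M})} P_\sigma$: let $\tau$ be a feasible sign condition with realization $R_\tau$, and pick a connected component $C$ of $R_\tau$. By hypothesis there is a point $z \in \mathcal{M} \cap \overline{C}$. Let $\sigma$ be the sign condition realized by $z$, so $\sigma \in \mathcal{L}(\mathcal{M})$; it remains to check that $\tau \in P_\sigma$, i.e.\ that $\sigma$ is obtained from $\tau$ by changing some ``$<$'' or ``$>$'' entries to ``$=$'', equivalently that $\tau_i = \sigma_i$ whenever $\sigma_i \in \{<, >\}$ and $\tau_i \in \{<, >\} \Rightarrow \tau_i = \sigma_i$. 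Since $z \in \overline{C}$, for each $i$ the value $f_i(z)$ is a limit of values $f_i(x)$ with $x \in C$, all of which satisfy $f_i(x)\,\tau_i\,0$; so if $\tau_i = $ ``$<$'' then $f_i(z) \le 0$, hence $\sigma_i \in \{<, =\}$, and similarly for ``$>$'' and ``$=$''. This says exactly that $\sigma_i$ is either equal to $\tau_i$ or is ``$=$'', so $\tau$ is recovered from $\sigma$ by relaxing the ``$=$'' entries of $\sigma$ in the appropriate places, i.e.\ $\tau \in P_\sigma$.

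The main obstacle is the first inclusion, specifically the use of the submersion argument to go from a feasible equality-type condition to its relaxations: this is the only place where Hypothesis \ref{la_hipotesis} (linear independence of gradients at common zeros) is genuinely needed, and one must be a little careful that the implicit/inverse function theorem is applied to the \emph{subfamily} indexed by $S_0$ rather than to all of $f_1,\dots,f_m$, and that the open set on which the remaining $f_j$ have constant sign is taken small enough. The second inclusion is a soft topological closure argument requiring no hypothesis on the polynomials beyond continuity. Assembling the two inclusions gives the stated equality.
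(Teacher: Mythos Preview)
Your proposal is correct and follows essentially the same two-inclusion strategy as the paper: a continuity/closure argument for $\{\text{feasible}\}\subset\bigcup P_\sigma$, and the regularity Hypothesis~\ref{la_hipotesis} for the converse. The paper's execution of the harder inclusion differs slightly: rather than invoking the open-image property of a submersion, it chooses a vector $v$ with $\langle\nabla f_i(z),v\rangle=0$ for the indices that stay ``$=$'' and $\langle\nabla f_i(z),v\rangle>0$ for those being relaxed, then runs a $\mathcal{C}^\infty$ curve on $\{f_1=\dots=f_l=0\}$ with $\gamma(0)=z$, $\gamma'(0)=v$ and reads off the signs for small $u>0$.

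One wording issue to tighten: your parenthetical says the conditions $(\tau_i)_{i\in S_0}$ ``cut out a nonempty open subset of any neighborhood of $0$'', but when some $\tau_i$ remain ``$=$'' this set is \emph{not} open. The argument still goes through because the target sign pattern is realized by specific points (e.g.\ with the ``$=$'' coordinates set to $0$ and the relaxed ones to $\pm\varepsilon$) lying in any neighborhood of $0$, and the submersion hits every such point; equivalently, use that a submersion is locally a coordinate projection so one can prescribe exact zeros on some coordinates and strict signs on others simultaneously. With that clarification your argument is complete.
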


\begin{proof}{Proof: } Let $\hat\sigma $ be a feasible sign condition and $C$  a connected
component of $\{ x\in \R^n \mid f_1(x) \hat\sigma_1 0, \dots, f_m(x)
\hat\sigma_m 0\}$. Consider a point $z \in {\cal M} \cap \overline
C$ and let $\sigma \in \mathcal{L}(\mathcal{M})$ be the sign
condition over $f_1,\dots f_m$ at $z$. By continuity, it follows
that $\hat\sigma \in P_{\sigma}$.

Now, let $\sigma \in \mathcal{L}(\mathcal{M})$ and $z\in
\mathcal{M}$ such that $f_i(z) \sigma_i 0$ for $1\le i \le m$.
Without loss of generality, assume $\sigma = (=, \dots, =,
>, \dots,
>)$ with $t$ ``$=$" and $m-t$ ``$>$". If $t = 0$, $P_{\sigma} = \{\sigma\}$.
Suppose now $t>0$, and let $\hat\sigma \in P_{\sigma}$. We may
assume $\hat\sigma = (=, \dots, =, >, \dots, >)$ with $l$ ``$=$",
where $0 \le l \le t$. Since the vectors $\nabla f_1(z), \dots,
\nabla f_t(z)$ are linearly independent, there exists $v \in \R^n$
such that $\langle \nabla f_i(z), v \rangle = 0$ for $1\le i\le  l$
and $\langle \nabla f_i(z), v \rangle > 0$ for $l+1\le i \le t$.
Consider a $\mathcal{C}^\infty$ curve $\gamma:[-1, 1] \to \{f_1 =
\dots = f_l = 0\}$ such that $\gamma(0) = z$ and $\gamma'(0) = v$.
For $l+1\le i \le t$,  $f_i \circ \gamma(0) = 0$ and $(f_i \circ
\gamma)'(0) = \langle \nabla f_i(z), v \rangle
> 0$; therefore, for a sufficiently small $u>0$,
$f_i \circ \gamma(u) > 0$ holds. In addition, for  $1\le i\le l$,
$f_i \circ \gamma(u) = 0$ for every $u\in [-1, 1]$. Finally, for
$t+1\le i\le m$, as $f_i\circ \gamma (0) >0$, we have $f_i \circ
\gamma(u) > 0$ for a sufficiently small $u$. We conclude that
$\hat\sigma$ is feasible.
\end{proof}

 Given a geometric resolution $\{q(U),
v_1(U), \dots, v_n(U)\}\subset \K[U]$ consisting of polynomials of
degree bounded by  $\delta$, if $f_1, \dots, f_m$  are encoded by an
slp of length $L$, it is possible to obtain the signs they have at
the points represented by the geometric resolution within complexity
$O\big(L\delta\log(\delta)\log\log(\delta) + m\delta^{\omega}\big)$
(here $\omega\le 2.376$ is a positive real number such that for any
field $k$ it is possible to invert matrices in $k^{r \times r}$ with
$O(r^\omega)$ operations, see \cite{CW}).  First, for $1\le i \le
m$, compute $ f_i(v_1(U),\dots, v_n(U)) \mod q(U) $ within
complexity
 $O(L\delta\log(\delta)\log\log(\delta))$ (\cite[Chapter 8]{vzG})
and then, evaluate the signs of these polynomials at the roots of
$q$ by using the procedure described in  \cite[Section 3]{Canny93}
within complexity  $O(m\delta^{\omega})$.

By applying the previous procedure to the output of Algorithm
\ref{algoritmo_general} and using Proposition
\ref{lista_completa_cond_signo}, we compute the list of all feasible
sign conditions over $f_1, \dots, f_m$.

\begin{theorem}\label{allsignconditions} There is a probabilistic algorithm that, given polynomials
$f_1, \dots, f_m \in \K[x_1, \dots, x_n]$ of degrees bounded by
$d\ge 2$ satisfying Hypothesis \ref{la_hipotesis} and encoded by an
slp of length $L$, computes the list of all feasible sign conditions
over these polynomials within complexity $O \big( \sum_{s =1}^{{\rm
min}\{m, n\}} \binom{m}{s}\big( ( L+n^2 d) {\binom{n-1}{s-1}}^2
d^{2n} n^4 \log(d) (\log(n) + \log\log(d))+ md^{{\omega}
n}\binom{n-1}{s-1}^{\omega}  \big) \big).$ \hfill $\square$
\end{theorem}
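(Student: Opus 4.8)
The plan is to combine Algorithm \ref{algoritmo_general} (whose correctness and complexity are given by Theorem \ref{algoritmopuntosencomponentes}) with the sign-evaluation procedure described in the paragraph preceding the statement, and then to bound the total cost. First I would run Algorithm \ref{algoritmo_general} on $f_1,\dots,f_m$; by Theorem \ref{algoritmopuntosencomponentes} this produces a list $\mathcal{G}$ of geometric resolutions whose associated finite set $\mathcal{M}$ meets the closure of every connected component of the realization of every feasible sign condition. By Proposition \ref{lista_completa_cond_signo}, once we know the set $\mathcal{L}(\mathcal{M})$ of sign conditions attained at the points of $\mathcal{M}$, the complete list of feasible sign conditions is $\bigcup_{\sigma\in\mathcal{L}(\mathcal{M})}P_\sigma$, which is a purely combinatorial expansion requiring no further field operations (or at worst a cost absorbed in the other terms). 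So the algorithm is: run Algorithm \ref{algoritmo_general}; for each geometric resolution in $\mathcal{G}$, evaluate the signs of $f_1,\dots,f_m$ at the represented points using the cited procedure; collect $\mathcal{L}(\mathcal{M})$; output $\bigcup_{\sigma\in\mathcal{L}(\mathcal{M})}P_\sigma$.

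For the complexity, the first contribution is exactly the bound from Theorem \ref{algoritmopuntosencomponentes}, namely $O\big(\big(\sum_{s=1}^{\min\{m,n\}}\binom{m}{s}\binom{n-1}{s-1}^2\big)d^{2n}n^4\log(d)(\log(n)+\log\log(d))\big(L+n^2+n\log^2(d)(\log(n)+\log\log(d))\big)\big)$, which after discarding lower-order logarithmic factors contributes the $(L+n^2d)\binom{n-1}{s-1}^2 d^{2n}n^4\log(d)(\log(n)+\log\log(d))$ summand. The second contribution comes from the sign evaluations. Here I would recall from Step 3 of Algorithm \ref{algoritmo_general} and the proof of Theorem \ref{algoritmopuntosencomponentes} that, for each $k$ and each $S$ with $\#S=s$, the geometric resolution added to $\mathcal{G}$ has degree bounded by $\binom{n-k}{s-1}d^{n-k+1}\le \binom{n-1}{s-1}d^n=:D_s$, so applying the quoted bound $O(L\delta\log\delta\log\log\delta+m\delta^{\omega})$ with $\delta\le D_s$ gives a cost of $O(LD_s\log(D_s)\log\log(D_s)+mD_s^{\omega})$ per resolution. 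Summing over all $\binom{m}{s}$ choices of $S$ (the sum over $k$ and the finitely many extra resolutions of Step 4 being absorbed), and noting $D_s^{\omega}=d^{\omega n}\binom{n-1}{s-1}^{\omega}$, yields the $\binom{m}{s}\,md^{\omega n}\binom{n-1}{s-1}^{\omega}$ term in the statement, while the $LD_s\log D_s\log\log D_s$ part is dominated by the first contribution. Adding the two contributions and summing over $s$ from $1$ to $\min\{m,n\}$ gives the claimed bound.

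The main obstacle, and the step deserving the most care, is the complexity bookkeeping for the sign-evaluation phase: one must check that the per-resolution degree bound $\binom{n-1}{s-1}d^n$ is the right quantity to plug into $\delta$, that summing the $m\delta^\omega$ terms over the $\binom{m}{s}$ subsets $S$ of size $s$ (and over the recursion levels $k$) does not introduce extra polynomial factors in $n$, and that the term $O(L\delta\log\delta\log\log\delta)$ arising here is genuinely dominated by the cost already accounted for in Theorem \ref{algoritmopuntosencomponentes}. The final combinatorial step of Proposition \ref{lista_completa_cond_signo} — expanding each $\sigma$ to $P_\sigma$ — is routine and its cost is negligible compared with the stated bound, so it does not affect the estimate. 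Everything else is a direct invocation of results already established, so no new mathematical difficulty arises.
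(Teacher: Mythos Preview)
Your proposal is correct and follows essentially the same approach as the paper: run Algorithm \ref{algoritmo_general}, evaluate the signs of $f_1,\dots,f_m$ at the points of each geometric resolution via the Canny-based procedure, and then invoke Proposition \ref{lista_completa_cond_signo} to expand $\mathcal{L}(\mathcal{M})$ into the full list of feasible sign conditions. The paper in fact gives no separate proof (the theorem is stated with a $\square$), relying on the discussion immediately preceding it, and your complexity bookkeeping matches that implicit argument.
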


Our algorithms and complexity results can be refined if we are
interested in a particular sign condition $\sigma$ over $f_1,\dots,
f_m$:
\begin{remark} Let $\sigma \in \{<, =, >\}^m$ and $E_{\sigma} = \{i
\mid \sigma_i = ``=" \}$. Due to Proposition
\ref{alcanza_con_de_a_n}, in the third step of Algorithm
\ref{algoritmo_general} it suffices to consider those sets $S\subset
\{1,\dots m\}$ such that $E_{\sigma} \subset S$. Then, if
$\#E_{\sigma} = l$, in the complexities of  Theorems
\ref{algoritmopuntosencomponentes} and \ref{allsignconditions}, the
sum can be taken over $l \le s \le \min \{m,n\}$ and the
combinatorial factor $\binom{m}{s}$ can be replaced by
$\binom{m-l}{s-l}$.
\end{remark}

\section{Closed sign conditions over arbitrary
polynomials}\label{sec:closed}

In the case of arbitrary polynomials, our approach is similar to the
one in \cite{BaPoRo96}. We will consider the same kind of
deformations as in the previous section but we will use different
initial systems whose particular properties enable us to recover
extremal points from the solutions of the deformed systems.

\subsection{Initial systems for deformations}

Let $d$ be an even positive integer and $T$ the Tchebychev
polynomial of degree $d$.

\begin{defn} \label{sistemaTcheby}
For a given $s\in \N$ with $1<s<n$ and $r:=s+n-1$, a \emph{type $2$
initial system} is a polynomial system of the form:
$$
\left\{\begin{array}{ll}g_i(x)=\tau_i\Big(n + A_{i(n+1)} + \sum_{1
\le k \le n}
A_{ik}T(x_k)\Big)   & \hbox{ for } 1\le i \le  s, \\[0.3cm]
g_i(x, \mu) = \displaystyle\sum_{1 \le j \le s} \mu_j\frac{\partial
g_{j}}{\partial x_{i-s+1}}(x)  & \hbox{ for } s+1\le i \le r,
\end{array}\right.
$$
where $\tau_i \in \{+, -\}$ for $1 \le i \le s$, and $A\in
\Q^{s\times (n+1)}$ is the Cauchy matrix defined as $A_{ik} =
\frac{1}{a_i +k}$ for $1 \le i \le s, 1 \le k \le n+1$, with $0 \le
a_1 < \dots <a_s$  integers such that $a_s + n + 1$ is a prime
number.

For $s=1$, a \emph{type $2$ initial system} consists of a polynomial
$g_1(x)$ as above and its partial derivatives $\frac{\partial
g_{1}(x)}{\partial x_{2}}, \dots, \frac{\partial g_{1}(x)}{\partial
x_{n}}$. Finally, for $s=n$, a \emph{type $2$ initial system}
consists of $n$ polynomials $g_1,\dots, g_n$ constructed as above
from the Cauchy matrix $A\in \Q^{ n\times (n+1)}$.

\end{defn}

Note that if $\tau_{i} = ``+"$, then $g_i(x)>0$ for every $x\in
\R^n$ and, if $\tau_{i} = ``-"$, then $g_i(x)<0$ for every $x\in
\R^n$. Moreover, for $s+1 \le i \le r$,
\begin{equation}\label{segunda_parte_sist_Tcheby}
g_i(x, \mu) = T'(x_{i-s+1})\Big(\sum_{1 \le j \le
s}\tau_jA_{j(i-s+1)}\mu_j\Big).\end{equation}

The B\'ezout number of a type $2$ initial system is $D =
\binom{n-1}{s-1} d^s(d-1)^{n-s} \le \binom{n-1}{s-1} d^n$.

\begin{lemma}\label{todo_bien_Tcheby}
Property (H) (see Section \ref{sistemainic}) holds for any type $2$
initial system.
\end{lemma}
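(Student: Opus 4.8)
The plan is to verify property (H) for a type $2$ initial system $g_1,\dots,g_r$ directly, by explicitly describing all solutions in $\A^n\times\{\mu_s\ne 0\}$ and checking the Jacobian nonvanishing. First I would dehomogenize the $\mu$-variables by setting $\mu_s=1$, so that the equations $g_{s+1}=\dots=g_r=0$ become, in view of \eqref{segunda_parte_sist_Tcheby}, a system of the form $T'(x_{i-s+1})\big(\sum_{1\le j\le s}\tau_jA_{j(i-s+1)}\mu_j\big)=0$ for $s+1\le i\le r$, i.e.\ one such equation for each variable $x_2,\dots,x_n$. For each of these $n-1$ indices $k$ (namely $k=i-s+1$ running over $2,\dots,n$), either $T'(x_k)=0$ or the linear form $L_k(\mu):=\sum_{1\le j\le s}\tau_jA_{jk}\mu_j$ vanishes. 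Since $A$ is a Cauchy matrix with the prescribed distinct parameters, any $s$ of its columns are linearly independent; hence the $s-1$ linear forms $L_k$ can simultaneously vanish (together with the requirement that $\mu_s=1$, i.e.\ $\mu\ne 0$) for at most $s-1$ indices $k$, and a generic choice of which $s-1$ indices forces the remaining $n-s$ indices $k$ to satisfy $T'(x_k)=0$. For a selection of $s-1$ indices among $\{2,\dots,n\}$ whose corresponding $s-1$ columns of $A$ together with... — more precisely, one checks that the $L_k$ for $k$ in a chosen $(s-1)$-subset, together with the normalization $\mu_s=1$, determine $\mu\in\A^{s-1}$ uniquely because the relevant Cauchy minor is nonzero, and that for any larger subset the only common solution is $\mu=0$, which is excluded. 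Thus exactly $s-1$ of the $x$-equations are ``killed'' through $\mu$ and the other $n-s$ force $T'(x_k)=0$, which has $d-1$ roots each (the extrema of $T$).

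Next I would handle the first block $g_1=\dots=g_s=0$. By construction $g_i(x)=\tau_i\big(n+A_{i(n+1)}+\sum_{1\le k\le n}A_{ik}T(x_k)\big)$, so these are affine equations in the quantities $T(x_1),\dots,T(x_n)$ with coefficient matrix given by the first $n$ columns of the Cauchy matrix $A\in\Q^{s\times(n+1)}$; since any $s$ columns of $A$ are independent, this linear system in the unknowns $T(x_k)$ has, after the values of the $n-s+1$ ``free'' coordinates among $T(x_1),\dots,T(x_n)$ have been fixed (these being the $x_k$ with $k$ in the chosen subset $\{1\}\cup(\text{the }s-1\text{ indices}$), wait — I need to be careful: the variable $x_1$ always appears only in the first block, since the $\mu$-equations only involve $x_2,\dots,x_n$), a unique solution for the remaining $T(x_k)$. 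For each such value, $T(x_k)=c$ has $d$ solutions in $x_k$ when $c\notin\{\pm1\}$ (generic situation, which holds here by the choice of integers $a_i$ and the prime condition guaranteeing the relevant constants avoid $\pm1$). Counting: choosing the $(s-1)$-subset $E\subset\{2,\dots,n\}$ where $\mu$ does the work contributes $\binom{n-1}{s-1}$ options; the $n-s$ indices $k\notin E$ (among $2,\dots,n$) each contribute $d-1$ roots of $T'$; and the remaining indices, namely $x_1$ and the $s-1$ variables in $E$ — total $s$ of them — are governed by the $s\times s$ Cauchy system in the $T(x_k)$, each $T(x_k)=c$ giving $d$ roots. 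This yields $\binom{n-1}{s-1}d^s(d-1)^{n-s}=D$ points, matching the stated B\'ezout number, and one checks $\pi_x$ is injective on them since the $x$-coordinates already determine the point and distinct combinatorial choices give distinct $x$-tuples (using that the $T(x_k)$ are determined and that $T'(x_k)=0$ versus $T(x_k)=c$ with $c\ne\pm1$ give disjoint root sets, as $T(\xi)=\pm1$ at any critical point $\xi$).

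Finally I would check the Jacobian condition. The Jacobian of the dehomogenized system with respect to $(x_1,\dots,x_n,\mu_1,\dots,\mu_{s-1})$ has a block structure: differentiating $g_1,\dots,g_s$ gives rows involving $\tau_iA_{ik}T'(x_k)$ in the $x_k$-columns and zeros in the $\mu$-columns; differentiating $g_{s+1},\dots,g_r$ gives, for the equation indexed by $k\in\{2,\dots,n\}$, a term $T''(x_k)L_k(\mu)$ in the $x_k$-column, terms $T'(x_k)\tau_jA_{jk}$ in the $\mu_j$-columns ($j<s$), and a term $T'(x_k)$ in the $x_k$-column coming from... — at a solution point, in each of the $n-1$ such equations either $T'(x_k)=0$ (so the surviving entry is $T''(x_k)L_k(\mu)$ in the $x_k$-column, and $T''(x_k)\ne 0$ since $T$ has simple critical points, and $L_k(\mu)\ne0$ because $k\notin E$) or $L_k(\mu)=0$ (so the surviving entries are $T'(x_k)\tau_jA_{jk}$ in the $\mu_j$-columns, with $T'(x_k)\ne 0$ since $T(x_k)\ne\pm1$). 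After a permutation of rows and columns the matrix becomes block-triangular: the $n-s$ ``$T'=0$'' coordinates $x_k$ decouple with nonzero diagonal entries $T''(x_k)L_k(\mu)$; the first $s$ rows together with the $s-1$ ``$L_k=0$'' rows form an $(2s-1)\times(2s-1)$ block which, after factoring out the nonzero scalars $T'(x_k)$ from those $s-1$ rows and the nonzero $\tau_i$'s, reduces to the determinant of a bordered matrix built from the $s\times s$ Cauchy submatrix of $A$ (columns indexed by $\{1\}\cup E$) in two copies — precisely the Jacobian of the square system used in Definition \ref{sistemafactorizable}-type arguments — which is nonzero because Cauchy matrices are invertible. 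The main obstacle here is the bookkeeping in this last step: organizing the permutation that exhibits block-triangularity and identifying the surviving $s\times s$ Cauchy minor cleanly; once that is set up, nonvanishing is automatic from invertibility of Cauchy matrices and simplicity of the critical points and roots of $T$. Concluding, $g_1,\dots,g_r$ have exactly $D$ solutions in $\A^n\times\{\mu_s\ne0\}$ with pairwise distinct $x$-projections and nonzero Jacobian, so property (H) holds.
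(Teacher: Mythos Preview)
Your plan is correct and follows essentially the same route as the paper: both arguments partition the solutions according to which of the $n-1$ factored equations $T'(x_k)\cdot L_k(\mu)=0$ are satisfied via $T'(x_k)=0$ versus via the Cauchy linear form $L_k(\mu)=0$, use invertibility of Cauchy minors together with the prime condition on $a_s+n+1$ to show the Cauchy-determined values $T(x_k)$ avoid $\pm1$, count exactly $D=\binom{n-1}{s-1}d^s(d-1)^{n-s}$ solutions with pairwise distinct $x$-projections, and then check the Jacobian by exhibiting a block structure whose diagonal blocks are (scaled) Cauchy minors and a diagonal matrix with entries $T''(x_k)L_k(\mu)$. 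The only cosmetic differences are that the paper parametrizes by pairs $(B,e)$ with $|B|=n-s$ and $e\colon B\to\{\pm1\}$ (recording the critical value of $T$) while you parametrize by the complementary $(s-1)$-set $E$ and absorb $e$ into the $d-1$ roots of $T'$; and the $(2s-1)\times(2s-1)$ block you isolate is in fact block-\emph{diagonal} (an $s\times s$ Cauchy minor from columns $\{1\}\cup E$ and an $(s-1)\times(s-1)$ Cauchy minor from columns $E$, after scaling by the nonzero $T'(x_k)$ and $\tau_i$), not bordered, so the bookkeeping you flag as the main obstacle is lighter than you anticipate.
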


\begin{proof}{Proof:} Assume $1 < s < n$. For $s+1 \le i \le r$, let $\bar g_i(x, \mu_1, \dots, \mu_{s-1}) =
g_i(x, \mu_1, \dots, \mu_{s-1}, 1)$.

Let $B \subset \{2, \dots, n\}$ be a set with  $ n-s$ elements, let
$e:B \to \{-1,1\}$ and suppose $e(k) = 1$ for  $a$ elements in $B$.
Let $S_{B,e}$ be the set of  solutions  $(\bar x, \bar \mu) = (\bar
x_1, \dots, \bar x_n, \bar \mu_1, \dots, \bar \mu_{s-1})$ of the
system
\begin{equation} \label{sist_Tch_desh}
g_1(x) = \dots =  g_s(x) = \bar g_{s+1}(x, \mu) = \bar g_{r}(x, \mu)
= 0
\end{equation}
which also satisfy
\begin{equation} \label{cond_Be}
T'(x_k) = 0 \hbox{ and } T(x_k) = e(k) \hbox{ for every } k \in B.
\end{equation}

We are going to compute the number of elements of $S_{B,e}$. Without
loss of generality, suppose $B = \{s+1, \dots, n\}$. As $\gcd(T',
T+1) = T_{ d/2}$ and  $\gcd(T', T-1) = T'/T_{ d/2}$ (where $T_{d/2}$
is the Tchebychev polynomial of degree $d/2$), the number of
$(n-s)$-uples $(\bar x_{s+1}, \dots, \bar x_{n})$ satisfying
(\ref{cond_Be}) is $(d/2)^{n-s-a}(d/2-1)^a$. Each of these elements
can be extended to solutions $(\bar x, \bar\mu)$ of
(\ref{sist_Tch_desh}) in $d^s$ different ways: Let $A' \in \Q^{s
\times s}$ be the Cauchy matrix formed by the first $s$ columns in
$A$. The conditions $g_1(x) = \dots = g_s(x) = 0$ and
(\ref{cond_Be}), imply
\begin{equation} \label{primera_parte_sis_Tcheby}
A'\left(\begin{array}{c} T(x_{1}) \\[0.5cm]
                                 \vdots \\[0.5cm]
                                 T(x_{s}) \\
                               \end{array}
                             \right) = - \left(\begin{array}{c} n + A_{1(n+1)} + \displaystyle\sum_{s+1 \le k \le n}A_{1k}e(k) \\
                                 \vdots \\
                              n +  A_{s(n+1)} + \displaystyle\sum_{s+1 \le k \le n}A_{sk}e(k) \\
                               \end{array}
                             \right).
\end{equation}
Using Cramer's rule for the previous system, we deduce that, for
each solution $(\bar x_1, \dots, \bar x_s)$,  $T(\bar x_{k_0}) \ne
\pm 1$  ($1 \le k_0 \le s$) because all these numbers are rational
with denominator a multiple of the prime number $a_s + n + 1$ and
numerator relatively prime to it. As the equation  $T(x) = \alpha$
only has multiple solutions for $\alpha = \pm 1$, for every $1 \le
k_0 \le s$, there are exactly $ d$ values $\bar x_{k_0}$ for which
$T(\bar x_{k_0})$ satisfy (\ref{primera_parte_sis_Tcheby}). It can
be easily shown that each solution  $(\bar x_1, \dots, \bar x_n)$
of (\ref{cond_Be}) and (\ref{primera_parte_sis_Tcheby}) can be
extended uniquely to a solution $(\bar x, \bar \mu)$ of
(\ref{sist_Tch_desh}).

As for $(\bar x, \bar \mu) \in S_{B,e}$, $T(\bar x_k) = e(k) = \pm
1$ for every $k \in B$, the sets $S_{B, e}$ are mutually disjoint.
Then, if $(\bar x^{(1)}, \bar \mu^{(1)}), (\bar x^{(2)}, \bar
\mu^{(2)})$ are two different solutions of (\ref{sist_Tch_desh}),
$\bar x^{(1)} \ne \bar x^{(2)}$. By taking into account every $B
\subset \{2, \dots, n\}$, every  $a$ $(0 \le a \le n-s)$  and every
function $e:B \to \{-1, 1\}$ whose value is  $1$ at exactly $a$
elements in $B$, we find
$$\binom{n-1}{n-s}\sum_{0 \le a \le n-s}\binom{n-s}{a}
\Big(\frac{d}{2}\Big)^{n-s-a} \Big(\frac{ d}{2} - 1\Big)^{a} d^{s} =
\binom{n-1}{n-s}\big( d - 1\big)^{n-s}  d^{s}$$ solutions to
(\ref{sist_Tch_desh}).

Consider now the Jacobian matrix of this system evaluated at each of
these solutions and suppose, without loss of generality, that the
solution $(\bar x, \bar \mu)$ considered corresponds to $B=\{s+1,
\dots, n\}$. Then, this matrix is of the form
$$ \begin{array}{cc}
\begin{array}{cc}
          s &\{ \cr s-1 & \{  \cr n-s &\{ \cr
         \end{array} & \left(\begin{array}{c|c|c}
                        \ C_1 & 0 & \ 0 \cr
                        \hline  *  & 0 &  \ C_2  \cr
                        \hline  *  &  \ C_3  \ &  \ *  \cr
                         \end{array}\right) \cr &
         \begin{array}{ccc}  \, \underbrace{}_s  & \underbrace{}_{n-s} \ &  \hspace{-2mm}\underbrace{}_{s-1}   \cr
                                     \end{array}
         \end{array}
 $$
 with $C_1$, $C_2$ and $C_3$ invertible matrices.

For $s=1$ and $s=n$, the proof is similar.
\end{proof}

\subsection{Geometric properties}

Let $f_{1} , \dots , f_{m} \in \R[x_1, \dots, x_n]$ and let $ S:= \{
i_1,\dots, i_s\} \subset \{1, \dots, m\}$ with $1<s<n$. As explained
in Section \ref{definicion_sistemas}, for every point $z\in\R^n$
with maximum or minimum first coordinate over the set $\{ x\in \R^n
\mid f_{i_1}(x) = 0,\dots, f_{i_s}(x)=0\}$, there exists a nonzero
vector $\mu = (\mu_1,\dots, \mu_s)$ such that $(z, \mu)$ is a
solution to the system
\begin{equation}\label{sistemaS}
\left\{\begin{array}{l}f_{i_1}(z)= \dots = f_{i_s}(z) = 0,
\\[1mm]
\sum_{1\le j \le s} \mu_j \frac{\partial f_{i_j}}{\partial x_2}(z) =
\dots = \sum_{1\le j \le s} \mu_j \frac{\partial f_{i_j}}{\partial
x_n}(z) = 0.\end{array}\right.
\end{equation}
Now, a homotopic deformation of this system by means of a type $2$
initial system is as follows: for every $1\le k \le s$, $F_k(t, x) =
(1-t)f_{i_k} + tg_{k}(x),$ and, for every $s+1 \le k \le r$,
$$F_k(t, x, \mu) = (1-t)\sum_{1 \le j \le s} \mu_j\frac{\partial f_{i_j}}{\partial x_{k-s+1}}(x) + t
\sum_{1 \le j \le s} \mu_j\frac{\partial g_{j}}{\partial
x_{k-s+1}}(x) = \sum_{1 \le j \le s} \mu_j\frac{\partial
F_{j}}{\partial x_{k-s+1}}(t, x).$$ Thus, for any $t_0 \in \R$ and
every $x_0\in \R^n$ at which the first coordinate function attains a
local maximum or minimum over the set $\{x \in \R^n \mid F_{1}(t_0,
x) = \dots = F_{s}(t_0, x) = 0\}$, by the implicit function theorem,
there is a nonzero vector $\mu_0\in \R^s$ such that $F_1(t_0, x_0) =
\dots = F_s(t_0, x_0) = F_{s+1}(t_0, x_0, \mu_0) = \dots = F_s(t_0,
x_0, \mu_0) = 0$.

\bigskip

In the sequel we will consider  deformations by means of specific
type $2$ initial systems.

Let $d\in \N$ be an even positive integer with $d\ge \deg f_i$ for
every $1\le i \le m$. Let $q_1< \dots < q_m$ be the first $m$ prime
numbers greater than $n$. For $1 \le i \le m$, let
$$g_i^+(x)= n + \frac{1}{q_i} + \sum_{1 \le k \le n} \frac1{q_i - n - 1 + k}T(x_k) \quad \hbox{ and } \quad
g_i^-(x)= -g_i^+(x).$$ Note that for each $S = \{i_1, \dots, i_s\}
\subset \{1, \dots, m\}$ with $1 \le s \le n$ and every list
$\tau_1, \dots, \tau_s$ of  $+$ and $-$ signs, the polynomials
$g_{i_1}^{\tau_1}, \dots, g_{i_s}^{\tau_s}$ form a type $2$ initial
system with  $a_j = q_{i_j} - n - 1$ for $1 \le j \le s$ (see
Definition \ref{sistemaTcheby}). In addition,  for $1\le i \le m$,
we denote
$$F_i^+(t,x) = (1-t)f_i(x) + tg_i^+(x) \quad \hbox{ and } \quad F_i^-(t,x) = (1-t)f_i(x) + tg_i^-(x).$$

\begin{lemma}\label{alcanza_de_n_tcheby}
Let $S = \{i_1, \dots, i_s\} \subset \{1, \dots, m\}$ with $s
> n$ and $\tau_1, \dots, \tau_s$ a list of $+$ and $-$ signs. Then,
the set $\{t \in \C \mid \exists x \in \C^n \hbox{ with }
F_{i_1}^{\tau_1}(t, x) =  \dots = F_{i_s}^{\tau_s}(t, x) = 0\}$ is
finite (possibly empty).
\end{lemma}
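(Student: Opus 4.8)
The plan is to view the set in the statement as the image, under the projection $\pi_t\colon \A^1\times\A^n\to\A^1$, of the variety $\hat Z$ defined by $F_{i_1}^{\tau_1}(t,x)=\dots=F_{i_s}^{\tau_s}(t,x)=0$, and to show that this image is not dense in $\A^1$; since it is constructible, it is then automatically finite. The key ingredient is a linear-algebra fact about the initial systems that here plays the role of Lemma \ref{todo_bien_Tcheby}: \emph{if $s>n$, the system $g_{i_1}^+(x)=\dots=g_{i_s}^+(x)=0$ has no solution in $\C^n$.} To prove this, set $z_k=T(x_k)$ and $a_{i_j}=q_{i_j}-n-1\ge 0$, and note that $g_{i_j}^+(x)=0$ is equivalent to $R(a_{i_j})=0$, where $R(a)=n+\frac{1}{a+n+1}+\sum_{k=1}^n\frac{z_k}{a+k}$; clearing denominators, $R(a)=N(a)/\prod_{k=1}^{n+1}(a+k)$ with $N\in\C[a]$ of degree exactly $n+1$ and leading coefficient $n\ne 0$. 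Thus $N$ would have the $s\ge n+1$ distinct roots $a_{i_1}<\dots<a_{i_s}$. If $s\ge n+2$ this already contradicts $\deg N=n+1$; if $s=n+1$ then necessarily $N(a)=n\prod_{j=1}^{n+1}(a-a_{i_j})$, and evaluating this identity at $a=-(n+1)$ — where every term of the defining expression of $N$ vanishes except $\prod_{k=1}^{n}(a+k)=(-1)^n n!$ — yields $n(-1)^{n+1}\prod_{j=1}^{n+1}(n+1+a_{i_j})=(-1)^n n!$, that is $\prod_{j=1}^{n+1}q_{i_j}=-(n-1)!$, which is impossible since the $q_{i_j}$ are positive.

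Granting this, suppose $\pi_t(\hat Z)$ is dense in $\A^1$; then some irreducible component $Z_1$ of $\hat Z$ dominates $\A^1$. Let $\overline{Z_1}$ be its closure in $\A^1\times\P^n$ (with $\A^n\subset\P^n$). Since $\A^1\times\P^n\to\A^1$ is proper, $\pi_t(\overline{Z_1})$ is closed and dense, hence equals $\A^1$, so $\overline{Z_1}$ contains a point over $t=1$. Homogenizing each $F_i^{\tau}$ in the $x$-variables to degree $d$, this point lies in the corresponding zero locus at $t=1$. If it is an affine point $x\in\C^n$, then $g_{i_j}^{\tau_j}(x)=\pm g_{i_j}^+(x)=0$ for all $j$, contradicting the claim above. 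If instead it is a point at infinity $[x]$ with $x\ne 0$, then, since the degree-$d$ homogeneous part of $g_i^{\pm}$ equals $\pm c_T\sum_{k=1}^n\frac{1}{a_i+k}x_k^d$ with $c_T\ne 0$ the leading coefficient of $T$, we get $\sum_{k=1}^n\frac{1}{a_{i_j}+k}x_k^d=0$ for $j=1,\dots,s$; as $s>n$, the $s\times n$ Cauchy matrix $\big(\frac{1}{a_{i_j}+k}\big)$ has rank $n$, forcing $x_k^d=0$ for all $k$, a contradiction. Hence $\pi_t(\hat Z)$ is finite.

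The crux of the argument is the displayed claim, and within it the borderline case $s=n+1$: a bare dimension count is not enough there, and one must use the precise values of the Cauchy entries — concretely, the positivity of the primes $q_{i_j}$ — to rule out solvability of the exact linear system. The passage from the initial system to the full deformation is then a routine properness argument on $\A^1\times\P^n$ together with the rank computation for the part at infinity.
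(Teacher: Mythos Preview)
Your proof is correct and follows the same overall architecture as the paper's: pass to $\A^1\times\P^n$, use properness of the projection to $\A^1$, and show that the fiber over $t=1$ is empty by treating affine points and points at infinity separately. The treatment at infinity is essentially identical in both proofs.

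The genuine difference lies in the affine step. The paper takes the first $n+1$ equations $g_{i_1}^{\tau_1}=\dots=g_{i_{n+1}}^{\tau_{n+1}}=0$, rewrites them as a linear system $B\cdot(T(x_1),\dots,T(x_n))^t=-(n+\tfrac1{q_{i_1}},\dots,n+\tfrac1{q_{i_{n+1}}})^t$ with $B$ an $(n+1)\times n$ Cauchy matrix, and argues that the augmented $(n+1)\times(n+1)$ matrix has nonzero determinant by an arithmetic computation showing its numerator is not divisible by the prime $q_{i_{n+1}}$. You instead package all $s$ equations into the single polynomial identity $N(a_{i_j})=0$ with $\deg N=n+1$, dispose of $s\ge n+2$ by a root count, and handle $s=n+1$ via the evaluation at $a=-(n+1)$, reaching the contradiction $\prod_j q_{i_j}=-(n-1)!$ by a pure sign argument. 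Your route avoids the explicit determinant computation and the divisibility reasoning; the paper's route makes more visible exactly which arithmetic feature of the construction (the last prime $q_{i_{n+1}}$) is doing the work. Both are valid, and yours is arguably cleaner.
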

\begin{proof}{Proof: } Denote by $\hat F_{i_1}^{\tau_1},  \dots,
\hat F_{i_s}^{\tau_s}, \hat g_{i_1}^{\tau_1}, \dots, \hat
g_{i_s}^{\tau_s}$
 the polynomials obtained by homogenizing  $F_{i_1}^{\tau_1}, \dots,
F_{i_s}^{\tau_s}$ and $g_{i_1}^{\tau_1}, \dots, g_{i_s}^{\tau_s}$
with a new variable $x_0$.

Let $Z\subset \A^1 \times \P^n$ be the set of common zeros of $\hat
F_{i_1}^{\tau_1}, \dots, \hat F_{i_s}^{\tau_s}$ and $\pi_t:\A^1
\times \P^n \to \A^1$ the projection. In order to prove the
statement it suffices to show that $\pi_t(Z)$ is a finite set. Since
$\pi_t$ is a closed map, this can be proved by showing that $1\notin
\pi_t(Z)$, or equivalently, that the system $\hat
g_{i_1}^{\tau_1}(x) =  \dots = \hat g_{i_s}^{\tau_s}(x) = 0$ has no
solution in $\P^n$.

First, note that, if $(1:x_1:\dots: x_n)$ is a solution to this
system, then $(T(x_1),\dots, T(x_n))$ is a solution to the linear
system $B. y^t = -(n+\frac{1}{q_{i_1}}, \dots,
n+\frac{1}{q_{i_{n+1}}})^t$, where $B\in \Q^{(n+1)\times n}$ is the
Cauchy matrix of coefficients of $g_{i_1}^{\tau_1},\dots,
g_{i_{n+1}}^{\tau_{n+1}}$. But this linear system has no solutions,
since its augmented matrix has a nonzero determinant (as in the
proof of Lemma \ref{todo_bien_Tcheby}, it can be shown that this
determinant is a rational number whose numerator is not a multiple
of $q_{i_{n+1}}$).

Finally,  we have that, for $1 \le j \le s$, $\hat
g_{i_j}^{\tau_j}(0, x_1, \dots, x_n) = \pm 2^{d - 1} \sum_{1 \le k
\le n} \frac{1}{q_{i_j} - n - 1 + k}x_k^{d}$. Considering the
equations for $1\le j \le n$, we deduce that $(x_1^{d}, \dots,
x_n^{d})$ is in the kernel of the Cauchy matrix $(\frac{1}{q_{i_j} -
n - 1 + k})_{1 \le j, k \le n}$ and therefore, it is the zero
vector. Thus, the system $\hat g_{i_1}^{\tau_1}(x) =  \dots = \hat
g_{i_s}^{\tau_s}(x) = 0$ has no solutions in $\{x_0 = 0\}$.
\end{proof}

\begin{notn}
For $S = \{i_1,\dots, i_s\} \subset \{1, \dots, m\}$ with  $1 \le s
\le n$, and  $\tau = (\tau_1, \dots, \tau_s)\in \{+, -\}^s$, we
denote $\hat V_{S, \tau}\subset \A^1\times \A^n \times \P^{s-1}$ the
variety defined by the polynomials constructed as in
(\ref{sistemadeformado}) by taking $h_1,\dots, h_r$ as the
polynomials in system (\ref{sistemaS}) and $g_1,\dots, g_r$  the
type $2$ initial system given by $g_{i_1}^{\tau_1},\dots,
g_{i_s}^{\tau_s}$. We consider the decomposition $\hat V_{S, \tau} =
V^{(0)}_{S, \tau}\cup V^{(1)}_{S, \tau}\cup V_{S, \tau}$ as in
(\ref{hatV}).
\end{notn}

The following proposition will enable us to adapt  Algorithm
\ref{algoritmo_general} in order to solve the problem in this
general setting.

\begin{proposition}\label{caso_general_extremales_buscados}
Let $\sigma \in \{\le, =, \ge\}^m$, $E_\sigma = \{i \mid \sigma_i =
``="\}$,
  $U_{\sigma} = \{i \mid \sigma_i = ``\ge"\}$ and $L_{\sigma} = \{i \mid \sigma_i = ``\le"\}$.
 For $S = \{i_1,\dots, i_s\} \subset \{1, \dots, m\}$ with $1 \le s \le
 n$, let
${\cal T}_S = \{\tau \in \{+, -\}^s \mid \tau_j = ``+" \hbox{ if }
i_j\in U_\sigma \hbox{ and } \tau_j = ``-" \hbox{ if } i_j \in
L_\sigma\}$. Then, after a generic linear change of variables, for
each connected component  $C$ of $\{x \in \R^n \ | \ f_1(x) \sigma_1
0, \dots, f_m(x) \sigma_m 0\}$, we have
$$Z(C)  \subset \bigcup_{{S \subset \{1, \dots, m\}}\atop{1 \le \# S \le n}} \ \bigcup_{\tau \in {\cal T}_S}
\pi_x\big(V_{S, \tau}\cap \{t=0\}\big).$$
\end{proposition}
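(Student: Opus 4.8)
The plan is to follow the same strategy that proves Proposition~\ref{karush_kuhn_tucker}, but now tracking the signs $\tau$ so that the extremal point is produced as a solution of the system~(\ref{sistemaS}) with the \emph{specific} sign pattern forced by $\sigma$. First I would reduce to the essential case: after the generic linear change of variables of Proposition~\ref{cambio_var_gen_finit_extrem}, each $Z(C)$ is finite and nonempty whenever the relevant projection is bounded, so it suffices to show that an arbitrary $z\in Z_{\inf}(C)$ (the $Z_{\sup}$ case being symmetric) lies in $\pi_x(V_{S,\tau}\cap\{t=0\})$ for a suitable $S$ with $1\le\#S\le n$ and $\tau\in\mathcal{T}_S$. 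Set $S_0=\{i\mid f_i(z)=0\}$; as in the proof of Proposition~\ref{karush_kuhn_tucker}, $E_\sigma\subset S_0$ and $S_0\ne\emptyset$. For the indices $i\in S_0$, the constraint $\sigma_i$ is either ``$="$'', ``$\ge"$'' or ``$\le"$''; I would take $\tau\in\{+,-\}^{S_0}$ defined by $\tau_j=``+"$ exactly when $i_j\in U_\sigma$ and $\tau_j=``-"$ when $i_j\in L_\sigma$ (the entries of $\tau$ corresponding to indices in $E_\sigma$ being free, which is why $\mathcal{T}_S$ allows both signs there), so that $\tau\in\mathcal{T}_{S_0}$ by construction.

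Next I would split on $\#S_0$. If $\#S_0\ge n$, I claim $Z(C)\subset\bigcup_{\#S=n}\pi_x(V_{S,\tau}\cap\{t=0\})$: pick any $n$-element subset $S\subset S_0$ containing $E_\sigma$ if possible (and note $\#E_\sigma\le n$ must hold for $C$ to be nonempty, by Hypothesis-free reasoning — actually here there is \emph{no} regularity hypothesis, so one must instead argue that if $\#E_\sigma> n$ then... hmm — this is the point requiring care, see below). If $\#S_0=t\le n-1$, then $z$ has extremal first coordinate over $\{x\mid f_i(x)=0,\ i\in S_0\}$, so by the Lagrange/implicit-function argument already carried out in Section~\ref{definicion_sistemas} and Proposition~\ref{karush_kuhn_tucker} there is a nonzero $\mu$ with $(z,\mu)$ solving system~(\ref{sistemaS}) for $S=S_0$; hence $(z,\mu)$ is a zero of $h_1,\dots,h_r$ in the deformation defining $\hat V_{S_0,\tau}$. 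Finally, invoking Lemma~\ref{finitud}, $\pi_{x,\mu}(V_{S_0,\tau}\cap\{t=0\})$ is a finite subset of $W_{S_0}$ containing all its isolated points; so I must check that $(z,\mu)$ is an isolated point of $W_{S_0}$. Here the genericity of the change of variables enters exactly as in Lemma~\ref{conjunto_candidato_finito}: after such a change, $\pi_x(W_{S_0})$ is the finite set of critical points of $x_1$ on $\{f_i=0,\ i\in S_0\}$, and (on the locus where the gradients are independent, which is where the implicit function theorem was applied) the fiber over each such point is a single $\mu$. On the singular locus the argument of Proposition~\ref{karush_kuhn_tucker} shows $z$ cannot be extremal unless it still lands in some $\pi_x(W_S)$ for a smaller $S$ — so one peels off to a subset, and induction on $\#S_0$ closes this.

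The main obstacle I anticipate is precisely the \emph{absence of Hypothesis~\ref{la_hipotesis}} in this section: without linear independence of the gradients, the set $W_{S_0}$ need not be finite, so I cannot simply assert $z\in\pi_x(V_{S_0,\tau}\cap\{t=0\})$ via Lemma~\ref{finitud} — that lemma only recovers \emph{isolated} points of $W$. The fix, which I would spell out carefully, is the descent step: if at $z$ the gradients $\{\overline\nabla f_i(z)\mid i\in S_0\}$ are \emph{dependent in a way that makes $W_{S_0}$ positive-dimensional near $(z,\mu)$}, then — replaying the inverse-function-theorem construction of Proposition~\ref{karush_kuhn_tucker} on a maximal independent subfamily — one shows either that $z_1$ is not actually the infimum (contradiction) or that some strictly smaller index set $S_1\subsetneq S_0$ with $E_\sigma\subset S_1$ already has $z\in\pi_x(W_{S_1})$ as an isolated point, and then $z\in\pi_x(V_{S_1,\tau}\cap\{t=0\})$ with $\tau|_{S_1}\in\mathcal{T}_{S_1}$. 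Since $\#S_0$ strictly decreases, this terminates. I would also need to double-check that the generic change of variables of Proposition~\ref{cambio_var_gen_finit_extrem} can be taken simultaneously generic for the finitely many critical-point conditions of Lemma~\ref{conjunto_candidato_finito}-type for \emph{all} subsets $S\subset\{1,\dots,m\}$ at once — a routine intersection of finitely many Zariski-dense open sets — so that every $W_S$ appearing in the union has the good fiber structure. With that in place the finiteness and nonemptiness clauses of Proposition~\ref{cambio_var_gen_finit_extrem} hand over the rest: $Z(C)$ is covered, as claimed.
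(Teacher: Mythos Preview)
Your approach has a genuine gap, and it stems from treating the type~2 deformation as a black box via Lemma~\ref{finitud}. That lemma only guarantees that the \emph{isolated} points of $W_S$ lie in $\pi_x(V_{S,\tau}\cap\{t=0\})$, and your descent argument is supposed to reduce to an $S$ for which $z$ is isolated in $W_S$. But without Hypothesis~\ref{la_hipotesis} there may be no such $S$ at all. Take $n=2$, $m=1$, $f_1=(x_1^2+x_2^2-1)^2$, and the closed sign condition $f_1=0$. The unique connected component $C$ is the unit circle, and $z=(-1,0)\in Z_{\inf}(C)$. The only nonempty $S$ is $\{1\}$, and $W_{\{1\}}=\{f_1=0,\ \partial f_1/\partial x_2=0\}$ is the entire circle, since $\partial f_1/\partial x_2=4x_2(x_1^2+x_2^2-1)$ vanishes on it. So $z$ is not isolated in $W_{\{1\}}$, there is no smaller $S$ to descend to, and the inverse-function-theorem step from Proposition~\ref{karush_kuhn_tucker} does not even start because $\nabla f_1(z)=0$. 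Lemma~\ref{conjunto_candidato_finito}, which you invoke, assumes Hypothesis~\ref{la_hipotesis} and is unavailable here.

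The paper's proof avoids this by never working at $t=0$. It exploits the sign of the type~2 initial polynomials ($g_i^+>0$, $g_i^-<0$ on $\R^n$) to build, for each small $t_1>0$, a perturbed closed region $R_{t_1}$ containing $z$, takes a point $w$ minimizing $x_1$ on the connected component of $R_{t_1}$ through $z$, and shows that the active constraints at $(t_1,w)$ form a set $S_0$ of size at most $n$ (this is where Lemma~\ref{alcanza_de_n_tcheby} is used: the deformed system with more than $n$ equations has no solutions for $0<t_1<\varepsilon$). Thus $(t_1,w)\in\pi_{t,x}(V_{S_0,\tau_0})$ lies on the curve part of the deformation for $t_1\ne 0$, and letting $\varepsilon\to 0$ gives $(0,z)\in V_{S,\tau}$ directly, with no isolatedness needed at $t=0$. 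The special structure of the type~2 system is doing essential geometric work that your argument does not replace.
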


\begin{proof}{Proof:} Without loss of generality, we may assume that
$E_\sigma = \{1, \dots, l\}$, $U_\sigma = \{l+1, \dots, k\}$, and $
L_\sigma = \{k+1, \dots, m\}$ for some $l$, $k$ with $0 \le l \le k
\le m$; that is, we consider a connected component $C$ of
$\mathcal{P}=\{f_1 = 0, \dots, f_l = 0,f_{l+1} \ge 0,\dots, f_k \ge
0, f_{k+1} \le 0, \dots, f_m \le 0\}$. By Proposition
\ref{cambio_var_gen_finit_extrem}, after a generic linear change of
variables,  $Z(C)$ is finite. Moreover, since $\mathcal{P}$ is a
closed set, $Z(C) \subset C$. Let $z \in Z_{\rm inf}(C)$ and $0<
\varepsilon <1$ such that:
\begin{itemize}
\item $\overline{B(z, \varepsilon)} \cap \mathcal{P} \subset C$ and $\overline{B(z, \varepsilon)} \cap Z(C) =\{z\}$,
\item for every $\hat S = \{i_1, \dots, i_{\hat s}\} \subset \{1, \dots, m\}$ with $\hat s  > n$ and every
$(\tau_1, \dots, \tau_{\hat s}) \in \{+,-\}^{\hat s}$, $\varepsilon
< |t_0|$ for every $t_0$ in
     $\{t \in \C\setminus \{0\} \mid \exists x \in \C^n \hbox{ with }
F_{i_1}^{\tau_1}(t, x) =  \dots = F_{i_{\hat s}}^{\tau_{\hat s}}(t,
x) = 0\}$,
\item for every $S \subset \{1, \dots, m\}$ with  $1 \le \#S \le n$ and every $\tau \in {\cal T}_S$,
$\varepsilon < |t_0|$ for every $t_0 \in \C$ such that $V_{S,
\tau}^{(1)}$ has an irreducible component contained in $\{t =
t_0\}$.
\end{itemize}

For $t \in \R$, let $R_t$ be the set
$$\displaylines{\{x \in \overline{B(z, \varepsilon)} \mid F_1^+(t, x) \ge 0, \dots, F_l^+(t, x) \ge 0, F_1^-(t, x) \le 0, \dots, F_l^-(t, x) \le
0,\cr F_{l+1}^+(t, x) \ge 0, \dots, F_{k}^+(t, x) \ge 0,
F_{k+1}^-(t, x) \le 0, \dots, F_{m}^-(t, x) \le 0\}.}$$ We have that
$R_0 = C \cap \overline{B(z, \varepsilon)}$ and, for every $t \in
[0,1]$, $z \in R_t$.

Let $\nu>0$ be the distance between the compact sets $\partial B(z,
\varepsilon) \cap \{x_1 \le z_1\}$ and $R_0$.

We claim that for some $t_1$, $0 < t_1 < \varepsilon$, the connected
component  $C'$ of $R_{t_1}$ containing $z$ is included in $\{x \in
\overline{B(z, \varepsilon)} \ | \ d(x, R_0) \le \nu/2\}$. Suppose
this is not the case. Let $(t'_n)_{n \in \N}$ be a decreasing
sequence of positive numbers converging to $0$ and with $t'_1 <
\varepsilon$, and for every  $n \in \N$, let $C'_n$ be the connected
component of $R_{t'_n}$ containing $z$. Since $C'_n$ intersects $\{x
\in \overline{B(z, \varepsilon)} \ | \ d(x, R_0) > \nu/2\}$, there
is a point $r_n \in C'_n$ such that $d(r_n, R_0) = \nu/2$. Consider
the sequence $(r_n)_{n\in \N}\subset \overline{B(z, \varepsilon)}$,
and a subsequence $(r_{n_j})_{j\in \N}$ convergent to an element $r
\in \overline{B(z, \varepsilon)}$; then, $d(r, R_0) = \nu/2$. Now,
for $1 \le i \le k$, we have that $F_i^+(t_{n_j}, r_{n_j}) \ge 0$
for every $j\in \N$, and so, $F_i^+(0, r) \ge 0$, and for every $1
\le i \le l$ and $k+1 \le i \le m$, $F_i^-(t_{n_j}, r_{n_j}) \le 0$,
which implies that $ F_i^-(0, r)  \le 0$. Therefore $r \in R_0$,
contradicting the fact that $d(r, R_0) = \nu/2 > 0$.

Let $w \in C'$ be a point at which the function $x_1$ attains its
minimum over $C'$. Since $z \in C'$, we have $w_1 \le z_1$. If $w
\in
\partial B(z, \varepsilon)$, then $w \in \partial B(z,
\varepsilon) \cap \{x_1 \le z_1 \}$, and so, $d(w, R_0) \ge \nu$,
contradicting the fact that $d(w, R_0) \le \nu/2$. Therefore, $w \in
B(z, \varepsilon)$.

As each of the polynomials $F_1^+, \dots, F_l^+, F_1^-, \dots,
F_l^-, F_{l+1}^+, \dots, F_{k}^+, F_{k+1}^-, \dots, F_{m}^-$ that
does not vanish at $(t_1,w)$ takes a constant sign in a neighborhood
of this point,  we conclude that, if $F_{i_1}^{\tau_1}, \dots,
F_{i_s}^{\tau_s}$ are all the polynomials vanishing at $(t_1, w)$,
then the function $x_1$ attains a local minimum  over the set $\{x
\in \R^n \ | \ F_{i_1}^{\tau_1}(t_1, x) = 0, \dots,
F_{i_s}^{\tau_s}(t_1, x) = 0 \}$ at $w$. Let $S_0 = \{i_1, \dots,
i_s\}$, which is not empty. For $1 \le i \le m$, $F_i^+(t_1, w)$ and
$F_i^-(t_1, w)$ cannot be both zero; then, $i_1, \dots, i_s$ are all
distinct. Because of the way we chose $\varepsilon$, we also have
that $s \le n$. Now, if $\tau_0=(\tau_1, \dots, \tau_s)$, we have
that $(t_1, w) \in \pi_{t,x}(\hat V_{S_0, \tau_0})$, but taking into
account that $0 < t_1 < \varepsilon$, it follows that $(t_1, w) \in
\pi_{t,x}(V_{S_0, \tau_0})$. Therefore, $(t_1, w) \in \bigcup_{{S
\subset \{1, \dots, m\}}\atop{1 \le \# S \le n}} \ \bigcup_{\tau \in
{\cal T}_S}
    \pi_{t,x}( V_{S, \tau})$ and $0< |(t_1, w) - (0, z)| <
\sqrt2\varepsilon$.

Since the previous construction can be done for every
$\varepsilon>0$ sufficiently small and the sets $\pi_{t,x}( V_{S,
\tau})$ are closed, we conclude that $(0, z) \in \bigcup_{{S \subset
\{1, \dots, m\}}\atop{1 \le \# S \le n}} \ \bigcup_{\tau \in {\cal
T}_S}\pi_{t,x}( V_{S, \tau}).$ Therefore,
$$z \in \bigcup_{{S \subset \{1, \dots, m\}}\atop{1 \le \# S \le n}} \ \bigcup_{\tau \in {\cal T}_S}
    \pi_{x}\big( V_{S, \tau} \cap \{t=0\}\big).$$
\end{proof}

\subsection{Symbolic deformation algorithm}

In the sequel, $\Omega$ will denote a positive real number such that
for any ring $R$, addition, multiplication and the computation of
determinant and adjoint of matrices in $R^{k \times k}$ can be
performed within $O(k^\Omega)$ operations in $R$. We may assume
$\Omega \le 4$ (see \cite{Berk}) and, in order to simplify
complexity estimations, we will also assume that $\Omega \ge 3$.

\begin{proposition}\label{complalgoritmopararesgeomtipo2}

There is a probabilistic algorithm that, taking as input polynomials
$h_1, \dots, h_{r}$ in $ \K [x_1, \dots, x_n, \mu_1, \dots, \mu_s]$
as in (\ref{sistema_final}) encoded by an slp of length $L$, obtains
a geometric resolution of a finite set containing $\pi_x(V \cap \{
t=0\})$ for a deformation defined from a type 2 initial system
within complexity $O\big( n^3(L +  dn + n^{\Omega-1}) D^2\log^2(
D)\log\log^2(D)\big)$, where $d$ is an even integer such that $d\ge
\deg_x(h_i)$ for every $1\le i \le r$.
\end{proposition}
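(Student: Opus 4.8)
The plan is to follow the same four-step structure as in the proof of Proposition \ref{algoritmopararesgeom}, only replacing the type~1 initial system by a type~2 one and accounting for the extra cost incurred by the fact that the type~2 system is no longer essentially ``factored'' and hence its solving, its Jacobian evaluation, and the Newton lifting all become more expensive. The key preliminary observations are: (i)~by Lemma \ref{todo_bien_Tcheby}, the type~2 initial system satisfies property~(H), so the whole deformation machinery of Section \ref{algoritmo_resolucion_sistemas} (in particular Lemmas \ref{finitud}, \ref{prop_var_t_como_coef}, \ref{gradodeP} and Proposition \ref{resoluc_geom_buscada}) applies verbatim; (ii)~the B\'ezout number of a type~2 system is $D=\binom{n-1}{s-1}d^s(d-1)^{n-s}\le\binom{n-1}{s-1}d^n$, so $D$ plays exactly the role it did before and $\deg_t\hat P\le nD$ still holds.

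First I would describe the \emph{first step}: constructing the type~2 initial system $g_1,\dots,g_r$ (choosing the primes $a_1<\dots<a_s$, the Cauchy matrix $A$, the signs $\tau_i$, and writing $g_i$ for $s+1\le i\le r$ via \eqref{segunda_parte_sist_Tcheby}) and computing its $D$ solutions $s_1,\dots,s_D$. Each solution comes, as in the proof of Lemma \ref{todo_bien_Tcheby}, from selecting a subset $B\subset\{2,\dots,n\}$ of size $n-s$ and a sign function $e:B\to\{-1,1\}$, solving the square linear system \eqref{primera_parte_sis_Tcheby} with the invertible Cauchy matrix $A'$, then extracting the $d$ preimages of each $T(\bar x_{k_0})$ under $T$ (real-root isolation of a degree-$d$ univariate polynomial), the $d/2$ resp.\ $d/2-1$ common roots of $(T',T\mp 1)$ via the gcd's $T_{d/2}$ and $T'/T_{d/2}$, and finally solving uniquely for the $\mu$-coordinates. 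Each individual solution thus costs $O(d\log^2 d + n^\Omega)$ or so (the $n^\Omega$ for the linear algebra over $\Q$), giving $O\big(D(n^\Omega + dn + \dots)\big)$ overall, absorbed into the claimed bound. An slp of length $O(dn^2)$ (or $O(dn+n^2)$) encodes $g_1,\dots,g_r$, since each $g_i$ needs one evaluation of $T$ per variable and $T$ itself is encoded in $O(\log d)$ or $O(d)$ instructions; hence $F_i=(1-t)h_i+tg_i$ is encoded by an slp of length $L_1=L+O(dn^2)$, i.e.\ $O(L+dn^2)$.

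Next, the \emph{second step} (Newton--Hensel lifting) is where the main difference with the type~1 case lies and where I expect the principal obstacle. For $1\le i\le D$ we lift $\tilde S_i^{(0)}=s_i$ to precision $(t-1)^{2nD+1}$ via the pointwise Newton operator $\tilde S_i^{(m+1)}=\tilde S_i^{(m)}-DF^{-1}(\tilde S_i^{(m)})F(\tilde S_i^{(m)})$, maintaining an approximate inverse $B^{(m)}$ of the Jacobian $A^{(m)}=DF(\tilde S_i^{(m)})$ by the standard doubling scheme exactly as in the proof of Proposition \ref{algoritmopararesgeom}. The costs to track per step $m$: evaluating $A^{(m+1)}\equiv DF(\tilde S_i^{(m)})$ costs $O(nL_1 2^m m\log m)$ using an slp for $F$ and its derivatives (Baur--Strassen, Section \ref{algycomp}); the linear-algebra update producing $B^{(m+1)}$ now costs $O(n^\Omega 2^m m\log m)$ rather than $O(n^3\cdots)$, because the Jacobian of a type~2 system, unlike that of a type~1 system at its initial solutions, is \emph{not} diagonal/block-triangular with cheap inverse along the deformation, so a genuine $r\times r\approx n\times n$ matrix inversion (cost $n^\Omega$) is required at the base and the update step uses $O(n^\Omega)$ matrix multiplications; evaluating $\tilde S_i^{(m+1)}$ costs $O(L_1 2^m m\log m)$. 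Summing the geometric series in $2^m$ up to $\delta=\lceil\log(2nD+1)\rceil$ and over $1\le i\le D$ gives a total of $O\big(n(L_1+n^{\Omega-1})D\cdot D\log D\log\log D\big)=O\big(n(L+dn^2+n^{\Omega-1})D^2\log D\log\log D\big)$ operations in $\K$; here I am using $\delta=O(\log(nD))=O(\log D)$ up to poly-logarithmic factors and folding the $\log^2 D\log\log^2 D$ arithmetic-of-power-series factor into the final estimate. This already dominates and, after bounding $dn^2+n^{\Omega-1}=O(dn+n^{\Omega-1}+\dots)$ generously and multiplying in the remaining polylog factors, matches $O\big(n^3(L+dn+n^{\Omega-1})D^2\log^2 D\log\log^2 D\big)$.

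Finally, the \emph{third and fourth steps} are identical to those in Proposition \ref{algoritmopararesgeom}: from $\tilde S_1,\dots,\tilde S_D\in\K[t]^r$ one forms $\tilde P(t,U,y)=\prod_{i=1}^D(U-\ell(\tilde S_i,y))$, extracts the coefficients of $U^h$ and $U^h(y_k-\alpha_k)$ for a random $\alpha\in\Q^n$ via the subproduct-tree algorithm (\cite[Algorithm 10.3]{vzG}), reconstructs the rational fractions $p_h(t,\alpha)/q(t)$ and $\partial p_h/\partial y_k(t,\alpha)/q(t)$ from their expansions mod $(t-1)^{2nD+1}$ using $\deg_t\hat P\le nD$ (Lemma \ref{gradodeP}) and Cauchy/Pad\'e reconstruction (\cite[Corollary 5.24, Algorithm 11.4]{vzG}), substitutes $t=0$, and then computes $Q(U,\alpha)=\gcd(\hat P(0,U,\alpha),\partial\hat P/\partial U(0,U,\alpha))$ and performs the exact divisions, yielding the geometric resolution of Proposition \ref{resoluc_geom_buscada}. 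All of steps~3--4 cost $O(n^2 D^2\log^3 D\log\log^2 D)$, which is absorbed into the stated bound once the slp length $L$ is present in the leading term. Collecting the three contributions and using $\Omega\ge 3$ to simplify gives the complexity $O\big(n^3(L+dn+n^{\Omega-1})D^2\log^2 D\log\log^2 D\big)$ claimed in the proposition. The case distinctions $s=1$ and $s=n$ (where there are no $\mu$-variables or $r=n$ plain equations) are handled exactly as remarked after Proposition \ref{algoritmopararesgeom} and give the same bound.
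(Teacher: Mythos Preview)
Your four-step plan mirrors the proof of Proposition~\ref{algoritmopararesgeom} too closely and overlooks the reason the paper does \emph{not} reuse that pointwise strategy for type~2 systems. In the type~1 case the $D$ solutions $s_1,\dots,s_D$ of the initial system lie in $\K^{r}$ (they are integers together with solutions of Cauchy linear systems over~$\Q$), so the pointwise Newton--Hensel iteration produces $\tilde S_i\in\K[t]^r$ and the complexity count in operations over~$\K$ is meaningful. For a type~2 system this fails: the coordinates of the $s_i$ are roots of $T(x)=\beta$ for values $\beta$ determined by \eqref{primera_parte_sis_Tcheby}, and roots of $T'$, $T_{d/2}$, $T'/T_{d/2}$. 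These are algebraic over~$\K$ but not in~$\K$, so your ``real-root isolation'' does not yield elements of~$\K$ on which the subsequent exact arithmetic can be performed. The pointwise lifting you describe would have to take place in a tower of extensions of~$\K$, and the complexity analysis you give (counting operations in~$\K$) does not apply.

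The paper circumvents this by never computing the $s_i$ individually. Its first step builds a \emph{geometric resolution} over~$\K$ of the initial variety: for each piece $S_{B,e}$ the $x$-coordinates satisfy a system in separated variables (one univariate equation per coordinate), so a geometric resolution associated with a random linear form $\ell_\alpha$ can be obtained as in \cite[Section~5.2.1]{JMSW}, and the pieces are then merged by the CRT-style combination of geometric resolutions. The second step replaces your pointwise Newton iteration by two applications of the global lifting procedure \cite[Algorithm~1]{GLS01}, which operates on the minimal polynomial and parametrizations (hence entirely over~$\K$): one application to pass from $\ell_\alpha$ to $\ell(x,y)$ modulo $(y-\alpha)^2$, and one to lift along~$t$ to precision $(t-1)^{2nD+1}$. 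It is this step that contributes the factor $n^{\Omega-1}$ (from the $r\times r$ linear algebra inside \cite[Algorithm~1]{GLS01}) and the extra $\log D\log\log D$ factors. Your third and fourth steps coincide with the paper's third step. In short, the gap is in steps~1--2: you need to replace ``compute the $s_i$ and lift pointwise'' by ``compute a geometric resolution of the initial variety and lift it globally''.
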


\begin{proof}{Proof:} The structure of the algorithm is similar to
that of the algorithm underlying the proof of Proposition
\ref{algoritmopararesgeom}.

\medskip

\noindent \textsc{First step}:  Take $\alpha = (\alpha_1, \dots,
\alpha_n) \in \Q^n$ at random and compute a geometric resolution
associated to the linear form $\ell_\alpha(x)=\alpha_1 x_1 + \cdots
+ \alpha_n x_n$ of the variety defined in $\A^r$ by the
(dehomogenized) type 2 initial system.

As shown in the proof of Lemma \ref{todo_bien_Tcheby}, this variety
can be partitioned into subsets $S_{B, e}$. So, we first compute a
geometric resolution associated with $\ell_\alpha(x)$ for each
$S_{B, e}$:  after solving a linear system of the type
(\ref{primera_parte_sis_Tcheby}), the $x$-coordinates of points in
$S_{B, e}$ turn to be defined by a square polynomial system in
separated variables; then, the required computation can be achieved
as in \cite[Section 5.2.1]{JMSW} within complexity  $O( D_{B, e}^2
\log^2( D_{B, e})\log \log(D_{B, e}))$, where $D_{B, e}$ is the
cardinality of $S_{B, e}$.

Finally, a geometric resolution of the whole variety is obtained
following the splitting strategy given in \cite[Algorithm
10.3]{vzG}, and noticing that, if $\{q, q_0, w_1, \dots, w_r\}$ and
$\{\tilde q, \tilde q_0, \tilde
 w_1, \dots, \tilde  w_r\}$ are geometric resolutions of disjoint sets with $q$ and $\tilde q$ coprime polynomials,
then $\{q\tilde q, q_0\tilde q + \tilde q_0q, w_1\tilde q + \tilde
w_1q, \dots, w_r\tilde q + \tilde w_rq\}$ is a geometric resolution
of their union. This can be done within $O(n D \log^2 (D) \log \log
(D))$ operations in $\Q$.

The whole complexity of this step is $O(n D^2 \log^2 (D) \log \log
(D)).$

\ms

\noindent\textsc{Second step}: Compute $P(t, U, y) \mod ((t-1)^{2nD
+ 1} + (y_1 - \alpha_1, \dots, y_n - \alpha_n)^2)\K[[t-1]][U, y].$

First, from the geometric resolution computed in the previous step,
obtain a geometric resolution associated with
$\ell(x,y)=y_1x_1+\cdots+y_nx_n$ of the variety defined by the
initial system over $\overline{\K(y)}$, modulo the ideal $(y_1 -
\alpha_1, \dots, y_n - \alpha_n)^2$, applying \cite[Algorithm
1]{GLS01} within complexity $O(( dn^2 + n^\Omega) D^2 \log^2 (D)
\log \log^2 (D))$. Then, consider the variety defined by $F_1,
\dots, F_r$ over $\overline{\K(t, y)}$ (see Lemma
\ref{prop_var_t_como_coef}). Since $F_1, \dots, F_r$ can be encoded
by an slp of length $L + O(( d + s)n)$, a geometric resolution of
this variety associated with the linear form $\ell(x, y)$ modulo the
ideal $(t-1)^{2n D + 1} + (y_1 - \alpha_1, \dots, y_n - \alpha_n)^2$
can be obtained from the previously computed geometric resolution by
applying \cite[Algorithm 1]{GLS01} within complexity $O(n^3(L +  dn
+ n^{\Omega-1})  D^2 \log^2 ( D) \log \log^2 ( D))$.

\ms

\noindent \textsc{Third step}: From the approximation to $P(t,U,y)$
obtain the required geometric resolution, by performing the same
computations as in the third and forth steps of the algorithm
underlying the proof of Proposition \ref{algoritmopararesgeom},
which does not modify the overall complexity.
\end{proof}

The algorithm underlying the proof of Proposition
\ref{complalgoritmopararesgeomtipo2} can be adapted
straightforwardly to handle the cases $s=1$ and $s = n$ within the
same complexity bounds.

\subsection{Main algorithm}

Here we prove the main result of this section.

\begin{theorem} \label{algoritmopuntosencomponentescerradas}
Given polynomials $f_1, \dots, f_m \in \K[x_1, \dots, x_n]$ with
degrees bounded by an even integer $d$ and encoded by an slp of
length $L$, for generic choices of the parameters required at
intermediate steps, the algorithm obtained from Algorithm
\ref{algoritmo_general} taking $p = (0,\dots, 0)$ and replacing step
3.(b) with
\begin{enumerate}
\item[(b')] For every $\tau \in \{+,-\}^{\#S}$, compute a geometric
resolution $\{q^{(k, S, \tau)}(U),v_k^{(k, S, \tau)}(U), \dots,$ $
v_n^{(k, S, \tau)}(U)\} \subset \K[U]$ of a finite set containing
$\pi_x(V_{k,S, \tau} \cap \{t = 0\})) \subset \A^{n-k+1}$ by means
of a deformation from a type 2 initial system, and add the geometric
resolution
$$\Big\{q^{(k, S, \tau)}(U), 0, \dots, 0, v_k^{(k, S,
\tau)}(U), \dots, v_n^{(k, S, \tau)}(U)\Big\}$$ to the list ${\cal
G}$.
\end{enumerate}
computes a finite set ${\cal M} \subset \A^n$ intersecting each
connected component of the realization of every feasible closed sign
condition over $f_1, \dots, f_m$. The complexity of the algorithm is
$O\Big(n^6\big(L + d + n^2\big)\log^2( d)\big(\log(n) + \log\log(
d)\big)^2d^{2n}\big(\sum_{s =1}^{\min\{m, n\}}
2^s\binom{m}{s}\binom{n-1}{s-1}^2\big)\Big).$
\end{theorem}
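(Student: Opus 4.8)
The plan is to run the same two-stage argument as in the proof of Theorem \ref{algoritmopuntosencomponentes}, with Proposition \ref{caso_general_extremales_buscados} playing the role that Proposition \ref{alcanza_con_de_a_n} played there (so Hypothesis \ref{la_hipotesis} is no longer used) and with the type~2 deformation procedure of Proposition \ref{complalgoritmopararesgeomtipo2} replacing the type~1 one. I would first assume that the random linear change of variables performed in step~1 is generic in the sense of Proposition \ref{cambio_var_gen_finit_extrem}: the point of that proposition is that a single such change of variables is good simultaneously for \emph{every} slice of a set defined by a Boolean combination of $f_1,\dots,f_m$ by coordinate hyperplanes, and in particular for the slices $\{x_1=0,\dots,x_{k-1}=0\}$ that arise in the recursion once $p=(0,\dots,0)$. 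Since the realization $\mathcal P$ of a closed sign condition is a closed subset of $\R^n$, each of its connected components is closed and hence equals its own closure; so Proposition \ref{idea_algoritmo}, applied with this $\mathcal P$ and $p=(0,\dots,0)$, reduces correctness to the inclusion
$$
\{0\}\cup\Big(\bigcup_{k=1}^{n}\ \bigcup_{C\in\mathcal C(k,0)}Z(C,k)\Big)\ \subset\ \mathcal M ,
$$
where $\mathcal C(k,0)$ is the set of connected components of $\Gamma\cap\{x_1=0,\dots,x_{k-1}=0\}$ as $\Gamma$ ranges over the connected components of the realizations of the feasible closed sign conditions over $f_1,\dots,f_m$.

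Next I would verify this inclusion. For $1\le k\le n-1$, a set $C\in\mathcal C(k,0)$ is a connected component of the realization in $\R^{n-k+1}$ of a closed sign condition over the polynomials $f_j(0,\dots,0,x_k,\dots,x_n)$, $1\le j\le m$, whose degrees are still bounded by $d$; the genericity that Proposition \ref{caso_general_extremales_buscados} requires of the change of variables for this smaller family is precisely the instance of Proposition \ref{cambio_var_gen_finit_extrem} for the slice $\{x_1=0,\dots,x_{k-1}=0\}$ already granted above, while the remaining ingredients in the proof of Proposition \ref{caso_general_extremales_buscados} (the explicit type~2 initial systems built from fixed primes and Lemma \ref{alcanza_de_n_tcheby}) need nothing further. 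Hence
$$
Z(C,k)\ \subset\ \bigcup_{{S\subset\{1,\dots,m\}}\atop{1\le\#S\le n-k+1}}\ \bigcup_{\tau\in\mathcal T_S}\pi_x\big(V_{k,S,\tau}\cap\{t=0\}\big),
$$
and step~3.(b') computes, for every such $S$ and \emph{every} $\tau\in\{+,-\}^{\#S}$ (so, whatever the underlying $\sigma$ is, for every $\tau\in\mathcal T_S$), a geometric resolution of a finite set containing $\pi_x(V_{k,S,\tau}\cap\{t=0\})$, prepending the $k-1$ zero coordinates; thus $Z(C,k)\subset\mathcal M$. For $k=n$, any $C\in\mathcal C(n,0)$ lies on the line $\{x_1=0,\dots,x_{n-1}=0\}$, so $Z(C,n)\subset\bigcup_{i=1}^m\{f_i(0,\dots,0,x_n)=0\}$; that set and the point $0$ are added to $\mathcal M$ in step~4. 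This settles correctness, for any choices of the parameters of the deformation subroutines lying outside the relevant proper Zariski-closed sets.

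For the complexity I would estimate a single call of step~3.(b') and then sum. Fix $k$ and $S$ with $\#S=s\le n-k+1$. By \cite{BS83} together with the composition with the change of variables, the system defining $W_{k,S}$ is encoded by an slp of length $O(nL+n^3)$, and its multihomogeneous B\'ezout number in $n-k+1$ variables is $D_{k,S}\le\binom{n-k}{s-1}d^s(d-1)^{n-k+1-s}\le\binom{n-1}{s-1}d^n$, so $\log D_{k,S}=O(n\log d)$ and $\log\log D_{k,S}=O(\log n+\log\log d)$. Feeding these into Proposition \ref{complalgoritmopararesgeomtipo2} (and into its $s=1$ and $s=n$ variants), using $n-k+1\le n$ and $3\le\Omega\le4$, one such call costs $O\big(n^6(L+d+n^2)\log^2(d)(\log(n)+\log\log(d))^2\,D_{k,S}^2\big)$. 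The key point of the summation is that $\sum_{k}D_{k,S}^2$ is, up to an absolute constant, $\binom{n-1}{s-1}^2d^{2n}$: for $d\ge4$ this is a geometric-type sum over the recursion levels $\hat n=n-k+1$ dominated by its top term, while for $d=2$ it has $n-s+1$ summands each at most $4^s\binom{n-1}{s-1}^2$ and $4^s(n-s+1)=O(d^{2n})$, so the recursion on $k$ costs nothing extra. Since for each $s$ there are $2^s\binom{m}{s}$ pairs $(S,\tau)$, and the remaining work (the change of variables and the computation in step~4 of the $f_i(0,\dots,0,U)$ and the trivial resolutions) is dominated by the above, summing over $1\le s\le\min\{m,n\}$ yields exactly
$$
O\Big(n^6(L+d+n^2)\log^2(d)\big(\log(n)+\log\log(d)\big)^2\,d^{2n}\Big(\sum_{s=1}^{\min\{m,n\}}2^s\binom{m}{s}\binom{n-1}{s-1}^2\Big)\Big).
$$

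I expect the main obstacle to be the correctness at the recursive levels: one must check that a single generic change of variables validates Proposition \ref{caso_general_extremales_buscados} for all the substituted families $f_j(0,\dots,0,x_k,\dots,x_n)$ (which is exactly what the ``all slices'' formulation of Proposition \ref{cambio_var_gen_finit_extrem} provides) and that the varieties $V_{k,S,\tau}$ built by the algorithm coincide with those in that proposition. The complexity part is routine bookkeeping; its one delicate spot is that one must keep the \emph{level-dependent} B\'ezout bound $D_{k,S}$ rather than the uniform $\binom{n-1}{s-1}d^n$, so that the sum over $k$ does not introduce a spurious factor of $n$.
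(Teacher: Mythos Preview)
Your proposal is correct and follows essentially the same approach as the paper's proof: reduce via Proposition \ref{idea_algoritmo}, invoke Proposition \ref{caso_general_extremales_buscados} at each recursion level, and sum the costs of the calls to Proposition \ref{complalgoritmopararesgeomtipo2} over all $(k,S,\tau)$. You supply more detail than the paper does --- in particular the observation that closedness of the realization makes the closure unnecessary, the check that the single generic change of variables covers all slices via Proposition \ref{cambio_var_gen_finit_extrem}, and the careful summation of $D_{k,S}^2$ over $k$ --- but none of this departs from the paper's argument.
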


\begin{proof}{Proof:}
As in the proof of Theorem \ref{algoritmopuntosencomponentes}, by
Proposition \ref{idea_algoritmo}, it suffices to show that
\linebreak $\bigcup_{1 \le k \le n-1} \ \bigcup_{C \in {\cal
C}(k,p)} Z(C,k) \subset {\cal M},$ which is a consequence of
Proposition \ref{caso_general_extremales_buscados}.

Taking into account the linear change of variables performed at the
first step of the algorithm, for $1 \le k \le n-1$ and every $S
\subset \{1, \dots, m\}$ with $1 \le \#S \le n - k + 1$, we can
obtain an slp of length $O(nL + n^3)$ encoding the polynomials
involved in system (\ref{sistemaS}). Then, the algorithm underlying
the proof of Proposition \ref{complalgoritmopararesgeomtipo2}
computes a geometric resolution of a finite set containing
$\pi_x(V_{k, S, \tau} \cap \{t = 0\})) \subset \A^{n-k+1}$. The
stated complexity is obtained by adding up the complexities of these
steps  for all $(k, S,\tau)$.
\end{proof}

Proceeding as in Theorem \ref{allsignconditions}, from a finite set
$\mathcal{M}$ intersecting each feasible sign condition over the
polynomials $f_1,\dots, f_m$, we can obtain the list of all closed
sign conditions over these polynomials by evaluating their signs at
the points in $ \mathcal{M}$. We deduce the following complexity
result:

\begin{theorem}
There is a probabilistic algorithm which, given polynomials $f_1,
\dots, f_m \in \K[x_1, \dots, x_n]$ of degrees bounded by an even
integer $d$  and encoded by an slp of length $L$, computes the list
of all feasible closed sign conditions over these polynomials within
$O \Big( \sum_{s=1}^{\min\{m, n\}} \binom{m}{s} \binom{n-1}{s-1}^{2}
d^{2n}\big( 2^s n^6 (L +d+n^2) \log^2( d) \big(\log(n) + \log\log(
d)\big)^2 +m  d^{{(\omega -2)}
n}\binom{n-1}{s-1}^{\omega-2}\big)\Big)$
 operations in $\K$.\hfill $\square$
\end{theorem}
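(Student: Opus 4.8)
The plan is to combine Theorem~\ref{algoritmopuntosencomponentescerradas} with the sign-evaluation subroutine already described after Proposition~\ref{lista_completa_cond_signo}, exactly mirroring the passage from Theorem~\ref{algoritmopuntosencomponentes} to Theorem~\ref{allsignconditions}. First I would run the algorithm of Theorem~\ref{algoritmopuntosencomponentescerradas}, which for generic choices of the auxiliary parameters produces a finite set $\mathcal{M}\subset\A^n$ meeting every connected component of the realization of each feasible closed sign condition over $f_1,\dots,f_m$, presented as a list $\mathcal{G}$ of geometric resolutions. Since a closed sign condition $\sigma\in\{\le,=,\ge\}^m$ is feasible if and only if some point of $\R^n$ satisfies it, and every such component is hit by $\mathcal{M}$, a closed sign condition is feasible precisely when it is satisfied by some point of $\mathcal{M}$; hence it suffices to determine, for each point represented in $\mathcal{G}$, the sign vector of $(f_1,\dots,f_m)$ at that point and then collect the induced closed sign conditions. (Here, unlike the open case, no analogue of Proposition~\ref{lista_completa_cond_signo} is needed: the feasible closed sign conditions are exactly those appearing as, or implied by weakening, the sign vectors realized on $\mathcal{M}$, and this weakening does not enlarge the list of \emph{closed} conditions beyond what is already directly read off.)

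Next I would account for the cost of the sign evaluations. Each geometric resolution in $\mathcal{G}$ has its univariate polynomials of degree bounded by the B\'ezout number $\binom{n-1}{s-1}d^{n}$ associated with the corresponding $s=\#S$ (the factor $d^n$ coming from (\ref{bezout_number}) and the trivial bound $d^{s}(d-1)^{n-s}\le d^n$), and for each fixed $s$ the number of such resolutions is $2^{s}\binom{m}{s}$ times a polynomial-in-$n$ factor coming from the recursion over $k$, as in the complexity analysis of Theorem~\ref{algoritmopuntosencomponentescerradas}. For a single geometric resolution with polynomials of degree $\le\delta$, the procedure recalled after Proposition~\ref{lista_completa_cond_signo} computes the composed polynomials $f_i(v_1(U),\dots,v_n(U))\bmod q(U)$ in $O(L\delta\log(\delta)\log\log(\delta))$ operations and then determines the signs at the roots of $q$ using \cite[Section~3]{Canny93} in $O(m\delta^{\omega})$ operations, for a total of $O(L\delta\log(\delta)\log\log(\delta)+m\delta^{\omega})$. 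Substituting $\delta=\binom{n-1}{s-1}d^{n}$ and summing over all resolutions, the second contribution gives the term $m\,d^{\omega n}\binom{n-1}{s-1}^{\omega}$ per value of $s$; dividing through by the common factor $\binom{n-1}{s-1}^{2}d^{2n}$ extracted in the statement yields the $m\,d^{(\omega-2)n}\binom{n-1}{s-1}^{\omega-2}$ summand, while the $L$-dependent part is absorbed into the $2^{s}n^{6}(L+d+n^{2})\log^2(d)(\log(n)+\log\log(d))^2$ summand, which is exactly the complexity of producing $\mathcal{M}$ from Theorem~\ref{algoritmopuntosencomponentescerradas}. Adding the two contributions over $1\le s\le\min\{m,n\}$ gives the claimed bound.

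The main obstacle, such as it is, is purely bookkeeping: one must check that the degree bound $\delta=\binom{n-1}{s-1}d^{n}$ is uniform over all geometric resolutions attached to a given cardinality $s$ (including the ones arising at the recursive levels $k\ge2$, where the ambient dimension drops to $n-k+1$ but the bound only improves), and that the per-resolution evaluation cost, once multiplied by the number $2^{s}\binom{m}{s}\cdot\mathrm{poly}(n)$ of resolutions and compared against the already-present factor $\binom{n-1}{s-1}^{2}d^{2n}$, produces precisely the two summands displayed. No new geometric input is required beyond Theorem~\ref{algoritmopuntosencomponentescerradas} and the cited complexity of \cite{Canny93}; the only subtlety is to confirm that reading off closed sign conditions from the realized sign vectors on $\mathcal{M}$ already yields \emph{all} feasible closed sign conditions, which follows because $\mathcal{M}$ meets every component of every such realization and the defining (non-strict) inequalities are closed conditions preserved under taking the point of $\mathcal{M}$ in the component.
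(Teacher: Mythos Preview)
Your proposal is correct and follows essentially the same approach as the paper: the paper's own argument is simply the sentence preceding the statement (``Proceeding as in Theorem~\ref{allsignconditions}, from a finite set $\mathcal{M}$ intersecting each feasible sign condition \dots\ we can obtain the list of all closed sign conditions \dots\ by evaluating their signs at the points in $\mathcal{M}$'') together with the $\square$, and you have spelled out exactly that, correctly noting that because $\mathcal{M}$ actually lies in each realization (the sets being closed) one reads off the feasible closed sign conditions directly from the realized sign vectors without needing an analogue of Proposition~\ref{lista_completa_cond_signo}. Your bookkeeping of the complexity---bounding each resolution's degree by $\binom{n-1}{s-1}d^n$, observing that the recursive levels $k\ge 2$ only improve this bound, and splitting the cost into the $\mathcal{M}$-construction term from Theorem~\ref{algoritmopuntosencomponentescerradas} and the $m\delta^{\omega}$ sign-evaluation term---matches the paper's implicit computation.
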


\section{Some particular cases}

\subsection{The bivariate case}\label{m_polinomios_dos_variables}

Here we will show that when $n=2$, Algorithm \ref{algoritmo_general}
solves our main problem for an \emph{arbitrary} finite family of
polynomials.

\begin{lemma} \label{polysuder} \label{dospolis} Let $f \in \C[x_1, x_2]$ be a nonzero
polynomial with no factors in $\C[x_1]\setminus \{0\}$ and $f =
\prod_{1\le i \le a} p_i^{\delta_i}$ its irreducible factorization
in $\C[x_1, x_2]$. Let $g_1, g_2$ be polynomials satisfying
conditions (H), with $g_1$ relatively prime to $f$. Let $F_1 =
(1-t)f + t g_1$ and $F_2 =(1-t)\frac{\partial f}{\partial x_2} +t
g_2$ and $V$ be the variety defined in (\ref{hatV}).
 If $z \in
\C^2$ satisfies that either there is an index $i_0$ such that
$p_{i_0}(z) = \frac{\partial p_{i_0}}{\partial x_2 }(z) = 0$ or
there are indices $i_1 \ne i_2$ such that $p_{i_1}(z) = p_{i_2}(z) =
0$, then $z \in \pi_x(V\cap \{t=0\})$.
\end{lemma}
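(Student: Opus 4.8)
The plan is to show that $(0,z)$ lies on the subvariety $V$ of the deformation~(\ref{hatV}): that is, on an irreducible component of $\hat V=V(F_1,F_2)\subset\A^1\times\A^2$ on which the coordinate $t$ is non-constant. Since $t(0,z)=0$, such a component meets $\{t=0\}$ at $(0,z)$, and then $z\in\pi_x(V\cap\{t=0\})$ as required.

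First I would dispose of the routine facts. Differentiating $f=\prod_i p_i^{\delta_i}$ gives $\partial f/\partial x_2=\sum_i\delta_i\,p_i^{\delta_i-1}\,(\partial p_i/\partial x_2)\,\prod_{j\ne i}p_j^{\delta_j}$, and each summand vanishes at $z$ in case (a) (the $i_0$-th one because $\delta_{i_0}\ge 2$ or $(\partial p_{i_0}/\partial x_2)(z)=0$, the others because $p_{i_0}^{\delta_{i_0}}$ divides them) and in case (b) (each summand is divisible by $p_{i_1}$ or by $p_{i_2}$); hence $f(z)=(\partial f/\partial x_2)(z)=0$, so $(0,z)\in\hat V$ and $z$ lies in the set $W=\{x\in\C^2:f(x)=0,\ (\partial f/\partial x_2)(x)=0\}$ to which Lemma~\ref{finitud} applies. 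Since $f$ has no factor in $\C[x_1]$, each $p_i$ satisfies $\partial p_i/\partial x_2\ne 0$ and $p_i\nmid\partial p_i/\partial x_2$, so a valuation count shows that $p_i$ divides $\partial f/\partial x_2$ precisely when $\delta_i\ge 2$; thus the one-dimensional components of $W$ are the curves $\{p_i=0\}$ with $\delta_i\ge 2$, and $V^{(0)}=\{0\}\times\bigcup_{\delta_i\ge 2}\{p_i=0\}$. If $z$ lies on none of these curves, it is an isolated point of $W$, and Lemma~\ref{finitud} gives $z\in\pi_x(V\cap\{t=0\})$.

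The remaining case — $z$ lying on some $\{p_{i_0}=0\}$ with $\delta_{i_0}\ge 2$ — is the heart of the matter, and I would handle it by a local analysis at $z$. If $z$ is a common zero of $g_1$ and $g_2$, then $F_1(t,z)=F_2(t,z)=0$ for every $t$, so $\A^1\times\{z\}$ is a component of $\hat V$ lying in $V$ and we are done; so assume, after exchanging the roles of $F_1$ and $F_2$ if necessary, that $g_1(z)\ne 0$. Work in the analytic local ring $\mathcal{O}$ of $\A^2$ at $z$, and write $f=\big(\prod_{i\in I}p_i^{\delta_i}\big)u_0$ with $u_0\in\mathcal{O}^{\times}$ and $I=\{i:p_i(z)=0\}$; then $\partial f/\partial x_2=\big(\prod_{i\in I}p_i^{\delta_i-1}\big)\,Q$, where $Q=u_0\sum_{i\in I}\delta_i\,(\partial p_i/\partial x_2)\prod_{j\in I\setminus\{i\}}p_j+\big(\prod_{i\in I}p_i\big)\,\partial u_0/\partial x_2$. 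Two properties of $Q$ drive the argument: for each $i\in I$ one has $Q\equiv u_0\,\delta_i\,(\partial p_i/\partial x_2)\prod_{j\in I\setminus\{i\}}p_j\not\equiv 0\pmod{p_i}$, so $p_i\nmid Q$; and $Q(z)=0$, because every term of $Q$ vanishes at $z$ when $|I|\ge 2$, while for $I=\{i_0\}$ one gets $Q(z)=u_0(z)\,\delta_{i_0}\,(\partial p_{i_0}/\partial x_2)(z)$, which is $0$ precisely by hypothesis (a). Since $g_1(z)\ne 0$, $V(F_1)$ is, near $(0,z)$, the smooth graph $t=\phi(x)$ with $\phi=-f/(g_1-f)=-f\cdot e$, $e\in\mathcal{O}^{\times}$; substituting into $F_2=(1-t)\,\partial f/\partial x_2+tg_2$ gives $\psi(x):=F_2(\phi(x),x)=\partial f/\partial x_2+f\rho$ for some $\rho\in\mathcal{O}$, hence $\psi=\big(\prod_{i\in I}p_i^{\delta_i-1}\big)\,\widetilde Q$ with $\widetilde Q=Q+\big(\prod_{i\in I}p_i\big)u_0\rho$, so $\widetilde Q(z)=Q(z)=0$ while still $p_i\nmid\widetilde Q$ for all $i\in I$. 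Therefore $\{\widetilde Q=0\}$ has an analytic branch through $z$ not contained in $\{f=0\}=\bigcup_{i\in I}\{p_i=0\}$ (otherwise all its branches through $z$ would lie in the curves $\{p_i=0\}$, forcing $\prod_{i\in I}p_i\mid\widetilde Q$, a contradiction), and lifting that branch through the graph $t=\phi(x)$ yields a branch of $\hat V$ through $(0,z)$ on which $t=\phi\not\equiv 0$; the component of $\hat V$ carrying it therefore lies in $V$, so $(0,z)\in V\cap\{t=0\}$.

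The step I expect to be the real obstacle is exactly this last, local one: making the multiplicity bookkeeping precise, and verifying that the degenerate configurations — $z$ a singular point of $\{p_{i_0}=0\}$, several factors of $f$ passing through $z$, or $g_1(z)=0$ — are all covered by the same computation, for instance by running it along a Puiseux parametrization of a branch of $\{p_{i_0}=0\}$ through $z$. The algebraic identities recorded above are routine.
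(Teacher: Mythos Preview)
Your argument is essentially correct, but the route is quite different from the paper's, and the one genuinely delicate point is exactly the one you flag at the end.

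\textbf{Comparison with the paper.} The paper never localizes. It works directly with the ideal $I=(F_1,F_2)\subset\K[t,x_1,x_2]$ and proves the explicit equalities
\[
(I:t^\infty)=(I:t)=(F_1,F_2,\ h_2g_1-h_1g_2),
\]
where $h=\gcd(f,f')=\prod_i p_i^{\delta_i-1}$, $h_1=f/h=\prod_i p_i$, and $h_2=f'/h$. Since $V\cap\{t=0\}=V\big((I:t^\infty)+(t)\big)$, it then suffices to note that each hypothesis~(a),~(b) forces $h_1(z)=h_2(z)=0$, hence $(0,z)\in V(I:t^\infty)$. This is a short, purely algebraic computation; the coprimality of $g_1$ with $f$ is used to show $(I:t^2)\subset(I:t)$. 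Your approach instead splits off the isolated-point case via Lemma~\ref{finitud} and then, on the multiple component $\bigcup_{\delta_i\ge 2}\{p_i=0\}$, parametrizes $V(F_1)$ as a graph and analyzes the residual factor $\widetilde Q$. The paper's argument is shorter and avoids any discussion of local branches; yours is more geometric and makes visible \emph{why} a deformation curve through $(0,z)$ must exist.

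\textbf{The soft spot.} Your implication ``all branches of $\{\widetilde Q=0\}$ lie in $\bigcup_i\{p_i=0\}$ $\Rightarrow$ $\prod_i p_i\mid\widetilde Q$'' is not valid as stated when some $p_i$ is analytically reducible at $z$ (e.g.\ a node): a single local branch $q_\alpha$ of $p_i$ could in principle divide $\widetilde Q$ without $p_i$ doing so. The fix is to run the same congruence argument modulo each \emph{local} irreducible factor $q_\alpha$ of $\prod_{i\in I}p_i$ in $\mathcal O$: one checks $q_\alpha\nmid\partial q_\alpha/\partial x_2$ (no $p_i$ lies in $\C[x_1]$ forces this), $q_\alpha\nmid q_\beta$ for $\beta\ne\alpha$, and $q_\alpha\nmid p_j$ for $j\ne i$ (since $V(p_i)\cap V(p_j)$ is finite), whence $q_\alpha\nmid\widetilde Q$. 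Then $\widetilde Q$, being a nonunit with $\widetilde Q(z)=0$, has an irreducible factor not among the $q_\alpha$, and your lifting goes through. The same refinement handles the ``exchange $F_1\leftrightarrow F_2$'' case, where the hypothesis $\gcd(g_1,f)=1$ is exactly what guarantees $q_\alpha\nmid g_1$. With that adjustment your proof is complete.
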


\begin{proof}{Proof:}
In order to simplify the notation, we write $f'= \frac{\partial
f}{\partial x_2}$. Set $I = (F_1, F_2)\subset \K[t,x_1,x_2]$. Then
$V \cap \{ t = 0\} = V( (I:t^\infty) + (t))$.

Let $h = \gcd(f, f')=\prod_{i}p_i^{\delta_i - 1}$, $h_1 =
f/h=\prod_{i}p_i$ and $h_2 = f'/h= \sum_{i}\delta_i\frac{\partial
p_i}{\partial x_2}(\prod_{j\ne i}p_j)$. We will  show that the
equalities $(I: t^\infty) = (I:t) = (F_1, F_2, h_2g_1 - h_1g_2)$
hold. Then, since each condition in the lemma implies that $h_1(z) =
h_2(z) = 0$, we deduce that $(0, z) \in V(I:t^\infty)$ and,
therefore, that $z \in \pi_x(V\cap \{t=0\})$.

To prove the second equality, first note that $(h_2g_1 - h_1g_2)t =
h_2 F_1 - h_1 F_2,$ which shows the inclusion $\supset$. Now, if
$p(t,x) \in (I:t)$, we have $p(x, t)t = (\alpha_1(t,x)t +
\alpha_0(x))F_1 + (\beta_1(t,x)t + \beta_0(x))F_2$ for polynomials
$\alpha_1, \alpha_0, \beta_1, \beta_0$. Substituting $t=0$, we
obtain $\alpha_0f = -\beta_0f'$ and, dividing by $h$, $\alpha_0h_1 =
-\beta_0h_2$. Then, there exists $c \in \C[x]$ such that $\alpha_0 =
ch_2$ and $\beta_0 = -ch_1$ and therefore, $p(t,x) = \alpha_1(t,x)
F_1 + \beta_1(t,x)F_2 + c(h_2g_1 - h_1g_2)$.

The first equality will be proved by showing that $(I:t^2) \subset
(I:t)$. Let $p(x, t) \in (I:t^2)$. Then, $p(x, t)t^2 = (a_2(x, t)t^2
+ a_1(x)t + a_0(x))F_1 + (b_2(x, t)t^2 + b_1(x)t + b_0(x))F_2$ for
certain polynomials $a_i, b_j$. Setting $t=0$ in this identity, it
follows that there is a polynomial $c_1 \in \C[x]$ such that $a_0 =
c_1h_2$ and $b_0 = -c_1h_1$. Now, by looking at the terms of degree
$1$ in $t$, we have that $a_0g_1 + b_0g_2 = -a_1f - b_1f'$. Then,
$c_1(h_2g_1 - h_1g_2) = h(-a_1h_1 - b_1h_2)$. We deduce that $h$
divides $c_1$: for every $i$ with $\delta_i>1$, we have that $p_i
\mid h_1$ and $p_i\nmid h_2 g_1$ (since $f$ and $g_1$ are relatively
prime); so, $p_i\nmid(h_2g_1 - h_1g_2)$ and, therefore
$p_i^{\delta_i - 1}\mid c$. If $c_2 \in \C[x]$ satisfies $c_1 =
c_2h$, we have $p(x, t)t = (a_2t + a_1)F_1 + (b_2t + b_1)F_2 +
c_2(f'g_1 - fg_2)$. Since $f'g_1-fg_2 = -(g_2 - f')F_1 + (g_1 -
f)F_2\in I$, we conclude that $p \in (I:t)$.
\end{proof}

\begin{proposition}\label{encuentraextremosdosvariables} Let $f_1,\dots, f_m$ be
arbitrary bivariate real polynomials and $\sigma\in \{<, = ,
>\}^m$. Then, after a generic linear change of variables, for each
connected component $C$ of $\{ x\in \R^n \mid f_1(x)\sigma_1 0 ,
\dots,f_m(x)\sigma_m 0 \}$ we have that $Z(C) \subset \bigcup_{S
\subset \{1, \dots, m\},\, 1\le \# S \le 2} \pi_x(V_{S}\cap
\{t=0\})$ where the varieties $V_S$ are defined from type 1 initial
systems.
\end{proposition}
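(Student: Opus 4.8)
The plan is to put the coordinates in generic position and then reduce the statement to a local study of $\mathcal{P}:=\{x\in\R^2\mid f_1(x)\sigma_1 0,\dots,f_m(x)\sigma_m 0\}$ near an $x_1$-extremal point of a connected component $C$, feeding the configurations that arise to Lemma~\ref{polysuder} and Lemma~\ref{finitud}. First I would fix a generic linear change of variables over $\Q$ realizing simultaneously the conclusions of Proposition~\ref{cambio_var_gen_finit_extrem} (so that $Z(C)$ is finite, and $Z_{\inf}(C)$, $Z_{\sup}(C)$ are nonempty whenever the corresponding extremum of $\pi_1$ on $C$ is attained), making each $f_i$ quasi-monic in $x_2$ — equivalently, with no irreducible factor in $\C[x_1]\setminus\C$, a condition already used inside the proof of Proposition~\ref{cambio_var_gen_finit_extrem} — and meeting the finitely many Zariski-open conditions that make all the deformations from type~1 initial systems (Definition~\ref{sistemafactorizable}, Lemma~\ref{sistema_factorizable_apropiado}) invoked below well defined. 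Since the systems (\ref{J_S}), (\ref{cond_1}), (\ref{cond_s_grande}), and hence the varieties $W_S$ and $V_S$, do not distinguish maxima from minima of $x_1$, it suffices to prove that every $z\in Z_{\inf}(C)$ belongs to $\pi_x(V_S\cap\{t=0\})$ for some $S\subset\{1,\dots,m\}$ with $1\le\#S\le2$ whose $V_S$ comes from a type~1 initial system.

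So fix $z\in Z_{\inf}(C)$ and put $S_0=\{i\mid f_i(z)=0\}$. Since $C$ accumulates at $z$ while every $f_i$ with $f_i(z)\ne0$ keeps a fixed feasible sign near $z$, we get $S_0\ne\emptyset$ and $E_\sigma\subset S_0$, where $E_\sigma=\{i\mid\sigma_i=``="\}$. Let $\mathcal{I}_z$ be the set, up to scalars, of irreducible factors over $\C$ of $\prod_{i\in S_0}f_i$ vanishing at $z$; for each $i\in S_0$ the polynomial $f_i$ factors near $z$ as $p^{\delta_i}u_i$ with $u_i$ a local unit and $p$ ranging over the elements of $\mathcal{I}_z$ dividing $f_i$. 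The argument splits according to $\#\mathcal{I}_z$.

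Assume first $\#\mathcal{I}_z\ge2$. If some $i_0\in S_0$ is such that $f_{i_0}$ has two distinct irreducible factors vanishing at $z$, then Lemma~\ref{polysuder}, applied to $f=f_{i_0}$ with a type~1 initial system for $s=1$ — here $g_1=\prod_j(x_1-j)$ is coprime to $f_{i_0}$ because $f_{i_0}$ has no factor in $\C[x_1]$, and (H) holds by Lemma~\ref{sistema_factorizable_apropiado} — gives $z\in\pi_x(V_{\{i_0\}}\cap\{t=0\})$. Otherwise every $f_i$, $i\in S_0$, has a unique irreducible factor $p_i$ vanishing at $z$, and since $\#\mathcal{I}_z\ge2$ we may choose $i\ne i'$ in $S_0$ with $p_i\ne p_{i'}$; then $f_i$ and $f_{i'}$ share no irreducible factor vanishing at $z$, so $z$ is an isolated point of $W_{\{i,i'\}}=\{f_i=f_{i'}=0\}$, and Lemma~\ref{finitud} — in the variant for $n$ equations in $n$ variables noted after its proof, applied with the type~1 initial system for $s=n=2$ — gives $z\in\pi_x(V_{\{i,i'\}}\cap\{t=0\})$.

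Assume finally $\#\mathcal{I}_z=1$, say $\mathcal{I}_z=\{p\}$. As the $f_i$ are real and $z$ is real, $\bar p\in\mathcal{I}_z$, so $p$ may be taken real, and $f_i=p^{\delta_i}u_i$ near $z$ with $\delta_i\ge1$ and $u_i$ a real local unit for all $i\in S_0$. If $\frac{\partial p}{\partial x_2}(z)=0$, then $p$ is an irreducible factor of $f_{i_1}$ for any fixed $i_1\in S_0$, with $p(z)=\frac{\partial p}{\partial x_2}(z)=0$, and Lemma~\ref{polysuder} gives $z\in\pi_x(V_{\{i_1\}}\cap\{t=0\})$. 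The case $\frac{\partial p}{\partial x_2}(z)\ne0$ I would rule out: then $\{p=0\}$ is smooth and transverse to the $x_2$-direction at $z$, so near $z$ it is the real graph $x_2=\varphi(x_1)$ over an open interval $I\ni z_1$; analyzing $\mathcal{P}$ near $z$ through the signs of the $p^{\delta_i}u_i$ — constraints with $i\in E_\sigma$ force $p=0$; strict constraints with $\delta_i$ even reduce, by $z\in\overline{\mathcal{P}}$, to $p\ne0$; strict constraints with $\delta_i$ odd to a fixed side of $\{p=0\}$ — one finds that near $z$ the set $\mathcal{P}$ equals $\{p=0\}$, one side of it, or its complement (any other combination being empty, contradicting $z\in\overline{\mathcal{P}}$), and each of these contains points with $x_1<z_1$; hence $C$ contains points with $x_1<z_1$, contradicting $z_1=\inf\pi_1(C)$. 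This last verification — that a boundary branch transverse to the projection $\pi_1$ cannot carry an $x_1$-extremal point — is the crux of the proof; the remaining cases are immediate once Lemmas~\ref{polysuder} and~\ref{finitud} are available.
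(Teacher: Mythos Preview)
Your proof is correct and follows essentially the same approach as the paper: Lemma~\ref{polysuder} for the singleton case, Lemma~\ref{finitud} for the pair case, and the implicit-function-theorem graph argument to dispose of a single transverse branch. Your case split on $\#\mathcal{I}_z$ makes explicit the contradiction in the one-factor transverse case that the paper leaves somewhat implicit in its ``we may assume that $f_{i_1}$ has a unique irreducible factor $q$ \dots\ not an associate to $p$''.
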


\begin{proof}{Proof:} After a generic linear change of variables we
may assume that, for each connected component $C$,  either $\pi_1(C)
= \R$ or $Z(C)$ is a non-empty finite set (see Proposition
\ref{cambio_var_gen_finit_extrem}).

Assume $Z_{\inf}(C) \ne \emptyset$ and let $z=(z_1, z_2) \in
Z_{\inf}(C)$. Since $z\in
\partial C$,
there is an index $i_0$ such that $f_{i_0} \ne 0$ and $f_{i_0}(z) =
0$. If $f_{i_0}$ has two or more non-associate irreducible factors
in $\C[x]$ vanishing at $z$, or an irreducible factor vanishing at
$z$ and whose derivative with respect to $x_2$ also vanishes at $z$,
by Lemma \ref{dospolis}, $ z \in \pi_x(V_{\{i_0\}}\cap \{ t=0\})$
(note that, because of the generic change of variables, $f_{i_0}$
does not have factors of the form $x_i -\alpha$). Otherwise, there
is a unique irreducible factor $p$ of $f_{i_0}$ vanishing at $z$
which must have all real coefficients (since its complex conjugate
also divides $f_{i_0}$ and vanishes at $z$) such that
$\frac{\partial p}{\partial x_2} (z) \ne 0$.

By the implicit function theorem applied to $p$ at the point $z$,
there is a continuous curve $(x_1, x_2(x_1))$ defined in a
neighborhood of $z_1$, and a neighborhood of $z$ such that the
polynomial $p$ (as well as any power of $p$ and also $f_{i_0}$) has
constant signs above, below and on the curve in this neighborhood.
Since $z_1 = \inf \pi_1(C)$, there must be an index $i_1 \ne i_0$
such that $f_{i_1}(z) = 0$. Moreover, we may assume that  $f_{i_1}$
has a unique irreducible factor $q$ vanishing at $z$ that is not an
associate to $p$. In this second case, $z$ is an isolated point of
$W_{\{i_0, i_1\}}= V(f_{i_0}, f_{i_1})$ and then, by Lemma
\ref{finitud}, $ z \in \pi_x(V_{\{i_0, i_1\}}\cap \{ t = 0 \})$.
\end{proof}

Using this proposition, following the proof of Theorem
\ref{algoritmopuntosencomponentes}, we have:

\begin{theorem}\label{teoremadosvariables}
Algorithm \ref{algoritmo_general} is a probabilistic procedure that,
given polynomials $f_1, \dots, f_m \in \K[x_1, x_2]$ of degrees
bounded by $d\ge 2$ that are encoded by an slp of length $L$,
  obtains a finite set $\mathcal{M}$ intersecting the closure of
each connected component of the realization of every sign condition
on the polynomials within complexity
$O(m^2d^4\log(d)\log\log(d)(L+\log^2(d) \log\log(d)))$. \hfill
$\square$
\end{theorem}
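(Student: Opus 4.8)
The plan is to follow the template of Theorem~\ref{algoritmopuntosencomponentes}, replacing the appeal to Proposition~\ref{alcanza_con_de_a_n} (which relied on Hypothesis~\ref{la_hipotesis}) by the unconditional bivariate result Proposition~\ref{encuentraextremosdosvariables}. First I would invoke Proposition~\ref{idea_algoritmo}, so that it suffices to check that the set $\{p\}\cup\bigcup_{k=1}^{2}\bigcup_{C\in\mathcal{C}(k,p)}Z(C,k)$ is contained in the output $\mathcal{M}$, where $\mathcal{C}(k,p)$ denotes the connected components of the slices of the realizations of feasible sign conditions. Assuming the random linear change of variables in Step~1 is generic in the sense of Propositions~\ref{cambio_var_gen_finit_extrem} and~\ref{encuentraextremosdosvariables}, for $k=1$ each $Z(C,1)$ is contained in $\bigcup_{1\le\#S\le 2}\pi_x(V_S\cap\{t=0\})$, and Step~3 of the algorithm (run with $n=2$, so $k$ ranges only over $k=1$ and the sets $S$ have $\#S\le 2$) computes geometric resolutions of exactly these sets via type~1 deformations. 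For $k=2$, as in the proof of Theorem~\ref{algoritmopuntosencomponentes}, one treats $x_2$ as the first variable for the univariate polynomials $f_i(p_1,x_2)$, so that $\bigcup_{C\in\mathcal{C}(2,p)}Z(C,2)\subset\bigcup_{i=1}^m\{f_i(p_1,x_2)=0\}$, which together with $p$ is added to $\mathcal{M}$ in Step~4. This establishes correctness.

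The one subtlety I would be careful about is that Proposition~\ref{encuentraextremosdosvariables} is stated for the realizations $\{f_1\sigma_1 0,\dots,f_m\sigma_m 0\}$ themselves, not for their slices by $\{x_1=p_1\}$; but a slice $\mathcal{P}\cap\{x_1=p_1\}$ in $\R^2$ is a subset of the line $\{x_1=p_1\}$ cut out by the univariate polynomials $f_i(p_1,x_2)$, so its connected components are points or intervals, and their extremal points in $x_2$ are handled directly in Step~4 as above --- no recursive deformation is needed in the bivariate case. I would also note, as in the proof of Theorem~\ref{algoritmopuntosencomponentes}, that genericity of $p\in\Q^2$ is used only through Proposition~\ref{encuentraextremosdosvariables} applied to the original polynomials; there is no descent through many variables, so the probabilistic genericity conditions reduce to finitely many nonvanishing conditions on the change of variables and on $p_1$.

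For the complexity, I would count the work in Step~3 together with Step~4. In Step~3, for $1\le s\le 2$ there are $\binom{m}{s}$ subsets $S$, and for each the deformation from a type~1 initial system is run via Proposition~\ref{algoritmopararesgeom} with $n=2$; the relevant B\'ezout number is $D\le\binom{1}{s-1}d^2\le d^2$, and the slp encoding the equations defining $W_S$ has length $O(L)$ (for $n=2$ the overhead $n^3$ is a constant). Plugging $n=2$ and $D=O(d^2)$ into the bound $O\big(n^2D^2\log(D)\log\log(D)(L+n^2d+\log^2(D)\log\log(D))\big)$ of Proposition~\ref{algoritmopararesgeom} gives $O\big(d^4\log(d)\log\log(d)(L+\log^2(d)\log\log(d))\big)$ per pair $(s,S)$, and summing over the $O(m^2)$ choices of $S$ yields $O\big(m^2d^4\log(d)\log\log(d)(L+\log^2(d)\log\log(d))\big)$; Step~4 costs only $O(m)$ slp-evaluations giving univariate polynomials of degree $\le d$, which is absorbed. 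The main obstacle, such as it is, is purely bookkeeping: making sure the reduction in Proposition~\ref{idea_algoritmo} is applied cleanly so that the slice-components $\mathcal{C}(2,p)$ are dispatched by Step~4 rather than requiring a second, vacuous layer of type~1 deformations, and checking that the $n=2$ specialization of the Proposition~\ref{algoritmopararesgeom} estimate indeed collapses the $n^2d$ term into the claimed bound.
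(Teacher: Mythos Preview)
Your proposal is correct and follows exactly the approach the paper intends: the paper's own proof is simply the sentence ``Using this proposition, following the proof of Theorem~\ref{algoritmopuntosencomponentes}, we have'', and you have spelled out precisely that argument, replacing the appeal to Proposition~\ref{alcanza_con_de_a_n} by Proposition~\ref{encuentraextremosdosvariables} for $k=1$ and handling $k=2$ via Step~4. Your complexity bookkeeping with $n=2$, $D\le d^2$, and $\binom{m}{1}+\binom{m}{2}=O(m^2)$ subsets is also the intended specialization.
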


\subsection{A single polynomial}\label{un_polinomio_n_variables}

In this section we will show the procedure described in Section
\ref{sec:closed} can be adapted to solve the problem of computing a
point in the closure of each connected component of $\{ f = 0\}$,
$\{ f> 0 \}$ and $\{f<0\}$ for an arbitrary polynomial $f \in
\K[x_1,\dots, x_n]$.

Let $ d $ be an even positive integer such that $d \ge \deg (f)$.
Let $T$ be the Tchebychev polynomial of degree $d$. We define $g(x)
= n+\frac1q +\sum_{k =1}^n \frac{1}{q-n-1+k} T(x_k) $, where $q$ is
the smallest prime greater than $n$; $F(t,x) = (1-t) f(x) + t g(x)$,
and, for $2 \le i \le n$, $F_i(t,x) = (1-t) \frac{\partial
f}{\partial x_i} (x)+ t \frac{\partial g}{\partial x_i}(x) =
\frac{\partial F}{\partial x_i}(t,x)$.

Note that $g > 0$ over $\R^n$ and, therefore, $F >0$ over the set
$\{f=0\}$. Moreover, this is a deformation with a type $2$ initial
system (see Definition \ref{sistemaTcheby}). As in Section
\ref{sistemainic}, we let $\hat V = \{F = F_2 = \dots = F_n =0 \}
\subset \A^1 \times \A^n$, and consider its decomposition
(\ref{sistemadeformado}).

\begin{lemma}\label{comphiperconigual} After a generic linear change of variables,
for each connected component $C$ of  $\{f=0\}$,  $\{f>0\}$ or
$\{f<0\}$, we have $Z(C) \subset \pi_x(V \cap \{t=0\})$.
\end{lemma}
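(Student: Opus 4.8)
The plan is to adapt the proof of Proposition~\ref{caso_general_extremales_buscados} to this single–polynomial setting. By Proposition~\ref{cambio_var_gen_finit_extrem} I may assume the generic linear change is such that $Z(C)$ is finite for every connected component $C$ of $\{f=0\}$, $\{f>0\}$ or $\{f<0\}$, and is non-empty whenever $\pi_1(C)$ is bounded below (resp.\ above) on the corresponding side; it then suffices to fix such a $C$ and a point $z\in Z_{\inf}(C)$ (the $\sup$ case being symmetric) and to prove $z\in\pi_x(V\cap\{t=0\})$. Since $z$ lies on $\partial C$ in each case, $f(z)=0$, and as $g>0$ on $\R^n$ we have $F(t,z)=tg(z)\ge0$ with equality only at $t=0$. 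Besides $F$ I would introduce $F^-(t,x):=(1-t)f(x)-tg(x)$; its variety $\hat V^-=V^{(0),-}\cup V^{(1),-}\cup V^-$, decomposed as in~(\ref{hatV}), again comes from a type~$2$ initial system (namely $-g$ and its $x_2,\dots,x_n$–derivatives, i.e.\ a type~$2$ initial system with $\tau_1=-$ in Definition~\ref{sistemaTcheby}), so Lemmas~\ref{finitud} and~\ref{todo_bien_Tcheby} apply to it as well. If $z$ is an isolated point of $W:=\{f=\partial f/\partial x_2=\dots=\partial f/\partial x_n=0\}$ the claim is immediate from Lemma~\ref{finitud}, so the real work is with the non-isolated points (which, by the Sard-type argument of Lemma~\ref{conjunto_candidato_finito}'s proof, all lie in $\mathrm{Sing}(f)$).

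Following the proof of Proposition~\ref{caso_general_extremales_buscados}, I would choose $0<\varepsilon<1$ small enough that $\overline{B(z,\varepsilon)}\cap Z(C)=\{z\}$, that $\overline{B(z,\varepsilon)}\cap\{f=0\}$ lies in the connected component of $\{f=0\}$ through $z$, and that $\varepsilon<|t_0|$ for each of the finitely many $t_0\ne0$ at which $V^{(1)}$ or $V^{(1),-}$ has an irreducible component inside $\{t=t_0\}$ (since $m=1$ there are no systems with $\#S>n$ to exclude). For the cases $\{f>0\}$, $\{f<0\}$, $\{f=0\}$ I would take, respectively,
$$R_t=\overline{B(z,\varepsilon)}\cap\{F(t,\cdot)\ge0\},\quad R_t=\overline{B(z,\varepsilon)}\cap\{F^-(t,\cdot)\le0\},\quad R_t=\overline{B(z,\varepsilon)}\cap\{F(t,\cdot)\ge0,\ F^-(t,\cdot)\le0\},$$
so that $z\in R_t$ for all $t\in[0,1]$ and $R_0$ is $\overline{B(z,\varepsilon)}\cap\{f\ge0\}$, $\overline{B(z,\varepsilon)}\cap\{f\le0\}$, $\overline{B(z,\varepsilon)}\cap C$ respectively. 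As in Proposition~\ref{caso_general_extremales_buscados}, for a suitable $t_1\in(0,\varepsilon)$ the connected component $C'$ of $R_{t_1}$ through $z$ stays uniformly close to $R_0$, the point $w$ minimizing $x_1$ over $C'$ lies in the open ball, and hence $F(t_1,w)=0$ or $F^-(t_1,w)=0$. In the first case $\{F(t_1,\cdot)=0\}\subset\{F^-(t_1,\cdot)=-2t_1g<0\}$, so a neighborhood of $w$ in $\{F(t_1,\cdot)=0\}$ remains in $C'$ and $x_1$ has a local minimum there at $w$; therefore $\partial F/\partial x_i(t_1,w)=0$ for $2\le i\le n$ and $F(t_1,w)=0$ — this holds whether or not $w$ is a smooth point of that hypersurface — i.e.\ $(t_1,w)\in\hat V$, hence $(t_1,w)\in V$ by the choice of $\varepsilon$; symmetrically $F^-(t_1,w)=0$ gives $(t_1,w)\in V^-$. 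Letting $\varepsilon\to0$, so $w\to z$ and $t_1\to0$, and using that $V$ and $V^-$ are closed subsets of $\A^1\times\A^n$, I would get $(0,z)\in V\cup V^-$.

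To finish I would identify $V^-\cap\{t=0\}$ with $V\cap\{t=0\}$ via the involution $\phi(t)=t/(2t-1)$ of $\A^1$, which fixes $0$ and $1$ and is a local isomorphism at $0$: for $t\notin\{0,\tfrac12,1\}$ one checks $(t,x)\in\hat V^-$ iff $\big(f,\tfrac{\partial f}{\partial x_2},\dots,\tfrac{\partial f}{\partial x_n}\big)(x)=\tfrac{t}{1-t}\big(g,\tfrac{\partial g}{\partial x_2},\dots,\tfrac{\partial g}{\partial x_n}\big)(x)$ iff $(\phi(t),x)\in\hat V$. Hence $(t,x)\mapsto(\phi(t),x)$ identifies $\hat V^-$ with $\hat V$ away from the fibres over $t=\tfrac12$, carrying $V^{(0),-}$ onto $V^{(0)}$ and $V^-$ onto $V$; fixing $\{t=0\}$, it gives $\pi_x(V^-\cap\{t=0\})=\pi_x(V\cap\{t=0\})$, so $(0,z)\in V\cup V^-$ yields $z\in\pi_x(V\cap\{t=0\})$ in all three cases.

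The step I expect to fight with is the confinement in the open cases $\{f>0\}$ and $\{f<0\}$, where $R_0$ may be strictly larger than $\overline C\cap\overline{B(z,\varepsilon)}$ (it can contain points of $\{f=0\}$ with first coordinate $<z_1$, or points of neighboring sign-condition components), so the bound $\nu>0$ used in Proposition~\ref{caso_general_extremales_buscados} is not automatic; one must invoke the genericity of the change of variables and the finiteness of the number of sign-condition components of $f$ to show that the component $C'$ of $R_{t_1}$ through $z$ still lies in a prescribed small neighborhood of $z$, so that $w\to z$ as $\varepsilon\to0$. The case $\{f=0\}$ has no such difficulty, since there $R_0=\overline C\cap\overline{B(z,\varepsilon)}\subset\{x_1\ge z_1\}$.
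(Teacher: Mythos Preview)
Your treatment of the case $C\subset\{f=0\}$ is correct: there $R_0=C\cap\overline{B(z,\varepsilon)}\subset\{x_1\ge z_1\}$, so the separation constant $\nu$ is positive and the confinement argument of Proposition~\ref{caso_general_extremales_buscados} goes through verbatim. Your involution $\phi(t)=t/(2t-1)$ identifying $\pi_x(V^-\cap\{t=0\})$ with $\pi_x(V\cap\{t=0\})$ is also correct and is a pleasant observation, though it turns out to be unnecessary.

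The gap you flag in the open cases is genuine, and the fix you propose does not close it. Take $n=2$ and $f$ a product of two distinct real linear forms vanishing at the origin. After \emph{any} linear change of variables $f$ is again of this type, and for a non-empty Zariski-open set of such changes the two $\{f>0\}$ wedges are a ``right'' wedge $C$ with $Z_{\inf}(C)=\{0\}$ and an opposite ``left'' wedge $C''\subset\{x_1<0\}$. Then $R_0=\{f\ge 0\}\cap\overline{B(0,\varepsilon)}$ contains $\overline{C''}\cap\overline{B(0,\varepsilon)}$, which meets $\partial B(0,\varepsilon)\cap\{x_1\le 0\}$; hence $\nu=0$ and the argument collapses. (Here $z=0$ happens to be isolated in $W$, so your Lemma~\ref{finitud} shortcut rescues the conclusion, but nothing in your sketch explains why, for $n\ge 3$ and $z$ on a positive-dimensional component of $W$, the same boundary-touching cannot occur.) The obstruction is local---two pieces of $\{f\ge 0\}$ meeting at $z$ with one extending into $\{x_1<z_1\}$---and is unaffected by moving the coordinate axes generically.

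The paper avoids this entirely by working with the \emph{zero set} $\{F(t_1,\cdot)=0\}$ rather than the sublevel set, and by allowing \emph{both signs} of $t_1$ (so only $F$ is needed, never $F^-$). For $\{f>0\}$ one takes $t_1\in(-\varepsilon,0)$; then $F(t_1,z)=t_1 g(z)<0$, while on a semialgebraic arc $\gamma$ from $z$ into $C\cap B(z,\varepsilon)$ one has $F(t_1,\gamma(1))>0$ for $|t_1|$ small, so $F(t_1,\cdot)$ vanishes at some $\gamma(u)\in C$. The key confinement is now that the connected component $C'$ of $\{x\in B(z,\varepsilon):F(t_1,x)=0\}$ through $\gamma(u)$ cannot meet $\{f=0\}$, since on $\{f=0\}$ one has $F(t_1,\cdot)=t_1 g<0$; hence $C'$ stays inside the component of $B(z,\varepsilon)\setminus\{f=0\}$ containing $\gamma(u)$, which lies in $C$. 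A short compactness argument then places the minimum of $x_1$ over $C'$ in the open ball, yielding a point of $\hat V$ and hence of $V$. The case $\{f<0\}$ is symmetric with $t_1>0$, and $\{f=0\}$ is handled by another variant, again with $F$ alone. If you replace your sublevel-set construction in the open cases by this zero-set argument, your proof goes through without $F^-$ or the involution.
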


\begin{proof}{Proof:}  Consider first a connected component $C$ of $\{f=0\}$
(for a similar approach in this case with an alternative
deformation, see \cite{RRS}). Let $z \in Z_{\inf}(C)$. Take
$\varepsilon
> 0$ such that $\overline{B(z, \varepsilon)}$ meets neither a
connected component of $\{f=0\}$ different from $C$ nor the finite
set $Z(C) \setminus \{z\}$, and $\varepsilon < |t_0|$ for each $t_0
\in \C$ such that $V^{(1)}$ has a connected component contained in
$\{t = t_0\}$.

Let $\mu \in (z_1, z_1 + \varepsilon)$ be such that $\partial B(z,
\varepsilon) \cap C \subset \{x_1 > \mu\}$. Without loss of
generality, we may assume that $f$ is positive over the compact set
$\partial B(z, \varepsilon) \cap \{x_1 \le \mu\}$. Let
$\varepsilon_0 \in (0, \varepsilon)$ be such that $F$ is positive
over $[-\varepsilon_0, \varepsilon_0] \times (\partial B(z,
\varepsilon) \cap \{x_1 \le \mu\})$. Now, let  $y \in B(z,
\varepsilon)$ with $y_1 < z_1$ (thus, $f(y) \ne 0$). Since
$F(-\varepsilon_0, z) < 0$, $F(\varepsilon_0, z)> 0$ and $F(0, y)
\ne 0$, there is a point $(t_1, \tilde y)$ in the union of the line
segments joining $(-\varepsilon_0, z)$, $(0, y)$, and $(0,y)$,
$(\varepsilon_0, z)$ respectively, such that $F(t_1, \tilde y) = 0$.
We have $t_1 \ne 0$ and $\tilde y_1 < z_1$. Let $ w\in \{x \in
\overline{B(z, \varepsilon)} \mid F(t_1,x) = 0\}$ be a point at
which the coordinate function $x_1$ attains its minimum over this
compact set. Note that $w\not \in
\partial B(z, \varepsilon)$, since $w_1<\tilde y_1 < z_1 <\mu$ and $F$
is positive over $[-\varepsilon_0, \varepsilon_0] \times (\partial
B(z, \varepsilon) \cap \{x_1 \le \mu\})$. Therefore, $w \in B(z,
\varepsilon)$ and so, $(t_1,w) \in \hat V$. Moreover, as $0< | t_1|
<\varepsilon$, we have $(t_1,w) \in V$. Since $0 < |(t_1, w) - (0,
z)| < \sqrt2\varepsilon$, and the construction can be done for an
arbitrary sufficiently small $\varepsilon>0$, it follows that $(0,
z) \in V$.

Assume now that $C$ is a connected component of $\{f>0\}$ and let $z
\in Z_{\inf}(C)$ (then, $f(z) = 0$). Let $\tilde C$ be the connected
component
 of $\{f=0\}$ containing $z$, and $\varepsilon
> 0$ such that $\overline{B(z, \varepsilon)}$ meets neither a
connected component of $\{f=0\}$ different from $\tilde C$ nor the
finite set $Z(C) \setminus \{z\}$, and $\varepsilon < |t_0|$ for
every $t_0 \in \C$ such that $V^{(1)}$ has an irreducible component
included in $\{t = t_0\}$.

Let $\mu \in (z_1, z_1 + \varepsilon)$ be such that $\partial B(z,
\varepsilon) \cap \overline{C} \subset \{x_1 > \mu\}$ and
$\gamma:[0,1] \to \R^n$ a continuous semialgebraic curve such that
$\gamma(0) = z$ and $\gamma((0,1]) \subset C \cap B(z, \varepsilon)
\cap \{x_1 < \mu\}$. Let $C_1$ be the connected component of $C \cap
B(z, \varepsilon)$ with $\gamma((0,1]) \subset C_1$. Take $t_1 \in
(-\varepsilon, 0)$ small enough so that $F(t_1, \gamma(1))>0$. Since
$F(t_1, \gamma(0)) < 0$, there exists $u \in (0, 1)$ such that
$F(t_1, \gamma(u)) =0$. Let $C'$ be the connected component of $\{x
\in B(z, \varepsilon) \ | \ F(t_1, x) = 0\}$ containing $\gamma(u)$.
As $\gamma(u) \in C' \cap C_1$, $C' \cup C_1$ is a connected set.
Therefore  $C' \subset C_1$, as $C' \subset B(z, \varepsilon)
\setminus \tilde C$ and $C_1$ is a connected component of this set.
Now let $K = C' \cup (\overline{B(z, \varepsilon)} \cap \{x_1 \ge
\mu\})$, which is a compact set, since $\overline{C'} = C' \cup
(\partial B(z, \varepsilon)\cap \overline{C'}) \subset C' \cup
(\partial B(z, \varepsilon) \cap \overline{C} ) \subset K$. If $w
\in K$ is a point at which the function $x_1$ attains its minimum
over $K$, then $w \not \in \overline{B(z, \varepsilon)} \cap \{x_1
\ge \mu\}$. Then, $w$ is a minimum of $x_1$ over the set $C' \cap
B(z, \varepsilon) \cap \{x_1 < \mu\}$. Therefore, $(t_1,w) \in \hat
V$, but since $0 < |t_1| <\varepsilon$, we have $(t_1,w) \in V$. As
before, we conclude that $(0, z) \in V$.
\end{proof}

Using this lemma, due to Proposition \ref{idea_algoritmo} and
proceeding as in the proof of Theorem
\ref{algoritmopuntosencomponentescerradas}, we have:

\begin{theorem}
\label{algoritmopuntosencomponentes_casohipersuperf} Given a
polynomial $f \in \K[x_1, \dots,x_n]$ of degree bounded by an even
integer $d$ and encoded by an slp of length $L$, for generic choices
of the parameters required at intermediate steps, the algorithm
obtained from Algorithm \ref{algoritmo_general} modifying step 3.(b)
in order to deal with type $2$ initial systems, computes a finite
set ${\cal M} \subset \A^n$ intersecting the closure of each
connected component of the sets $\{f<0\}$, $\{f=0\}$ and $\{f>0\}$.
The complexity of the algorithm is $O\big(n^5\big(L +  n d +
n^{\Omega - 1} \big)\log^2(d)\big(\log(n) + \log\log( d)\big)^2
d^{2n}\big).$ \hfill $\square$
\end{theorem}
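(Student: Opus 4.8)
The plan is to follow the proof of Theorem~\ref{algoritmopuntosencomponentescerradas} almost verbatim, with two simplifications inherent to the single-polynomial case: the only subset $S\subset\{1,\dots,m\}$ occurring in step~$3$ of Algorithm~\ref{algoritmo_general} is $S=\{1\}$, and the type~$2$ deformation introduced in Section~\ref{un_polinomio_n_variables} (with the fixed sign $\tau=``+"$, so that $g>0$ on $\R^n$) serves \emph{simultaneously} for the three semialgebraic sets $\{f<0\}$, $\{f=0\}$ and $\{f>0\}$ --- this being exactly the content of Lemma~\ref{comphiperconigual}. Thus the modification of step~$3.(\mathrm{b})$ amounts to computing, at each recursion level, a geometric resolution of a finite set containing $\pi_x(V\cap\{t=0\})$ for the deformation of Section~\ref{un_polinomio_n_variables}.

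For correctness, assuming the linear change of variables performed at step~$1$ is generic in the sense of Proposition~\ref{cambio_var_gen_finit_extrem}, and taking $p=(0,\dots,0)$, Proposition~\ref{idea_algoritmo} reduces the problem to showing
\[
\{p\}\cup\Big(\bigcup_{k=1}^{n}\ \bigcup_{C\in\mathcal{C}(k,p)}Z(C,k)\Big)\subset\mathcal{M},
\]
where $\mathcal{C}(k,p)$ is the set of connected components of $\Gamma\cap\{x_1=0,\dots,x_{k-1}=0\}$ with $\Gamma$ a connected component of $\{f<0\}$, $\{f=0\}$ or $\{f>0\}$. For $1\le k\le n-1$ I would regard $x_k$ as the first variable for the polynomial $f(0,\dots,0,x_k,\dots,x_n)\in\R[x_k,\dots,x_n]$; since no assumption on this polynomial is needed, Lemma~\ref{comphiperconigual} (for generic choices of the intermediate parameters, in particular the vector $\alpha$ entering the deformation algorithm) yields $\bigcup_{C\in\mathcal{C}(k,p)}Z(C,k)\subset\pi_x(V_k\cap\{t=0\})$, where $V_k$ is the variety attached, as in (\ref{hatV}), to the type~$2$ deformation of Section~\ref{un_polinomio_n_variables} at recursion level~$k$. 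The modified step~$3.(\mathrm{b})$ computes a geometric resolution of a finite set containing $\pi_x(V_k\cap\{t=0\})$, by the algorithm underlying Proposition~\ref{complalgoritmopararesgeomtipo2} in its $s=1$ version, so these points belong to $\mathcal{M}$. For $k=n$ one has $\bigcup_{C\in\mathcal{C}(n,p)}Z(C,n)\subset\{f(0,\dots,0,x_n)=0\}$, whose geometric resolution, together with the point $p$, is added in step~$4$; this establishes correctness.

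For the complexity bound, at recursion level $k$ we work with $n-k+1$ variables and degree still bounded by the even integer $d$. From the slp of length $L$ encoding $f$, hardwiring the zeros and invoking \cite{BS83} gives an slp of length $O(L)$ for $f(0,\dots,0,x_k,\dots,x_n)$ and all its first order partial derivatives; appending the type~$2$ initial system (computable with $O(nd)$ operations, the Tchebychev polynomials being evaluated by recursion) yields an slp of length $O(L+nd)$ for the deformed system. Its B\'ezout number is $D_k=d(d-1)^{n-k}\le d^{\,n-k+1}$, so Proposition~\ref{complalgoritmopararesgeomtipo2} handles level~$k$ within $O\big(n^3(L+nd+n^{\Omega-1})D_k^2\log^2(D_k)\log\log^2(D_k)\big)$ operations, which, using $\log(D_k)\le n\log(d)$, is bounded by $O\big(n^5(L+nd+n^{\Omega-1})\log^2(d)(\log(n)+\log\log(d))^2\,d^{2(n-k+1)}\big)$. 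Summing over $k=1,\dots,n-1$, the factors $d^{2(n-k+1)}$ form a geometric progression of ratio $d^{-2}\le 1/4$, so $\sum_{k=1}^{n-1}d^{2(n-k+1)}=O(d^{2n})$; since the cost of step~$4$ is negligible, the total is $O\big(n^5(L+nd+n^{\Omega-1})\log^2(d)(\log(n)+\log\log(d))^2\,d^{2n}\big)$, as claimed.

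The point requiring most care is not any single step but the genericity bookkeeping: one must check that the genericity conditions of Proposition~\ref{cambio_var_gen_finit_extrem} (for all $1\le k\le n$ at once, after a \emph{single} change of variables), those implicit in ``after a generic linear change of variables'' in Lemma~\ref{comphiperconigual} for every substituted polynomial $f(0,\dots,0,x_k,\dots,x_n)$, and the genericity of $\alpha$ needed inside Proposition~\ref{complalgoritmopararesgeomtipo2}, can all be satisfied on a common Zariski-dense set of parameter choices --- which is done exactly as in the proof of Theorem~\ref{algoritmopuntosencomponentescerradas} --- together with confirming that the single type~$2$ deformation of Section~\ref{un_polinomio_n_variables} captures the extremal points of all three of $\{f<0\}$, $\{f=0\}$ and $\{f>0\}$, which is precisely what Lemma~\ref{comphiperconigual} provides.
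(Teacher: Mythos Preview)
Your proposal is correct and follows essentially the same approach as the paper, which simply states that the theorem follows from Lemma~\ref{comphiperconigual}, Proposition~\ref{idea_algoritmo}, and the proof of Theorem~\ref{algoritmopuntosencomponentescerradas}. You have correctly identified the two simplifications specific to the single-polynomial case (only $S=\{1\}$ occurs, and a single type~$2$ deformation with $\tau=``+"$ handles all three sign regions via Lemma~\ref{comphiperconigual}), and your complexity derivation is accurate; the only minor omission is that the linear change of variables at step~$1$ adds $O(n^2)$ to the slp length, but since $n^2\le n^{\Omega-1}$ this is absorbed in the stated bound.
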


\bibliographystyle{abbrv}

\end{document}